\documentclass[final]{siamltex}
\usepackage{latexsym,amsfonts,amsmath,graphics}
\usepackage{epsfig}

\newtheorem{remark}{Remark}

\newcommand{\bsk}{\boldsymbol{k}}
\newcommand{\bsl}{\boldsymbol{l}}

\newcommand{\bsx}{\boldsymbol{x}}
\newcommand{\bsh}{\boldsymbol{h}}
\newcommand{\bsg}{\boldsymbol{g}}
\newcommand{\bsz}{\boldsymbol{z}}
\newcommand{\bsy}{\boldsymbol{y}}
\newcommand{\bssigma}{\boldsymbol{\sigma}}
\newcommand{\D}{{\cal D}}

\newcommand{\wal}{{\rm wal}}

\newcommand{\de}{{\rm e}}
\newcommand{\landau}{{\cal O}}
\newcommand{\icomp}{\mathtt{i}}
\newcommand{\bszero}{\boldsymbol{0}}
\newcommand{\rd}{\,\mathrm{d}}
\newcommand{\HH}{\mathcal{H}}
\newcommand{\NN}{\mathbb{N}}
\newcommand{\integer}{\mathbb{Z}}
\newcommand{\RR}{\mathbb{R}}

\newcommand{\LL}{\mathcal{L}}
\renewcommand{\pmod}[1]{\,(\bmod\,#1)}
\newcommand{\EE}{\mathbb{E}}

\makeatletter
\newcommand{\rdots}{\mathinner{\mkern1mu\lower-1\p@\vbox{\kern7\p@\hbox{.}}
\mkern2mu \raise4\p@\hbox{.}\mkern2mu\raise7\p@\hbox{.}\mkern1mu}}
\makeatother

\begin{document}

\title{Explicit constructions of quasi-Monte Carlo rules  for the numerical integration of high dimensional  periodic functions}

\author{Josef Dick\thanks{School of Mathematics and Statistics, UNSW, Sydney, 2052, Australia. ({\tt
josi@maths.unsw.edu.au})}}

\date{}
\maketitle

\begin{abstract}
In this paper we give explicit constructions of point sets in the
$s$ dimensional unit cube yielding quasi-Monte Carlo algorithms
which achieve the optimal rate of convergence of the worst-case
error for numerically integrating high dimensional periodic
functions. In the classical measure $P_{\alpha}$ of the worst-case
error introduced by Korobov the convergence is of
$\landau(N^{-\min(\alpha,d)} (\log N)^{s\alpha-2})$ for every even
integer $\alpha \ge 1$, where $d$ is a parameter of the construction
which can be chosen arbitrarily large and $N$ is the number of
quadrature points. This convergence rate is known to be best
possible up to some $\log N$ factors. We prove the result for the
deterministic and also a randomized setting. The construction is
based on a suitable extension of digital $(t,m,s)$-nets over the
finite field $\integer_b$.
\end{abstract}

\begin{keywords}
Numerical integration, quasi-Monte Carlo method, digital net, digital sequence, lattice rule
\end{keywords}

\begin{AMS}
primary: 11K38, 11K45, 65C05; secondary: 65D30, 65D32;
\end{AMS}

\pagestyle{myheadings}
\thispagestyle{plain}
\markboth{Josef DICK}{Explicit constructions of quasi-Monte Carlo rules for periodic integrands}

\section{Introduction}

Korobov~\cite{kor} and independently Hlawka~\cite{hlawka} introduced
a quadrature formula which is suited for numerically integrating
high dimensional periodic functions. More precisely, we want to
approximate the high dimensional integral $\int_{[0,1]^s} f(\bsx)\rd
\bsx$ (where $f$ is assumed to be periodic with period $1$ in each
coordinate) by a quasi-Monte Carlo rule, i.e., an equal weight
quadrature rule $Q_{N,s}(f) = N^{-1} \sum_{n=0}^{N-1} f(\bsx_n)$,
where $\bsx_0,\ldots, \bsx_{N-1} \in [0,1]^s$ are the quadrature
points. Specifically, Korobov and Hlawka suggested using a
quadrature rule of the form $Q_{N,\bsg,s}(f) =
N^{-1}\sum_{n=0}^{N-1} f(\{n\bsg/N\})$, where for a vector of real
numbers $\bsx = (x_1,\ldots, x_s)$ we define $\{\bsx\}$ as the
fractional part of each component of $\bsx$, i.e., $\{x_j\} = x_j -
\lfloor x_j \rfloor = x_j \pmod{1}$ and where $\bsg \in \integer^s$
is an integer vector. The quadrature rule $Q_{N,\bsg,s}$ is called
{\it lattice rule} and $\bsg$ is called the generating vector (of
the lattice rule). The monographs \cite{HuaW, korbook, niesiam,
SJ94} deal partly or entirely with the approximation of such
integrals. (Note that the assumption that the integrand $f$ is
periodic is not really a restriction since there are transformations
which transform non-periodic functions into periodic ones such that
the smoothness of the integrand is preserved, see for example
\cite{SJ94}.)

To analyze the properties of a quadrature rule one considers then
the worst-case error $\sup_{f \in B_\HH} |\int_{[0,1]^s}
f(\bsx)\rd\bsx - Q_{N,s}(f)|$, where $B_\HH$ denotes some class of
functions. In the classical theory the class $\varepsilon_\alpha^s$
of periodic functions has been considered where one demands that the
absolute values of the Fourier coefficients of the function decay
sufficiently fast (see \cite{HuaW, korbook, SJ94, niesiam}). This
leads us to the classical measure of the quality of lattice rules
$P_\alpha = \sup_{f\in\varepsilon^s_{\alpha}} \left|\int_{[0,1]^s}
f(\bsx)\rd\bsx - Q_{N,s}(f)\right|$, which then for a lattice rule
with generating vector $\bsg = (g_1,\ldots, g_s)$ can also be
written as $$P_{\alpha} = P_{\alpha}(\bsg,N) =
\sum_{\bsh\in\integer^s\setminus\{\bszero\}\atop \bsh \cdot\bsg
\equiv 0 \pmod{N}} |\bar{\bsh}|^{-\alpha},$$ where $\bsh =
(h_1,\ldots, h_s)$, $\bsh \cdot \bsg = h_1 g_1 + \cdots + h_s g_s$
and $|\bar{\bsh}| = \prod_{j=1}^s \max(1,|h_j|)$. (Later on in this
paper we prefer to use the more contemporary notation of reproducing
kernel Hilbert spaces, in our case so-called Korobov spaces, (see
Section~\ref{sect_korspace}), but as is well understood (and as is
also shown in Section~\ref{sect_korspace}) the results also apply to
the classical problem.)

By averaging over all generating vectors $\bsg$ several existence
results for good lattice rules which achieve $P_{\alpha} =
\landau(N^{-\alpha}(\log N)^{\alpha s})$ have been shown, see
\cite{HuaW, kor, korbook, nielat, niesiam, SJ94}. By a lower bound
of Sharygin~\cite{shar} this convergence is also known to be
essentially best possible, as he showed that the worst-case error is
at least of order $N^{-\alpha} (\log N)^{s-1}$. But, except for
dimension $s = 2$, no explicit generating vectors $\bsg$ which yield
a small worst-case error are known. For $s \ge 3$ one relies on
computer search to find good generating vectors $\bsg$ and many such
search algorithms have been introduced and analysed, especially
recently, see \cite{kor, SKJ, SKJ2, WSD}.

On the other hand one can of course also use some other quadrature
rule $Q_{N,s}(f) = \sum_{n=0}^{N-1} \omega_n f(\bsx_n)$ to
numerically integrate functions in the class
$\varepsilon^s_{\alpha}$. In this case the worst-case error in the
class $\varepsilon^s_{\alpha}$ for a quadrature rule with weights
$\omega_0,\ldots, \omega_{N-1}$ and points $\{\bsx_0,\ldots,
\bsx_{N-1}\}\subset [0,1)^s$ is given by
\begin{equation}\label{eq_Palphagen}
P_{\alpha}(\{\bsx_0,\ldots, \bsx_{N-1}\}) =  \sum_{n,m=0}^{N-1}
\omega_n \omega_m  \sum_{\bsh \in \integer^s\setminus\{\bszero\}}
\frac{\de^{2\pi \icomp \bsh \cdot
(\bsx_n-\bsx_m)}}{|\bar{\bsh}|^{\alpha}}.
\end{equation}

An explicit construction of such point sets was introduced by
Niederreiter, see \cite[Theorem~5.3]{nie78}, and is called Kronecker
sequence. Here the idea is to choose the quadrature points of the
form $\{\bsz k\}$, $k = 1, 2, \ldots$, where $\bsz$ is an
$s$-dimensional vector of certain irrational numbers (for example
one can choose $\bsz = (\sqrt{p_1},\ldots, \sqrt{p_s})$ where $p_1,
\ldots, p_s$ are distinct prime numbers. Depending on the smoothness
$\alpha$ certain points will be used more than once, see
\cite{nie78}. In practice, problems can occur because of the finite
precision of computers making it impossible to use points whose
coordinates are all irrational numbers.

Another construction of quadrature rules is due to
Smolyak~\cite{smo} and is nowadays called sparse grid, see also
\cite{gerst}. Those quadrature rules are sums over certain products
of differences of one-dimensional quadrature rules. In principle any
one-dimensional quadrature rule can be chosen as a basis, leading to
different quadrature rules. In many cases the weights $\omega_n$ of
such quadrature rules  are not known explicitly but can be
precomputed. But even if the underlying one-dimensional quadrature
rule has only positive weights, it is possible that some weights in
Smolyak's quadrature rules are negative, which can have a negative
impact on the stability of the quadrature formula. In general,
quadrature formulae for which all weights are equal and
$\sum_{n=0}^{N-1} \omega_n = 1$, that is, $\omega_n = N^{-1}$ for
all $n = 0,\ldots, N-1$, are to be preferred. As mentioned above,
such quadrature rules are called quasi-Monte Carlo rules, to which
we now switch for the remainder of the paper.

As the weights for quasi-Monte Carlo rules are given by $N^{-1}$ the
focus lies on the choice  of the quadrature points. Constructions of
quadrature points have been introduced with the aim to distribute
the points as evenly as possible over the unit cube. An explicit
construction of well distributed point sets in the unit cube has
been introduced by Sobol~\cite{sob67}. A similar construction was
established by Faure~\cite{faure} before Niederreiter~\cite{nie87}
(see also \cite{niesiam}) introduced the general concept of
$(t,m,s)$-nets and $(t,s)$-sequences and the construction scheme of
digital $(t,m,s)$-nets and digital $(t,s)$-sequences. For such point
sets it has been shown that the star discrepancy (which is a measure
of the distribution properties of a point set) is
$\landau(N^{-1}(\log N)^{s-1})$, see \cite{niesiam}. From this
result it follows that those point sets yield quasi-Monte Carlo
algorithms which achieve a convergence of $\landau(N^{-2}(\log
N)^{2s-2})$ for functions in the class $\varepsilon^s_{\alpha}$ for
all $\alpha \ge 2$. This result holds in the deterministic and
randomized setting.

For smoother functions though, i.e., larger values of $\alpha$ in
the class $\varepsilon_{\alpha}^s$, one can expect higher order
convergence. For example, if the partial derivatives up to order two
are square integrable then one would expect an integration error of
$\landau(N^{-4} (\log N)^{c(s)})$, for some $c(s) > 0$ depending
only on $s$, in the function class $\varepsilon_{\alpha}^s$, and in
general, if the mixed partial derivatives up to order $\alpha/2$
exist and are square integrable then one would expect an integration
error in $\varepsilon_{\alpha}^s$ of $\landau(N^{-\alpha}(\log
N)^{c(s,\alpha)})$, for some $c(s,\alpha) > 0$ depending only on $s$
and $\alpha$. But until now $(t,m,s)$-nets and $(t,s)$-sequences
have only been shown to yield a convergence of at best
$\landau(N^{-2}(\log N)^{2s-2})$ (or $\landau(N^{-3+\delta})$ for
any $\delta >0$ if one uses a randomization method called
scrambling, see \cite{owen}) in $\varepsilon_{\alpha}^s$, even if
the integrands satisfy stronger smoothness assumptions.

In this paper we show that a modification of digital $(t,m,s)$-nets
and digital $(t,s)$-sequences introduced by
Niederreiter~\cite{nie87, niesiam} yields point sets which achieve
the optimal rate of convergence of the worst-case error $P_{2\alpha}
= \landau(N^{-2\min(\alpha,d)}(\log N)^{2s\alpha -2})$ for any
integer $\alpha \ge 1$ and where $d \in \NN$ is a parameter of the
construction which can be chosen arbitrarily large. We too use the
digital construction scheme introduced by Niederreiter~\cite{nie87,
niesiam} for the construction of $(t,m,s)$-nets and
$(t,s)$-sequences, but our analysis of the worst-case error shows
that the $t$-value does not provide enough information about the
point set. Hence we generalize the definition of digital
$(t,m,s)$-nets and digital $(t,s)$-sequences to suit our needs. This
leads us to the definition of digital $(t,\alpha,\beta,m,s)$-nets
and digital $(t,\alpha,\beta,s)$-sequences. For $\alpha = \beta = 1$
those definitions reduce to the case introduced by Niederreiter, but
are different for $\alpha > 1$. Subsequently we prove that
quasi-Monte Carlo rules based on digital $(t,\alpha,\beta,m,s)$-nets
and digital $(t,\alpha,\beta,s)$-sequences achieve the optimal rate
of convergence. Further we give explicit constructions of digital
$(t,\alpha,\min(\alpha,d),m,s)$-nets and digital $(t,\alpha,
\min(\alpha,d),s)$-sequences, where $d \in \NN$ is a parameter of
the construction which can be chosen arbitrarily large.

Digital $(t,2,2,m,s)$-nets and digital $(t,2,2,s)$-sequences over
$\integer_b$ (i.e. where $\alpha = \beta = 2$) can also be used for
non-periodic function spaces where one uses randomly shifted and
then folded point sets using the baker's transformation (see
\cite{CDLP}). Our analysis and error bounds for $\alpha = 2$  here
also apply for the case considered in \cite{CDLP} (with different
constants though), hence yielding useful constructions also for
non-periodic function spaces where one uses the baker's
transformation. Using a digital $(t,\alpha,m,s)$-net with a
scrambling algorithm (see \cite{owen}) on the other hand does not
improve the performance in non-periodic spaces compared to
$(t,m,s)$-nets.

In the following we summarize some properties of the quadrature rules:
\begin{itemize}
\item The quadrature rules introduced in this paper are equal weight quadrature rules which achieve the optimal rate of convergence up to some $\log N$ factors and we show the result for deterministic and randomly digitally shifted quadrature rules. The upper bound for the randomized quadrature rules even improves upon the best known upper bound (more precisely, the power of the $\log N$ factor) for lattice rules for the worst-case error in $\varepsilon_\alpha^s$ for all dimensions $s \ge 2$ and even integers $\alpha \ge 2$ (compare Corollary~\ref{cor_bounderand} to Theorem~2 in \cite{nielat}).
\item The construction of the underlying point set is explicit.
\item They automatically adjust themselves to the optimal rate of convergence in the class $\varepsilon_{2\alpha}^s$ as long as $\alpha$ is an integer such that $\alpha \le d$, where $d$ is a parameter of the construction which can be chosen arbitrarily large.
\item The underlying point set is extensible in the dimension as well as in the number of points, i.e., one can always add some coordinates or points to an existing point set such that the quality of the point set is preserved.
\item Tractability and strong tractability results (see \cite{SW98}) can be obtained for weighted Korobov spaces.
\end{itemize}

The outline of the paper is as follows. In the next section we
introduce the necessary tools, namely Walsh functions, the digital
construction scheme upon which the construction of the point set is
based on and Korobov spaces. Further we also introduce the
worst-case error in those Korobov spaces and we give a
representation of this worst-case error for digital nets in terms of
the Walsh coefficients of the reproducing kernel. In
Section~\ref{sec_talpha} we give the definition of digital
$(t,\alpha,\beta,m,s)$-nets and digital
$(t,\alpha,\beta,s)$-sequences. Further we prove some propagation
rules for those digital nets and sequences. In
Section~\ref{sec_talphacons} we give explicit constructions of
digital $(t,\alpha,\beta,m,s)$-nets and digital
$(t,\alpha,\beta,s)$-sequences and we prove some upper bounds on the
$t$-value. We then show, Section~\ref{sec_bound}, that quasi-Monte
Carlo rules based on those digital nets and sequences achieve the
optimal rate of convergence of the worst-case error in the Korobov
spaces. The results are based on entirely deterministic point sets.
Section~\ref{sec_boundrand} finally deals with randomly digitally
shifted digital $(t,\alpha,\beta,m,s)$-nets and
$(t,\alpha,\beta,s)$-sequences and we show similar results for the
mean square worst-case error in the Korobov space for this setting.
The Appendix is devoted to the analysis of the Walsh coefficients of
the Walsh series representation of $B_{2\alpha}(|x-y|)$, where
$B_{2\alpha}$ is the Bernoulli polynomial of  degree $2\alpha$. In
the last section we give a concrete example of a digital
$(t,\alpha,\alpha,m,s)$-net where we compute the $t$-value by hand.

\section{Preliminaries}\label{sect_pre}

In this section we introduce the necessary tools for the analysis of
the worst-case error  and the construction of the point sets. In the
following let $\NN$ denote the set of natural numbers and let
$\NN_0$ denote the set of non-negative integers.

\subsection{Walsh functions}

In the following we define Walsh functions in base $b\ge 2$, which are the main tool of analyzing the worst-case error. First we give the definition for the one-dimensional case.

\begin{definition}\rm
Let $b \ge 2$ be an integer and represent $k \in \NN_0$ in base $b$, $k = \kappa_{a-1} b^{a-1} + \cdots + \kappa_0$ with $\kappa_i \in \{0,\ldots, b-1\}$. Further let $\omega_b = \de^{2\pi\icomp/b}$. Then the $k$-th Walsh function $_b\wal_k: [0,1)\rightarrow \{1,\omega_b,\ldots, \omega_b^{b-1}\}$ in base $b$ is given by $$_b\wal_k(x) = \omega_b^{x_1 \kappa_0 + \cdots + x_a \kappa_{a-1}},$$ for $x \in [0,1)$ with base $b$ representation $x = x_1 b^{-1} + x_2 b^{-2} + \cdots$ (unique in the sense that infinitely many of the $x_i$ are different from $b-1$).
\end{definition}

\begin{definition}\rm
For dimension $s \ge 2$, $\bsx = (x_1,\ldots, x_s) \in [0,1)^s$ and $\bsk = (k_1,\ldots, k_s) \in \NN_0^s$ we define $_b\wal_{\bsk}:[0,1)^s \rightarrow \{1,\omega_b,\ldots, \omega_b^{b-1}\}$ by $$_b\wal_{\bsk}(\bsx) = \prod_{j=1}^s\; _b\wal_{k_j}(x_j).$$
\end{definition}
As we will always use Walsh functions in base $b$ we will in the following often write $\wal$ instead of $_b\wal$.

We introduce some notation. By $\oplus$ we denote the digit-wise addition modulo $b$, i.e., for $x = \sum_{i=w}^\infty x_i b^{-i}$ and $y = \sum_{i=w}^\infty y_i b^{-i}$ we define $$x \oplus y = \sum_{i = w}^\infty z_i b^{-i},$$ where $z_i \in \{0,\ldots, b-1\}$ is given by $z_i \equiv x_i + y_i \pmod{b}$ and let $\ominus$ denote the digit-wise subtraction modulo $b$. In the same manner we also  define a digit-wise addition and digit-wise subtraction for non-negative integers based on the $b$-adic expansion. For vectors in $[0,1)^s$ or $\NN_0^s$ the operations $\oplus$ and $\ominus$ are carried out component-wise. Throughout the paper we always use base $b$ for the operations $\oplus$ and $\ominus$. Further we call $x\in [0,1)$ a $b$-adic rational if it can be written in a finite base $b$ expansion.

In the following proposition we summarize some basic properties of Walsh functions.

\begin{proposition}\label{prop1}
\begin{enumerate}
\item For all $k,l \in \NN_0$ and all $x,y \in [0,1)$, with the restriction that if $x, y$ are not $b$-adic rationals then $x \oplus y$ is not allowed to be a $b$-adic rational,  we have $$\wal_k(x) \cdot \wal_l(x) = \wal_{k\oplus l}(x), \quad \wal_k(x)\cdot\wal_k(y) = \wal_k(x\oplus y).$$
\item We have $$\int_0^1 \wal_0(x) \rd x = 1 \quad \mbox{and} \quad \int_0^1 \wal_k(x) = 0 \mbox{ if } k >0.$$
\item For all $\bsk,\bsl \in \NN_0^s$ we have the following orthogonality properties: $$\int_{[0,1)^s} \wal_{\bsk}(\bsx) \wal_{\bsl}(\bsx)\rd\bsx = \left\{\begin{array}{ll} 1, & \mbox{if } \bsk = \bsl, \\ 0, & \mbox{otherwise}.   \end{array} \right.$$
\item For any $f \in \LL_2([0,1)^s)$ and any $\bssigma \in [0,1)^s$ we have $$\int_{[0,1)^s} f(\bsx \oplus \bssigma)\rd\bsx = \int_{[0,1)^s} f(\bsx)\rd\bsx.$$
\item For any integer $s \ge 1$ the system $\{\wal_{\bsk}: \bsk = (k_1,\ldots, k_s), k_1,\ldots, k_s \ge 0\}$ is a complete orthonormal system in $\LL_2([0,1)^s)$.
\end{enumerate}
\end{proposition}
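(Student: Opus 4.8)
The plan is to prove the five items in the order 1, 2, 3, 5, 4, since each rests on the earlier ones, and to treat item~5 (completeness) as the substantial step; items~1--4 are short computations with base-$b$ digits. The common engine is item~1, so I would establish it first.

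For item~1, expand $k,l$ in base $b$ with digits $\kappa_i,\lambda_i$ and $x$ in base $b$ with digits $x_i$. Directly from the definition, $\wal_k(x)\wal_l(x)=\omega_b^{\sum_i x_{i+1}(\kappa_i+\lambda_i)}$; because $\omega_b^b=1$ the exponents may be reduced modulo $b$, which is precisely the digit-wise sum $k\oplus l$. The second identity is the same computation with the roles of the two digit sequences interchanged, $\wal_k(x)\wal_k(y)=\omega_b^{\sum_i(x_{i+1}+y_{i+1})\kappa_i}=\wal_k(x\oplus y)$. The restriction on $b$-adic rationals only serves to fix the two-fold ambiguity of base-$b$ expansions; the exceptional pairs form a null set, so they never affect the integral identities below.

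For item~2, $\wal_0\equiv 1$ gives the first equality, and for $k>0$ with $a$ digits I would note that $\wal_k$ is constant on each of the $b^a$ intervals $[cb^{-a},(c+1)b^{-a})$, so the integral factors as $b^{-a}\prod_{i=1}^{a}\sum_{x_i=0}^{b-1}\omega_b^{x_i\kappa_{i-1}}$; each inner sum equals $b$ if $\kappa_{i-1}=0$ and $0$ otherwise, and since some digit of $k$ is nonzero the product vanishes. Item~3 then follows by reducing the product of two Walsh functions to a single one via item~1: for $b>2$ the functions are complex, so the integral is to be read with $\overline{\wal_{\bsl}}$ (automatic when $b=2$), and since $\overline{\wal_{\bsl}}=\wal_{\bszero\ominus\bsl}$ the integrand is $\wal_{\bsk\ominus\bsl}(\bsx)$ for almost every $\bsx$; applying the $s$-fold version of item~2 componentwise gives $1$ exactly when $\bsk\ominus\bsl=\bszero$, i.e.\ $\bsk=\bsl$, and $0$ otherwise.

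Item~5 is where the real work lies. Orthonormality is item~3, so only completeness in $\LL_2([0,1))$ needs an argument. Fixing $m$, let $V_m$ be the $b^m$-dimensional space of functions constant on every interval $[cb^{-m},(c+1)b^{-m})$. Each $\wal_k$ with $0\le k<b^m$ depends only on the first $m$ digits of $x$ and hence lies in $V_m$; these are $b^m$ orthonormal, thus linearly independent, functions, so they form a basis of $V_m$. Therefore the union over $m$ of the spans of $\{\wal_k:0\le k<b^m\}$ is exactly the set of $b$-adic step functions, which is dense in $\LL_2([0,1))$, giving completeness in dimension one; tensoring the one-dimensional complete orthonormal system with itself $s$ times yields the complete orthonormal system $\{\wal_{\bsk}\}$ on $[0,1)^s$. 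The nonroutine points, and the place I expect difficulty, are the identification $\mathrm{span}\{\wal_k:0\le k<b^m\}=V_m$ (that the Walsh functions already exhaust the step functions of resolution $m$, which is really an invertibility/character-table statement for the group of digits) and the passage to dimension $s$ by tensor products. Finally, item~4 follows either from item~1, via $\int\wal_{\bsk}(\bsx\oplus\bssigma)\rd\bsx=\wal_{\bsk}(\bssigma)\int\wal_{\bsk}(\bsx)\rd\bsx$ together with item~2 and density from item~5, or directly by observing that $\bsx\mapsto\bsx\oplus\bssigma$ acts digit-wise as a bijection of $\{0,\dots,b-1\}$ in each position and therefore preserves Lebesgue measure on $[0,1)^s$ up to a null set.
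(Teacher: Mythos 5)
Your proof is correct, but there is essentially nothing in the paper to compare it against: the paper does not prove Proposition~1 at all --- it declares items 1--3 straightforward and refers to the classical papers of Chrestenson and Walsh for items 4 and 5. What you have written is the self-contained argument that the paper outsources, and it is sound. Three remarks. First, your reading of item~3 is the right one: as literally printed (without a conjugate) the orthogonality relation fails for $b>2$, since by item~1 one has $\int_0^1 \wal_k(x)\wal_l(x)\rd x = 1$ exactly when $l=\ominus k$ rather than $l=k$; the paper itself always uses the Hermitian form $\int \wal_{\bsk}\overline{\wal_{\bsl}}$ (for instance in the definition of $r_{b,\alpha}(\bsk,\bsl)$ and in the Walsh expansion of $K_\alpha$), so your insertion of the conjugate is consistent with how the proposition is actually applied. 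Second, in item~4 your route (a) --- expand in Walsh polynomials, shift, and pass to the limit --- is mildly circular, because controlling the error term after the shift requires knowing that $f\mapsto f(\cdot\oplus\bssigma)$ is an $\LL_2$ (or $\LL_1$) isometry, which is precisely the measure-preservation being proved; your route (b), that for each $m$ the map $\bsx\mapsto\bsx\oplus\bssigma$ permutes the $b$-adic boxes of side $b^{-m}$ up to a null set and hence preserves Lebesgue measure, is self-contained and is the one to keep (it also makes item~4 independent of item~5, so your reordering of the items becomes unnecessary). Third, the completeness argument in item~5 --- the $b^m$ orthonormal functions $\wal_k$, $0\le k<b^m$, lie in the $b^m$-dimensional space $V_m$ of step functions of resolution $b^{-m}$, hence span it, and $\bigcup_m V_m$ is dense in $\LL_2([0,1))$, followed by the tensor-product step for dimension $s$ --- is exactly the standard proof found in the cited references, so your self-contained treatment agrees in substance with what the paper points to.
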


The proofs of 1.-3. are straightforward and for a proof of the remaining items see \cite{chrest} or \cite{walsh} for more information.

\subsection{The digital construction scheme}

The construction of the point set used here is based on the digital
construction scheme introduced by Niederreiter, see \cite{niesiam}.

\begin{definition}\rm\label{def_digcons}
Let integers $m,s\ge 1$ and $b \ge 2$ be given. Let $R_b$ be a
commutative ring with identity such that  $|R_b| = b$ and let
$\integer_b = \{0,\ldots, b-1\}$. Let $C_1,\ldots, C_s \in R_b^{m
\times m}$ with $C_j = (c_{j,k,l})_{1\le k,l\le m}$. Further, let
$\psi_l:\integer_b \rightarrow R_b$ for $l = 0,\ldots, m-1$ and
$\mu_{j,k}: R_b \rightarrow \integer_b$ for $j = 1,\ldots, s$ and $k
= 1,\ldots, m$ be bijections.

For $n = 0,\ldots, b^m-1$ let $n = \sum_{l=0}^{m-1}a_l(n) b^l$, with
all $a_l(n) \in \integer_b$, be the base $b$ digit expansion of $n$.
Let $\vec{n} = (\psi_0(a_0(n)),\ldots, \psi_{m-1}(a_{m-1}(n)))^T$
and let $\vec{y}_j = (y_{j,1},\ldots, y_{j,m})^T = C_j \vec{n}$ for
$j = 1,\ldots, s$. Then we define $x_{j,n} = \mu_{j,1}(y_{j,1})
b^{-1} + \cdots + \mu_{j,m}(y_{j,m}) b^{-m}$ for $j = 1,\ldots, s$
and $n = 0,\ldots, b^m-1$ and the $n$-th point $\bsx_n$ is then
given by $\bsx_n = (x_{1,n},\ldots, x_{s,n})$. The point set
$\{\bsx_0,\ldots, \bsx_{b^m-1}\}$ is called a digital net (over
$R_b$) (with generating matrices $C_1,\ldots, C_s$).

For $m = \infty$ we obtain a sequence $\{\bsx_0,\bsx_1,\ldots\}$,
which is called a digital sequence (over $R_b$) (with generating
matrices $C_1,\ldots, C_s$).
\end{definition}

Niederreiter's concept of a digital $(t,m,s)$-net and a digital $(t,s)$-sequence will appear as a special case in Section~\ref{sec_talpha}. Apart from Section~\ref{sec_talpha} and Section~\ref{sec_talphacons}, where we state the results using Definition~\ref{def_digcons} in the general form, we use only a special case of Definiton~\ref{def_digcons}, where we assume that $b$ is a prime number, we choose $R_b$ the finite field $\integer_b$ and the bijections $\psi_l$ and $\mu_{j,k}$ from $\integer_b$ to $\integer_b$ are all chosen to be the identity map.

We remark that throughout the paper when Walsh functions $\wal$, digit-wise addition $\oplus$, digit-wise subtraction $\ominus$ or digital nets are used in conjunction with each other we always use the same base $b$ for each of those operations.

\subsection{Korobov space}\label{sect_korspace}

Historically the function class $\varepsilon^s_{\alpha}$ has been used. In this paper we use a more contemporary notation by replacing the function class $\varepsilon^s_{\alpha}$ with a reproducing kernel Hilbert space $\HH_\alpha$ called Korobov space. The worst-case error expression (\ref{eq_Palphagen}) will almost be the same for both function classes and hence the results apply for both cases.

A reproducing kernel Hilbert space $\HH$ over $[0,1)^s$ is a Hilbert space with inner product $\langle \cdot, \cdot \rangle$ which allows a function $K: [0,1)^s \rightarrow \RR$ such that $K(\cdot,\bsy) \in \HH$, $K(\bsx,\bsy) = K(\bsy,\bsx)$ and $\langle f,K(\cdot,\bsy)\rangle = f(\bsy)$ for all $\bsx, \bsy \in [0,1)^s$ and all $f \in \HH$. For more information on reproducing kernel Hilbert spaces see \cite{aron}, for more information on reproducing kernel Hilbert spaces in the context of numerical integration see for example \cite{DP,SW98}.

The Korobov space $\HH_\alpha$ is a reproducing kernel Hilbert space of periodic functions. Its reproducing kernel is given by
\begin{equation*}
K_{\alpha}(\bsx,\bsy) = \sum_{\bsh \in \integer^s} \frac{\de^{2 \pi \icomp \bsh \cdot (\bsx - \bsy)}}{|\bar{\bsh}|^{2\alpha}},
\end{equation*}
where $\alpha > 1/2$ and $|\bar{\bsh}| = \prod_{j=1}^s \max(1,|h_j|)$. The inner product in the space $\HH_\alpha$ is given by
\begin{equation}\label{eq_inprod}
\langle f,g\rangle_\alpha = \sum_{\bsh \in \integer^s} |\bar{\bsh}|^{2\alpha} \hat{f}(\bsh) \hat{g}(\bsh),
\end{equation}
 where $$\hat{f}(\bsh) = \int_{[0,1)^s} f(\bsx)\de^{-2\pi\icomp \bsh\cdot \bsx}\rd\bsx$$ are the Fourier coefficients of $f$. The norm is given by $\|f\|_\alpha = \langle f, f\rangle_\alpha^{1/2}$.

Note that for $\alpha$ a  natural number and any $x \in (0,1)$ we have
$$B_{2\alpha}(x) = \frac{(-1)^{\alpha + 1}(2\alpha)!}{(2\pi)^{2\alpha}}\sum_{h \neq 0} \frac{\de^{2 \pi \icomp h x}}{|h|^{2\alpha}},$$
where $B_{2\alpha}$ is the Bernoulli polynomial of degree $2\alpha$. Hence, for $\alpha$ a natural number we can write
\begin{equation*}
K_{\alpha}(\bsx,\bsy) = \prod_{j=1}^s \left(1 + \sum_{h \neq 0} \frac{\de^{2\pi \icomp h (x_j - y_j)}}{|h|^{2\alpha}} \right)
= \prod_{j=1}^s \left(1 - (-1)^{\alpha} \frac{(2\pi)^{2\alpha}}{(2\alpha)!} B_{2\alpha}(|x_j - y_j|) \right).
\end{equation*}
Let now
\begin{equation}\label{Kdef}
K_{\alpha}(x,y) = 1 + \sum_{h \neq 0} \frac{\de^{2\pi \icomp h (x-y)}}{|h|^{2\alpha}} = 1-(-1)^\alpha \frac{(2\pi)^{2\alpha}}{(2\alpha)!}B_{2\alpha}(|x-y|).
\end{equation}
Then we have $$K_{\alpha}(\bsx,\bsy) = \prod_{j=1}^s K_{\alpha}(x_j,y_j),$$ where $\bsx = (x_1,\ldots, x_s)$ and $\bsy = (y_1,\ldots, y_s)$. Hence the Korobov space is a tensor product of one-dimensional reproducing kernel Hilbert spaces.

Though $\alpha > 1/2$ can in general be any real number we restrict ourselves to integers $\alpha \ge 1$ for most of this paper. The bounds on the integration error for $\HH_\alpha$ with $\alpha \ge 1$ a real number still apply when one replaces $\alpha$ with $\lfloor \alpha \rfloor$, as in this case the unit ball of $\HH_\alpha$ given by $\{f \in \HH_\alpha: \|f\|_\alpha \le 1\}$ is contained in the unit ball $\{f \in \HH_{\lfloor \alpha \rfloor}: \|f\|_{\lfloor \alpha \rfloor} \le 1\}$ of $\HH_{\lfloor \alpha \rfloor}$ as $\|f\|_{\lfloor \alpha \rfloor} \le \|f\|_{\alpha}$. Hence it follows that integration in the space $\HH_\alpha$ is easier than integration in the space $\HH_{\lfloor \alpha\rfloor}$.

In general, the worst-case error $e(P,\HH)$ for multivariate integration in a normed space $\HH$ over $[0,1]^s$ with norm $\|\cdot\|$ using a point set $P$ is given by $$e(P,\HH) = \sup_{f \in \HH, \|f\| \le 1} \left|\int_{[0,1]^s} f(\bsx)\rd\bsx - Q_{P}(f)\right|,$$ where $Q_P(f) = N^{-1} \sum_{\bsx \in P}f(\bsx)$ and $N = |P|$ is the number of points in $P$. If $\HH$ is a reproducing kernel Hilbert space with reproducing kernel $K$ we will write $e(P,K)$ instead of $e(P,\HH)$. It is known that (see for example \cite{SW98})
\begin{equation}\label{wcekernel}
e^2(P,K) = \int_{[0,1)^{2s}} K(\bsx,\bsy)\rd \bsx \rd \bsy - \frac{2}{N} \sum_{n=0}^{N-1} \int_{[0,1)^s} K(\bsx_n,\bsy) \rd \bsy + \frac{1}{N^2} \sum_{n,l=0}^{N-1} K(\bsx_n,\bsx_l),
\end{equation}
where $P = \{\bsx_0,\ldots, \bsx_{N-1}\}$. Hence for the Korobov space $\HH_\alpha$ we obtain
\begin{equation}\label{eq_wceker}
e^2(P,K_\alpha) = - 1 + \frac{1}{N^2} \sum_{n,h=0}^{N-1} K_\alpha(\bsx_n,\bsx_h).
\end{equation}
Therefore it follows that $e^2(P,K_\alpha) = P_{2\alpha}$ and hence
our results also apply to the classical setting introduced by
Korobov~\cite{kor}.

It follows from Proposition~\ref{prop1} that $K_\alpha$ can be represented by a Walsh series, i.e., let
\begin{equation}\label{eq_walshker}
K_\alpha(\bsx,\bsy) = \sum_{\bsk,\bsl \in \NN_0^s} r_{b,\alpha}(\bsk,\bsl) \wal_{\bsk}(\bsx)\overline{\wal_{\bsl}(\bsy)},
\end{equation}
 where $$r_{b,\alpha}(\bsk,\bsl) = \int_{[0,1)^{2s}} K_\alpha(\bsx,\bsy) \overline{\wal_{\bsk}(\bsx)} \wal_{\bsl}(\bsy) \rd \bsx \rd\bsy.$$ As the kernel $K_\alpha$ is a product of one-dimensional kernels it follows that $r_{b,\alpha}(\bsk,\bsl) = \prod_{j=1}^s r_{b,\alpha}(k_j,l_j)$, where $\bsk = (k_1,\ldots, k_s)$ and $\bsl = (l_1,\ldots, l_s)$ and $$r_{b,\alpha}(k,l) = \int_0^1\int_0^1 K_\alpha(x,y) \overline{\wal_k(x)} \wal_l(y)\rd x \rd y.$$

For a digital net with generating matrices $C_1,\ldots, C_s$ let $\D = \D(C_1,\ldots, C_s)$ be the dual net given by $$\D = \{\bsk \in \NN_0^s\setminus \{\bszero\}: C_1^T \vec{k}_1 + \cdots + C_s^T \vec{k}_s = \vec{0}\},$$ where for $\bsk = (k_1,\ldots, k_s)$ with $k_j = \kappa_{j,0} + \kappa_{j,1} b + \cdots$ we set $\vec{k}_j = (\kappa_{j,0},\ldots, \kappa_{j,m-1})^T$. Further, for $\emptyset \neq u \subseteq \{1,\ldots, s\}$ let $\D_u = \D((C_j)_{j\in u})$. We have the following theorem.

\begin{theorem}\label{th_wcer}
Let $C_1,\ldots, C_s \in \integer_b^{m \times m}$ be the generating matrices of a digital net $P_{b^m}$ and let $\D$ denote the dual net. Then for any $\alpha > 1/2$ the square worst-case error in $\HH_\alpha$ is given by $$e^2(P_{b^m},K_\alpha) = \sum_{\bsk,\bsl \in \D} r_{b,\alpha}(\bsk,\bsl).$$
\end{theorem}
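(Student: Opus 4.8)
The plan is to expand the worst-case error formula (\ref{eq_wceker}) using the Walsh series (\ref{eq_walshker}) of the kernel and then exploit the additive character structure of the digital net to collapse the point sums onto the dual net $\D$. Writing $N = b^m$, I would first substitute (\ref{eq_walshker}) into (\ref{eq_wceker}) and, after justifying that the Walsh series of $K_\alpha$ (whose coefficients are absolutely summable since $\alpha > 1/2$) may be interchanged with the finite sums over the $N^2$ pairs of points, arrive at
\begin{equation*}
e^2(P_{b^m},K_\alpha) = -1 + \sum_{\bsk,\bsl\in\NN_0^s} r_{b,\alpha}(\bsk,\bsl)\left(\frac{1}{N}\sum_{n=0}^{N-1}\wal_{\bsk}(\bsx_n)\right)\overline{\left(\frac{1}{N}\sum_{h=0}^{N-1}\wal_{\bsl}(\bsx_h)\right)}.
\end{equation*}

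The key step is then to evaluate the character sum $S(\bsk) := N^{-1}\sum_{n=0}^{N-1}\wal_{\bsk}(\bsx_n)$. Using the digital construction (Definition~\ref{def_digcons}) in the special case where $R_b = \integer_b$ is a field and the bijections are the identity, one checks that $\wal_{\bsk}(\bsx_n) = \omega_b^{(\sum_{j=1}^s C_j^T \vec{k}_j)\cdot \vec{n}}$, where $\vec{n}$ ranges over all of $\integer_b^m$ as $n$ ranges over $0,\ldots,N-1$. Summing this additive character over $\integer_b^m$ gives $S(\bsk) = 1$ precisely when $\sum_{j=1}^s C_j^T \vec{k}_j = \vec{0}$, i.e. when $\bsk \in \D \cup \{\bszero\}$, and $S(\bsk) = 0$ otherwise, since a nontrivial character sums to zero (here primality of $b$ guarantees that $\omega_b^{c}$ is a primitive $b$-th root of unity whenever $c\not\equiv 0 \pmod{b}$). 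As $S(\bsl)$ is real-valued, the conjugation is harmless, and the double sum collapses to $\sum_{\bsk,\bsl\in\D\cup\{\bszero\}} r_{b,\alpha}(\bsk,\bsl)$.

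It remains to peel off the terms in which $\bsk$ or $\bsl$ equals $\bszero$. Integrating the (product) kernel shows $\int_{[0,1)^s} K_\alpha(\bsx,\bsy)\rd\bsx = 1$ for every $\bsy$, since the nonzero modes integrate to zero; hence $r_{b,\alpha}(\bszero,\bszero) = 1$ and, by the orthogonality in Proposition~\ref{prop1}, $r_{b,\alpha}(\bszero,\bsl) = r_{b,\alpha}(\bsk,\bszero) = 0$ for all $\bsk,\bsl \neq \bszero$. Thus the $\bszero$-contributions reduce to the single term $r_{b,\alpha}(\bszero,\bszero) = 1$, which cancels the leading $-1$, leaving exactly $\sum_{\bsk,\bsl\in\D} r_{b,\alpha}(\bsk,\bsl)$ as claimed.

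I expect the main obstacle to lie in the two technical points underlying the first two paragraphs: one must verify that the Walsh expansion of $K_\alpha$ converges well enough (guaranteed by $\alpha > 1/2$) to legitimately swap the infinite double sum with the finite point sums, and one must set up the digital-net indexing with sufficient care that $\wal_{\bsk}(\bsx_n)$ genuinely becomes the additive character $\omega_b^{(\sum_j C_j^T\vec{k}_j)\cdot\vec{n}}$. Once these are in place, the reduction to the dual net and the cancellation of the constant term are routine.
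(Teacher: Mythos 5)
Your proof is correct and follows essentially the same route as the paper: substitute the Walsh expansion (\ref{eq_walshker}) into (\ref{eq_wceker}), collapse the point sums onto $\D\cup\{\bszero\}$ via the character-sum property of digital nets, and cancel the leading $-1$ against $r_{b,\alpha}(\bszero,\bszero)=1$ after showing that the mixed coefficients $r_{b,\alpha}(\bszero,\bsl)$ and $r_{b,\alpha}(\bsk,\bszero)$ vanish for nonzero indices. The only cosmetic difference is that you prove the character-sum identity $N^{-1}\sum_n \wal_{\bsk}(\bsx_n)\in\{0,1\}$ directly from the digital construction, whereas the paper simply cites \cite{DP} for it.
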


\begin{proof}
From (\ref{eq_wceker}) and (\ref{eq_walshker}) it follows that $$e^2(P_{b^m},K_\alpha) = - 1 + \sum_{\bsk,\bsl \in \NN_0^s} r_{b,\alpha}(\bsk,\bsl) \frac{1}{b^{2m}} \sum_{\bsx,\bsy \in P_{b^m}}  \wal_{\bsk}(\bsx) \overline{\wal_{\bsl}(\bsy)}.$$ In \cite{DP} it was shown that $$\frac{1}{b^{m}} \sum_{\bsx \in P_{b^m}} \wal_{\bsk}(\bsx) = \left\{\begin{array}{ll} 1 & \mbox{if } \bsk \in \D\cup \{\bszero\}, \\ 0 & \mbox{otherwise}. \end{array}\right.$$ Hence we have $$e^2(P_{b^m},K_\alpha) = -1 +  \sum_{\bsk,\bsl \in \D\cup \{\bszero\}} r_{b,\alpha}(\bsk,\bsl).$$

In the following we will show that $r_{b,\alpha}(\bszero,\bszero) = 1$ and $r_{b,\alpha}(\bszero,\bsk) = r_{b,\alpha}(\bsk,\bszero) = 0$ if $\bsk \neq \bszero$ from which the result then follows. Note that it is enough to show those identities for the one dimensional case. We have $\wal_0(x) = 1$ for all $x \in [0,1)$ and hence
\begin{eqnarray*}
r_{b,\alpha}(0,k) & =& \int_0^1\int_0^1 (1 + \sum_{h\in
\integer\setminus\{0\}} |h|^{-2\alpha} \de^{2\pi\icomp h (x-y)})
\wal_k(y) \rd x \rd y \\ &=& \int_0^1 \wal_k(y) \rd y +  \int_0^1
\sum_{h\in \integer\setminus\{0\}} |h|^{-2\alpha}\int_0^1
\de^{2\pi\icomp h x}\rd x \; \de^{-2\pi\icomp h y} \wal_k(y) \rd y \\
&=&  \int_0^1 \wal_k(y) \rd y.
\end{eqnarray*}
It now follows from Proposition~\ref{prop1} that $r_{b,\alpha}(0,0)
= 1$ and $r_{b,\alpha}(0,k) = 0$ for $k > 0$. The result for
$r_{b,\alpha}(k,0)$ can be obtained in the same manner. Hence the
result follows.
\end{proof}

In the following lemma we obtain a formula for the Walsh coefficients $r_{b,\alpha}$.
\begin{lemma}\label{lem_rgen}
Let $b \ge 2$ be an integer and let $\alpha > 1/2$ be a real number. The Walsh coefficients $r_{b,\alpha}(k,l)$ for $k,l \in\NN$  are given by  $$r_{b,\alpha}(k,l) = \sum_{h \in \integer\setminus\{0\}} \frac{\overline{\beta_{h,k}} \beta_{h,l}}{|h|^{2\alpha}},$$ where $\beta_{h,k} = \int_0^1 \de^{-2\pi\icomp h x} \wal_k(x) \rd x$.
\end{lemma}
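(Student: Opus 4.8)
The claim is purely a matter of expanding the definition of $r_{b,\alpha}(k,l)$ and interchanging a sum with the integrals. I would start from the definition given just above the lemma,
$$r_{b,\alpha}(k,l) = \int_0^1\int_0^1 K_\alpha(x,y)\,\overline{\wal_k(x)}\,\wal_l(y)\rd x \rd y,$$
and substitute the Fourier representation of the one-dimensional kernel from (\ref{Kdef}), namely $K_\alpha(x,y) = 1 + \sum_{h\neq 0} |h|^{-2\alpha}\de^{2\pi\icomp h(x-y)}$. The constant term $1$ contributes $\int_0^1 \overline{\wal_k(x)}\rd x \int_0^1 \wal_l(y)\rd y$, which vanishes because $k,l \in \NN$ (i.e. $k,l \ge 1$), by part~2 of Proposition~\ref{prop1}.

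The essential step is then to split the exponential $\de^{2\pi\icomp h(x-y)} = \de^{2\pi\icomp h x}\de^{-2\pi\icomp h y}$ so that the double integral factorizes for each fixed $h$:
$$\int_0^1 \de^{2\pi\icomp h x}\,\overline{\wal_k(x)}\rd x \;\cdot\; \int_0^1 \de^{-2\pi\icomp h y}\,\wal_l(y)\rd y.$$
Comparing with the definition $\beta_{h,k} = \int_0^1 \de^{-2\pi\icomp h x}\wal_k(x)\rd x$, the second factor is exactly $\beta_{h,l}$, and the first factor is $\int_0^1 \de^{2\pi\icomp h x}\overline{\wal_k(x)}\rd x = \overline{\int_0^1 \de^{-2\pi\icomp h x}\wal_k(x)\rd x} = \overline{\beta_{h,k}}$. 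Summing over $h\in\integer\setminus\{0\}$ with the weight $|h|^{-2\alpha}$ yields the stated formula. The multidimensional case then follows from the product structure $r_{b,\alpha}(\bsk,\bsl)=\prod_j r_{b,\alpha}(k_j,l_j)$ already recorded in the text, so it suffices to treat the one-dimensional case as I have.

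The only point requiring care — and the one I expect to be the main (if modest) obstacle — is justifying the interchange of the infinite sum over $h$ with the two integrals. Since $\alpha > 1/2$, the series $\sum_{h\neq 0}|h|^{-2\alpha}$ is absolutely convergent, and $|\overline{\wal_k(x)}\wal_l(y)|=1$ while each exponential has modulus $1$; hence the double series converges absolutely and uniformly on $[0,1)^2$, so Fubini's theorem (or dominated convergence) legitimizes the termwise integration. This is exactly the same interchange already performed without comment in the proof of Theorem~\ref{th_wcer}, so I would either invoke it directly or note the absolute convergence in one line. Everything else is bookkeeping.
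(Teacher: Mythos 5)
Your proposal is correct and follows essentially the same route as the paper's own proof: substitute the series representation of $K_\alpha(x,y)$ into the definition of $r_{b,\alpha}(k,l)$, interchange the sum over $h$ with the double integral, and factor each term into $\overline{\beta_{h,k}}\,\beta_{h,l}$. The only difference is that you spell out two points the paper passes over silently -- that the constant term $1$ in the kernel contributes nothing since $k,l\ge 1$, and that the interchange is justified by the absolute convergence of $\sum_{h\neq 0}|h|^{-2\alpha}$ for $\alpha>1/2$ -- which is added care, not a different argument.
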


\begin{proof}
We have
\begin{eqnarray*}
r_{b,\alpha}(k,l) &=& \int_0^1\int_0^1 \sum_{h \in \integer\setminus\{0\}} |h|^{-2\alpha} \de^{2\pi\icomp h (x-y)} \overline{\wal_k(x)}\wal_l(y)\rd x \rd y \\ & = &  \sum_{h \in \integer\setminus\{0\}} |h|^{-2\alpha} \int_0^1  \de^{2\pi\icomp h x} \overline{\wal_k(x)} \rd x  \int_0^1  \de^{-2\pi\icomp h y} \wal_l(y) \rd y.
\end{eqnarray*}
The result follows.
\end{proof}

It is difficult to calculate the exact value of $r_{b,\alpha}(k,l)$
in general, but for our purposes it is enough to obtain an upper
bound. Note that $r_{b,\alpha}(k,k)$ is a non-negative real number.
\begin{lemma}\label{lem_rr}
Let $b \ge 2$ be an integer and let $\alpha > 1/2$ be a real number. The Walsh coefficients $r_{b,\alpha}(k,l)$ for $k,l \in\NN$  are bounded by  $$|r_{b,\alpha}(k,l)|^2 \le r_{b,\alpha}(k,k) r_{b,\alpha}(l,l).$$
\end{lemma}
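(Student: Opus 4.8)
Lemma~\ref{lem_rr} asserts that the off-diagonal Walsh coefficients are controlled by the diagonal ones via a Cauchy--Schwarz-type inequality:
$$|r_{b,\alpha}(k,l)|^2 \le r_{b,\alpha}(k,k)\, r_{b,\alpha}(l,l).$$
The plan is to recognize the formula from Lemma~\ref{lem_rgen} as an inner product and then invoke the Cauchy--Schwarz inequality in an appropriate sequence space.

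Let me think about this. From Lemma~\ref{lem_rgen} we have the explicit formula $r_{b,\alpha}(k,l) = \sum_{h\in\integer\setminus\{0\}} \overline{\beta_{h,k}}\beta_{h,l}/|h|^{2\alpha}$, where $\beta_{h,k} = \int_0^1 \de^{-2\pi\icomp h x}\wal_k(x)\rd x$. The key structural observation is that this is exactly an inner product on the weighted sequence space $\ell_2(\integer\setminus\{0\})$ with weights $|h|^{-2\alpha}$.

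**The approach.** First I would define, for each fixed $k$, the sequence $a^{(k)} = (\beta_{h,k})_{h\in\integer\setminus\{0\}}$ together with the positive weights $w_h = |h|^{-2\alpha}$. Then the bilinear form $\langle a,b\rangle := \sum_{h\neq 0} w_h \overline{a_h} b_h$ is a genuine (Hermitian, positive semidefinite) inner product, and Lemma~\ref{lem_rgen} says precisely that $r_{b,\alpha}(k,l) = \langle a^{(k)}, a^{(l)}\rangle$ with $r_{b,\alpha}(k,k) = \|a^{(k)}\|^2 \ge 0$, consistent with the remark preceding the lemma that $r_{b,\alpha}(k,k)$ is a non-negative real. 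The desired bound is then the Cauchy--Schwarz inequality $|\langle a^{(k)},a^{(l)}\rangle|^2 \le \langle a^{(k)},a^{(k)}\rangle \langle a^{(l)},a^{(l)}\rangle$, which delivers the claim verbatim.

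**What needs checking.** The only genuine technical point is that the sequences $a^{(k)}$ lie in this weighted $\ell_2$ space, i.e.\ that the sums converge absolutely so that Cauchy--Schwarz applies. Since $|\beta_{h,k}| \le \int_0^1 |\wal_k(x)|\rd x = 1$ (as $|\wal_k| = 1$), we have $\sum_{h\neq 0} w_h |\beta_{h,k}|^2 \le \sum_{h\neq 0}|h|^{-2\alpha} < \infty$ precisely because $\alpha > 1/2$ forces $2\alpha > 1$. This is the same convergence hypothesis under which $K_\alpha$ and hence $r_{b,\alpha}$ were defined, so no new assumption is needed. I would therefore write the proof in three short lines: invoke Lemma~\ref{lem_rgen} to identify the inner-product structure, note convergence from $\alpha>1/2$ and $|\beta_{h,k}|\le 1$, and apply Cauchy--Schwarz. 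I do not anticipate any serious obstacle; the entire content is spotting that the weighted-$\ell_2$ Cauchy--Schwarz inequality is the right abstract statement, after which the result is immediate.
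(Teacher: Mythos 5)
Your proof is correct and is essentially the paper's own argument: the paper likewise applies the Cauchy--Schwarz inequality to the sum $r_{b,\alpha}(k,l) = \sum_{h\neq 0}\overline{\beta_{h,k}}\beta_{h,l}/|h|^{2\alpha}$ from Lemma~\ref{lem_rgen}, your weighted-$\ell_2$ inner-product framing being just a more abstract packaging of the same step. Your extra convergence check (using $|\beta_{h,k}|\le 1$ and $2\alpha>1$) is sound and harmless, but the mathematical content coincides with the paper's proof.
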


\begin{proof}
Using Lemma~\ref{lem_rgen} we obtain
\begin{eqnarray*}
|r_{b,\alpha}(k,l)|^2 \le \left(\sum_{h\in\integer\setminus\{0\}} \frac{|\beta_{h,k}| |\beta_{h,l}|}{|h|^{2\alpha}}  \right)^2 & \le & \sum_{h\in\integer\setminus\{0\}} \frac{|\beta_{h,k}|^2}{|h|^{2\alpha}} \sum_{h\in\integer\setminus\{0\}} \frac{|\beta_{h,l}|^2}{|h|^{2\alpha}} \\ & = & r_{b,\alpha}(k,k) r_{b,\alpha}(l,l).
\end{eqnarray*}
The result follows.
\end{proof}

In the following we will write $r_{b,\alpha}(k)$ instead of $r_{b,\alpha}(k,k)$ and also $r_{b,\alpha}(\bsk)$ instead of $r_{b,\alpha}(\bsk,\bsk)$.

\begin{lemma}\label{lem_wcesqrt}
Let $C_1,\ldots, C_s \in \integer_b^{m \times m}$ be the generating matrices of a digital net $P_{b^m}$ and let $\D$ denote the dual net. Then for any natural number $\alpha$ the worst-case error in $\HH_\alpha$ is bounded by $$e(P_{b^m},K_\alpha) \le  \sum_{\bsk \in \D} \sqrt{r_{b,\alpha}(\bsk)}.$$
\end{lemma}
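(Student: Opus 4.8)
The plan is to start from the exact expression for the squared worst-case error furnished by Theorem~\ref{th_wcer}, namely $e^2(P_{b^m},K_\alpha) = \sum_{\bsk,\bsl\in\D} r_{b,\alpha}(\bsk,\bsl)$, and to bound this double sum by the square of a single sum over $\D$. The engine is the Cauchy--Schwarz type estimate of Lemma~\ref{lem_rr}, which controls each (possibly complex) coefficient $r_{b,\alpha}(\bsk,\bsl)$ by the geometric mean $\sqrt{r_{b,\alpha}(\bsk)\,r_{b,\alpha}(\bsl)}$ of the two diagonal coefficients; once that bound is available in $s$ dimensions the double sum factorises immediately.

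First I would note that $e^2(P_{b^m},K_\alpha)$ is a nonnegative real number, so from Theorem~\ref{th_wcer} the triangle inequality gives $e^2(P_{b^m},K_\alpha) = \left|\sum_{\bsk,\bsl\in\D} r_{b,\alpha}(\bsk,\bsl)\right| \le \sum_{\bsk,\bsl\in\D} |r_{b,\alpha}(\bsk,\bsl)|$. It then suffices to establish the multivariate inequality $|r_{b,\alpha}(\bsk,\bsl)| \le \sqrt{r_{b,\alpha}(\bsk)\,r_{b,\alpha}(\bsl)}$ for all $\bsk,\bsl \in \NN_0^s$, after which
\[
e^2(P_{b^m},K_\alpha) \le \sum_{\bsk,\bsl\in\D} \sqrt{r_{b,\alpha}(\bsk)}\,\sqrt{r_{b,\alpha}(\bsl)} = \left(\sum_{\bsk\in\D}\sqrt{r_{b,\alpha}(\bsk)}\right)^2 ,
\]
and taking square roots (both sides being nonnegative) yields the claimed bound.

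The one genuine point requiring care is the passage from the one-dimensional Lemma~\ref{lem_rr} to the $s$-dimensional coefficients. Here I would exploit the product structure $r_{b,\alpha}(\bsk,\bsl) = \prod_{j=1}^s r_{b,\alpha}(k_j,l_j)$ (and likewise $r_{b,\alpha}(\bsk) = \prod_{j=1}^s r_{b,\alpha}(k_j)$) and prove the componentwise estimate $|r_{b,\alpha}(k_j,l_j)| \le \sqrt{r_{b,\alpha}(k_j)\,r_{b,\alpha}(l_j)}$ for every coordinate, whence the multivariate inequality follows by multiplying over $j$. For coordinates with $k_j,l_j \ge 1$ this is exactly Lemma~\ref{lem_rr}. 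The remaining cases involve a vanishing index, and here I would invoke the identities $r_{b,\alpha}(0,0)=1$ and $r_{b,\alpha}(0,l)=r_{b,\alpha}(k,0)=0$ for $k,l\neq 0$ already computed inside the proof of Theorem~\ref{th_wcer}: if $k_j=l_j=0$ both sides equal $1$, while if exactly one of $k_j,l_j$ is zero the left-hand side is $0$ and the inequality is trivial. Since $r_{b,\alpha}(k,k)\ge 0$ all the square roots are real, so no sign issues arise.

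I expect the main (indeed the only) obstacle to be the bookkeeping around these zero components rather than any deep difficulty: once the componentwise Cauchy--Schwarz inequality is verified in all cases, the factorisation of the double sum and the final square root are purely formal.
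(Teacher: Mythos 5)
Your proposal is correct and follows essentially the same route as the paper's own proof: the exact expression from Theorem~\ref{th_wcer}, the triangle inequality, the Cauchy--Schwarz type bound of Lemma~\ref{lem_rr} to factorise the double sum into $\left(\sum_{\bsk\in\D}\sqrt{r_{b,\alpha}(\bsk)}\right)^2$, and a final square root. You are in fact more careful than the paper's one-line argument, which silently applies the one-dimensional Lemma~\ref{lem_rr} (stated only for $k,l\in\NN$) to vector indices; your explicit reduction via the product structure and your handling of the components with $k_j=0$ or $l_j=0$ (using $r_{b,\alpha}(0,0)=1$ and $r_{b,\alpha}(0,k)=r_{b,\alpha}(k,0)=0$ from the proof of Theorem~\ref{th_wcer}) fill exactly the detail the paper leaves implicit.
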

\begin{proof}
From Theorem~\ref{th_wcer} and Lemma~\ref{lem_rr} it follows that $$e^2(P_{b^m},K_\alpha) \le \sum_{\bsk,\bsl \in \D} |r_{b,\alpha}(\bsk,\bsl)| \le \left(\sum_{\bsk \in \D} \sqrt{r_{b,\alpha}(\bsk,\bsk)}\right)^2$$ and hence the result follows.
\end{proof}

 For $\alpha \ge 1$ a natural number we can write the reproducing kernel in terms of Bernoulli polynomials of degree $2\alpha$. Then  for $k \ge 1$ we have
$$r_{b,\alpha}(k) = (-1)^{\alpha+1} \frac{(2\pi)^{2\alpha}}{(2\alpha)!} \int_0^1 \int_0^1 B_{2\alpha}(|x-y|) \overline{\wal_{k}(x)} \wal_k(y) \rd x \rd y.$$
Note that the Bernoulli polynomials of even degree $2\alpha$ are of the form
$$B_{2\alpha}(x) = c_{\alpha} x^{2\alpha} + c_{\alpha-1} x^{2(\alpha -1)} + \cdots + c_0 + c x^{2\alpha -1},$$ for some rational numbers
$c_{\alpha}, \ldots, c_0, c$ with $c_\alpha, c \neq 0$. Let
\begin{equation}\label{eq_Ijk}
I_j(k) = \int_0^1 \int_0^1 |x-y|^j \overline{\wal_{k}(x)} \wal_{k}(y) \rd x \rd y.
\end{equation}
As mentioned above, $r_{b,\alpha}(k)$ is a real number such that $r_{b,\alpha}(k) \ge 0$ for all $k \ge 1$ and $\alpha > 1/2$, hence it follows that  for any natural number $\alpha$ we have $$r_{b,\alpha}(k) \le \frac{(2\pi)^{2\alpha}}{(2\alpha)!} \left(|c_{\alpha} I_{2\alpha}(k) | + |c_{\alpha-1}I_{2(\alpha-1)}(k)| + \cdots + |c_0 I_0(k)| +
|c I_{2\alpha-1}| \right).$$

Using Lemma~\ref{lem_Ijeven} and Lemma~\ref{lem_Iodd} from the Appendix we obtain the following lemma.
\begin{lemma}\label{lem_boundr}
Let $b,\alpha \in \NN$ with $b \ge 2$. For $k \in \NN$ with $k = \kappa_1 b^{a_1-1} +\cdots + \kappa_\nu b^{a_\nu-1}$ where $\nu \ge 1$, $\kappa_1,\ldots, \kappa_\nu \in \{1,\ldots, b-1\}$ and $1 \le a_\nu < \cdots < a_1$ let $q_{b,\alpha}(k) = b^{-a_1-\cdots - a_{\min(\nu,\alpha)}}$. Then for any natural number $\alpha$ and any natural number $b \ge 2$ there exists a constant $C_{b,\alpha} >0$ which depends only on $b$ and $\alpha$ such that $$r_{b,\alpha}(k) \le C^2_{b,\alpha}\; q^2_{b,\alpha}(k) \quad \mbox{for all } k\ge 1.$$
\end{lemma}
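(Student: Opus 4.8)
The plan is to start from the reduction already established just before the statement: since $B_{2\alpha}(x)=\sum_{i=0}^{\alpha}c_i x^{2i}+c\,x^{2\alpha-1}$, the quantity $r_{b,\alpha}(k)$ is bounded by a fixed linear combination, with coefficients depending only on $\alpha$, of the even integrals $I_{2i}(k)$ for $0\le i\le\alpha$ together with the single odd integral $I_{2\alpha-1}(k)$. It therefore suffices to bound each of these individually by a constant multiple of $q_{b,\alpha}^2(k)$ and then collect constants. The term $i=0$ drops out immediately: by Proposition~\ref{prop1} one has $I_0(k)=\left|\int_0^1\wal_k(x)\rd x\right|^2=0$ for every $k\ge1$.

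For the even integrals I would invoke Lemma~\ref{lem_Ijeven}. The mechanism behind it is that expanding $(x-y)^{2i}$ by the binomial theorem and using Walsh orthogonality gives $I_{2i}(k)=\sum_{p+q=2i}\binom{2i}{p}(-1)^q\,\overline{\theta_p(k)}\,\theta_q(k)$, where $\theta_p(k)=\int_0^1 x^p\wal_k(x)\rd x$ is the Walsh coefficient of a monomial. Since such a coefficient vanishes once $k$ has more than $p$ nonzero digits and is otherwise $O(b^{-(a_1+\cdots+a_{\min(\nu,p)})})$, every nonzero product above forces $\nu\le\min(p,q)\le i$. The facts I take from Lemma~\ref{lem_Ijeven} are thus: $I_{2i}(k)=0$ whenever $\nu>i$, and $|I_{2i}(k)|\le C_{b,i}\,b^{-2(a_1+\cdots+a_\nu)}$ when $\nu\le i$.

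Because $|x-y|^{2\alpha-1}$ is not a polynomial, its Walsh--Walsh coefficients do not vanish for large $\nu$, so here I simply quote Lemma~\ref{lem_Iodd} for the genuine estimate $|I_{2\alpha-1}(k)|\le C'_{b,\alpha}\,q_{b,\alpha}^2(k)$. I would then combine everything by splitting on $\nu$. If $\nu\le\alpha$ then $q_{b,\alpha}^2(k)=b^{-2(a_1+\cdots+a_\nu)}$; the even terms with $i<\nu$ vanish, those with $\nu\le i\le\alpha$ satisfy $\min(\nu,i)=\nu$ and hence are $\le C_{b,i}\,q_{b,\alpha}^2(k)$, and the odd term is already of this size. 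If $\nu>\alpha$ then $\nu>i$ for every $i\le\alpha$, so all even integrals vanish identically and the bound rests solely on $|I_{2\alpha-1}(k)|\le C'_{b,\alpha}\,q_{b,\alpha}^2(k)$. In either case, summing the finitely many surviving contributions and absorbing the $\alpha$-dependent coefficients produces a single constant $C_{b,\alpha}^2$, depending only on $b$ and $\alpha$, with $r_{b,\alpha}(k)\le C_{b,\alpha}^2\,q_{b,\alpha}^2(k)$.

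The assembly above is essentially bookkeeping; the real difficulty lies in the Appendix, that is, in Lemma~\ref{lem_Iodd}. In the decisive regime $\nu>\alpha$, where the net is refined well beyond the smoothness $\alpha$ and where the whole result effectively lives, every even contribution is exactly zero, so the entire estimate hinges on showing that the non-smooth function $|x-y|^{2\alpha-1}$ nevertheless has Walsh--Walsh coefficients decaying like $q_{b,\alpha}^2(k)$. Quantifying that decay, rather than combining the pieces here, is where I expect the main obstacle to be.
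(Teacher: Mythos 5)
Your overall assembly is the same as the paper's: write $B_{2\alpha}$ as a sum of even powers plus the single odd power $c\,x^{2\alpha-1}$, bound $r_{b,\alpha}(k)$ by the corresponding fixed linear combination of the $|I_j(k)|$, kill the low-degree even terms, and estimate what survives. The one flawed step is your claimed ``second fact'' from Lemma~\ref{lem_Ijeven}. That lemma contains only the vanishing statement $I_{2i}(k)=0$ for $\nu>i$; it says nothing about the size of $I_{2i}(k)$ when $\nu\le i$, so the bound $|I_{2i}(k)|\le C_{b,i}\,b^{-2(a_1+\cdots+a_\nu)}$ cannot be taken from it. The mechanism you sketch in its place (binomial expansion of $(x-y)^{2i}$ into products $\overline{\theta_p(k)}\,\theta_q(k)$ of Walsh coefficients of monomials) does yield the vanishing part, but the decay estimate $|\theta_p(k)|=O\bigl(b^{-(a_1+\cdots+a_{\min(\nu,p)})}\bigr)$ on which your argument hinges is proved nowhere in this paper, and establishing it is of essentially the same order of difficulty as the Appendix lemmas themselves. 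As written, the treatment of the even terms with $\nu\le i\le\alpha$ is therefore a gap.

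The repair is immediate, and it is exactly what the paper does: Lemma~\ref{lem_Iodd}, despite its name, is stated for every $j\ge 1$ with no parity restriction. Applying it with $j=2i$ gives $|I_{2i}(k)|\le \bar{C}_{b,2i}\,b^{-2(a_1+\cdots+a_{\min(\nu,i)})}$, which for $\nu\le i\le\alpha$ equals $\bar{C}_{b,2i}\,q^2_{b,\alpha}(k)$; the odd term is handled by the same lemma with $j=2\alpha-1$, since $\lceil (2\alpha-1)/2\rceil=\alpha$. With this substitution your case distinction ($\nu\le\alpha$ versus $\nu>\alpha$) and the collection of the finitely many $\alpha$-dependent constants go through verbatim, and your proof coincides with the paper's. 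Your closing observation is accurate: in the decisive regime $\nu>\alpha$ all even contributions vanish by Lemma~\ref{lem_Ijeven}, and the entire lemma rests on the odd-term estimate of Lemma~\ref{lem_Iodd}, which is where the real work of the Appendix lies.
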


Let now $q_{b,\alpha}(0) = 1$. For $\bsk = (k_1,\ldots, k_s) \in \NN_0^s$  we define $q_{b,\alpha}(\bsk) = \prod_{j=1}^s q_{b,\alpha}(k_j)$.  We have the following lemma.
\begin{lemma}\label{lem_q}
Let $m \ge 1$, $b\ge 2$ and $\alpha \ge 2$ be natural numbers and let $\D^\ast_{b^m, u} = \D_u \cap \{1,\ldots,b^m-1\}^{|u|}$.  Then we have
\begin{eqnarray*}
\lefteqn{ \sum_{\bsk \in \D} \sqrt{r_{b,\alpha}(\bsk)} } \\  & \le & \sum_{\emptyset \neq u \subseteq \{1,\ldots, s\}} (1 + b^{-\alpha m}C_{b,\alpha}(\alpha + b^{-2}))^{s-|u|}  C_{b,\alpha}^{|u|} (1+ \alpha +b^{-2})^{|u|} Q^\ast_{b,m,u,\alpha}(C_1,\ldots,C_s) \\  &&  + (1 + b^{-\alpha m} C_{b,\alpha}(\alpha + b^{-2}) )^s -1,
\end{eqnarray*}
where $C_{b,\alpha}$ is the constant from Lemma~\ref{lem_boundr} and where $$Q^\ast_{b,m,u,\alpha}(C_1,\ldots, C_s) = \sum_{\bsk \in \D^\ast_{b^m,u}} q_{b,\alpha}(\bsk).$$
\end{lemma}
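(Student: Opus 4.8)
The plan is to start from the bound $e(P_{b^m},K_\alpha)\le\sum_{\bsk\in\D}\sqrt{r_{b,\alpha}(\bsk)}$ of Lemma~\ref{lem_wcesqrt} and estimate the right-hand side directly, writing $\sqrt{r_{b,\alpha}(\bsk)}=\prod_{j=1}^s\sqrt{r_{b,\alpha}(k_j)}$. I would split every coordinate into a ``low'' and a ``high'' part: for $k_j\in\NN_0$ write $k_j=k_j^{(0)}+b^m k_j^{(1)}$ with $k_j^{(0)}\in\{0,\ldots,b^m-1\}$ and $k_j^{(1)}\in\NN_0$. The decisive observation is that membership in $\D$ depends only on the low parts, since $\vec{k}_j$ is built from the first $m$ digits of $k_j$, i.e.\ from $k_j^{(0)}$; hence $\bsk\in\D$ if and only if $\bsk\neq\bszero$ and $\sum_{j=1}^s C_j^T\vec{k}_j=\vec{0}$. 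I would therefore group the sum according to the set $u=\{j:k_j^{(0)}\neq 0\}$. For fixed $u$ the low parts $(k_j^{(0)})_{j\in u}$ range exactly over $\D^\ast_{b^m,u}$ (for $u=\emptyset$ the condition is vacuous), while the high parts $(k_j^{(1)})_{j=1}^s$ are completely unconstrained, so the sum will factorize over coordinates.

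The key technical ingredient I would establish first is a scaling identity for multiples of $b^m$: for every $k\in\NN_0$,
\[
r_{b,\alpha}(b^m k)=b^{-2\alpha m}\,r_{b,\alpha}(k),\qquad\text{so}\qquad \sqrt{r_{b,\alpha}(b^m k)}=b^{-\alpha m}\sqrt{r_{b,\alpha}(k)}.
\]
This follows from the representation $r_{b,\alpha}(k,l)=\sum_{h\neq 0}\overline{\beta_{h,k}}\beta_{h,l}/|h|^{2\alpha}$ of Lemma~\ref{lem_rgen} together with the elementary fact that $\beta_{h,b^m k}=\int_0^1\de^{-2\pi\icomp hx}\wal_{b^m k}(x)\rd x$ vanishes unless $b^m\mid h$ and equals $\beta_{h/b^m,k}$ otherwise (partition $[0,1)$ into the $b^m$ intervals of length $b^{-m}$, use $\wal_{b^m k}(x)=\wal_k(\{b^m x\})$, and sum the geometric series in $h$). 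This identity is exactly what produces the factor $b^{-\alpha m}$ in the statement, and it is sharper than Lemma~\ref{lem_boundr} applied to $b^m k^{(1)}$ whenever $k^{(1)}$ has fewer than $\alpha$ nonzero digits.

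With the grouping and the scaling identity in hand I would reduce everything to two one-dimensional estimates. For a coordinate $j\notin u$ the factor is $1+\sum_{k^{(1)}\ge 1}\sqrt{r_{b,\alpha}(b^m k^{(1)})}=1+b^{-\alpha m}\sum_{k^{(1)}\ge 1}\sqrt{r_{b,\alpha}(k^{(1)})}$, which by Lemma~\ref{lem_boundr} and the bound $\sum_{k\ge 1}q_{b,\alpha}(k)\le\alpha+b^{-2}$ is at most $1+b^{-\alpha m}C_{b,\alpha}(\alpha+b^{-2})$; the convergence of $\sum_{k\ge1}q_{b,\alpha}(k)$ is precisely where $\alpha\ge 2$ enters, and the value $\alpha+b^{-2}$ comes from summing the relevant geometric series grouped by the number of nonzero digits. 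For a coordinate $j\in u$ with low part $k^{(0)}\in\{1,\ldots,b^m-1\}$ fixed, the factor is $\sum_{k^{(1)}\ge 0}\sqrt{r_{b,\alpha}(k^{(0)}+b^m k^{(1)})}\le C_{b,\alpha}\sum_{k^{(1)}\ge 0}q_{b,\alpha}(k^{(0)}+b^m k^{(1)})$, and I would show $\sum_{k^{(1)}\ge 0}q_{b,\alpha}(k^{(0)}+b^m k^{(1)})\le(1+\alpha+b^{-2})\,q_{b,\alpha}(k^{(0)})$. Multiplying the per-coordinate factors, summing the remaining $\prod_{j\in u}q_{b,\alpha}(k_j^{(0)})$ over $\D^\ast_{b^m,u}$ to form $Q^\ast_{b,m,u,\alpha}$, and treating $u=\emptyset$ separately (it contributes $(1+b^{-\alpha m}C_{b,\alpha}(\alpha+b^{-2}))^s-1$, the $-1$ removing the forbidden term $\bsk=\bszero$) yields exactly the asserted inequality.

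The main obstacle is the last one-dimensional estimate, the bound on $\sum_{k^{(1)}\ge 0}q_{b,\alpha}(k^{(0)}+b^m k^{(1)})$ in terms of $q_{b,\alpha}(k^{(0)})$. The difficulty is that $q_{b,\alpha}$ records only the $\min(\nu,\alpha)$ most significant nonzero digits, so inserting the high digits of $b^m k^{(1)}$ (which all lie above the digits of $k^{(0)}$) changes which digits are counted; one must distinguish $\nu_1\ge\alpha$ from $1\le\nu_1<\alpha$ for the number $\nu_1$ of nonzero digits of $k^{(1)}$, and in each case weigh the retained high positions against the low positions that drop out of $q_{b,\alpha}(k^{(0)})$, using $q_{b,\alpha}(k^{(0)})\ge b^{-\alpha m}$ because all its positions are at most $m$. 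This case analysis is routine but carries the real bookkeeping; the scaling identity and the grouping by $u$ are the conceptual steps.
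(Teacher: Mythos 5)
Your proposal is correct and follows essentially the same route as the paper's own proof: the same low/high digit decomposition $\bsk = \bsh + b^m\bsl$, the same grouping by the support set $u$ of the low part (with $u = \emptyset$ producing the $(1+b^{-\alpha m}C_{b,\alpha}(\alpha+b^{-2}))^s - 1$ term), the same scaling identity $r_{b,\alpha}(b^m k) = b^{-2\alpha m} r_{b,\alpha}(k)$ (which is the paper's Lemma~\ref{lem_rbm}), and the same one-dimensional estimates $\sum_{l\ge 1} q_{b,\alpha}(l) \le \alpha + b^{-2}$ and the bound of the shifted sum by $(1+\alpha+b^{-2})\,q_{b,\alpha}(k^{(0)})$. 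The only cosmetic differences are that the paper cites its appendix lemma for the scaling identity where you rederive it directly, and that the paper obtains your final one-dimensional estimate via the pointwise inequality $q_{b,\alpha}(h + b^m l) \le q_{b,\alpha}(h)\, q_{b,\alpha}(l)$ before summing over $l$, which is exactly the case analysis you describe.
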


\begin{proof}
Every $\bsk \in \NN_0^s$ can be uniquely written in the form $\bsk =
\bsh + b^m \bsl$ with $\bsh \in \{0,\ldots,b^m-1\}^s$ and $\bsl \in
\NN_0^s$. Let $\D_{b^m} = \D \cap \{0,\ldots,b^m-1\}^s$. Then we
have
$$\sum_{\bsk \in \D} \sqrt{r_{b,\alpha}(\bsk)} = \sum_{\bsl \in
\NN_0^s\setminus\{\bszero\}} \sqrt{r_{b,\alpha}(b^m\bsl)} +
\sum_{\bsh \in \D_{b^m}}\sum_{\bsl \in \NN_0^s}\sqrt{
r_{b,\alpha}(\bsh + b^m \bsl)}.$$ For the first sum we have $$
\sum_{\bsl \in \NN_0^s\setminus\{\bszero\}}
\sqrt{r_{b,\alpha}(b^m\bsl)} = -1+  \sum_{\bsl \in \NN_0^s}
\sqrt{r_{b,\alpha}(b^m\bsl)} = -1 + \left(\sum_{l=0}^\infty \sqrt{
r_{b,\alpha}(b^m l)} \right)^s.$$ By using Lemma~\ref{lem_rbm} from
the Appendix and Lemma~\ref{lem_boundr} we obtain that $$ \sum_{l =
0}^\infty \sqrt{r_{b,\alpha}(b^m l)} = 1 + b^{-\alpha m}
\sum_{l=1}^\infty \sqrt{r_{b,\alpha}(l)} \le  1 + b^{-\alpha m}
C_{b,\alpha} \sum_{l=1}^\infty q_{b,\alpha}(l).$$ We need to show
that $ \sum_{l=1}^\infty q_{b,\alpha}(l) \le \alpha + b^{-2}$. Let
$l = l_1 b^{c_1-1} + \cdots + l_\nu b^{c_\nu-1}$ for some $\nu \ge
1$ with $1\le c_\nu < \cdots < c_{1}$ and $l_1,\ldots, l_\nu \in
\{1,\ldots, b-1\}$. First we consider the sum over all those $l$ for
which $1 \le \nu \le \alpha$. This part of the sum is bounded by
$$\sum_{\nu = 1}^\alpha (b-1)^{\nu} \sum_{c_1 = \nu}^\infty
\sum_{c_2 = \nu-1}^{c_1-1} \cdots \sum_{c_\nu = 1}^{c_{\nu-1}-1}
b^{-c_1 - \ldots - c_\nu}\le \sum_{\nu = 1}^\alpha (b-1)^{\nu}
(\sum_{c = 1}^\infty b^{-c})^\nu = \alpha.$$ If $\nu > \alpha$ we
have $q_{b,\alpha}(l) = q_{b,\alpha}(l')$ for $l = l_1 b^{c_1-1} +
\cdots + l_\nu b^{c_\nu-1}$ and where $l' = l'(l) = l_1 b^{c_1-1} +
\cdots + l_\alpha b^{c_\alpha-1}.$ Thus we only need to sum over all
$l'$ (i.e. natural numbers with exactly $\alpha$ digits) and for
given $l'$ multiplying it with the number of $l$ which yield the
same $l'$, which is $b^{c_\alpha-1} - 1$ (and which we bound in the
following by $b^{c_\alpha-1}$). We have
\begin{eqnarray*}
\lefteqn{ (b-1)^\alpha \sum_{c_1 = \alpha+1}^\infty \sum_{c_2 =
\alpha}^{c_1-1} \cdots \sum_{c_\alpha = 2}^{c_{\alpha-1}-1} b^{-c_1
- \cdots - c_\alpha} b^{c_\alpha - 1} } \\ & = & b^{-1}(b-1)^\alpha
\sum_{c_1 = \alpha+1}^\infty \sum_{c_2 = \alpha}^{c_1-1} \cdots
\sum_{c_{\alpha-1} = 3}^{c_{\alpha-2}-2} (c_{\alpha-1}-2) b^{-c_1 -
\cdots - c_{\alpha-1}} \\ & \le &  b^{-3}(b-1)^\alpha
(\sum_{c=1}^\infty b^{-c})^{\alpha-2} \sum_{c=1}^\infty c b^{-c} \\
& = & \frac{1}{b^2}.
\end{eqnarray*}
Thus we obtain $ \sum_{l=1}^\infty q_{b,\alpha}(l) \le \alpha + b^{-2}$.

Further we have $$ \sum_{\bsh \in \D_{b^m}}\sum_{\bsl \in \NN_0^s}
\sqrt{r_{b,\alpha}(\bsh + b^m \bsl)} = \sum_{\bsh \in \D_{b^m}}
\prod_{j=1}^s \sum_{l=0}^\infty \sqrt{r_{b,\alpha}(h_j + b^m l)},$$
where $\bsh = (h_1,\ldots, h_s)$. By using Lemma~\ref{lem_rbm} from
the Appendix and Lemma~\ref{lem_boundr} we obtain
$$\sum_{l=0}^{\infty} \sqrt{r_{b,\alpha}(b^m l)} = 1 +  b^{-\alpha
m} C_{b,\alpha} \sum_{l=1}^\infty q_{b,\alpha}(l) \le 1 + b^{-\alpha
m} C_{b,\alpha} (\alpha + b^{-2}).$$ Let now $0 < h_j < b^m$. From
Lemma~\ref{lem_boundr} we obtain  $$\sqrt{r_{b,\alpha}(h_j + b^m l)}
\le C_{b,\alpha} q_{b,\alpha}(h_j+ b^m l) \le C_{b,\alpha}
q_{b,\alpha}(h_j) q_{b,\alpha}(l).$$ From above we have
$\sum_{l=0}^\infty q_{b,\alpha}(l) \le 1 + \alpha + b^{-2}$ and
hence $$\sum_{l=0}^\infty \sqrt{r_{b,\alpha}(h_j + b^m l)} \le
q_{b,\alpha}(h_j)C_{b,\alpha} \sum_{l=0}^\infty q_{b,\alpha}(l) \le
C_{b,\alpha}(1 + \alpha + b^{-2}) q_{b,\alpha}(h_j).$$ Thus we
obtain
\begin{eqnarray*}
 \lefteqn{ \sum_{\bsh \in \D_{b^m}}\sum_{\bsl \in \NN_0^s} \sqrt{r_{b,\alpha}(\bsh + b^m \bsl)} } \\ &  = & \sum_{\emptyset \neq u \subseteq \{1,\ldots, s\}} \sum_{\bsh_u \in \D^\ast_{b^m,u}} \prod_{j \in u} \sum_{l = 0}^\infty \sqrt{r_{b,\alpha}(h_j + b^m l)} \prod_{j \not\in u} \sum_{l=0}^\infty \sqrt{r_{b,\alpha}(b^m l)}  \\  & \le &  \sum_{\emptyset \neq u \subseteq \{1,\ldots, s\}} (1+ b^{-\alpha m} C_{b,\alpha} (\alpha + b^{-2}))^{s-|u|}  C_{b,\alpha}^{|u|} (1+\alpha +b^{-2})^{|u|} \sum_{\bsh_u \in \D^\ast_{b^m,u}} \prod_{j \in u} q_{b,\alpha}(h_j),
\end{eqnarray*}
where $\bsh_u = (h_j)_{j\in u}$. The result follows.
\end{proof}

In \cite{shar} it was shown that the square worst-case error for numerical integration in the Korobov space can at best be of $\landau(N^{-2\alpha} (\log N)^{s-1})$, where $N$ is the number of quadrature points. Hence Lemma~\ref{lem_q} shows that it is enough to consider only $Q^\ast_{b,m,u,\alpha}(C_1,\ldots, C_s)$ in order to investigate the convergence rate of digitally shifted digital nets.

\section{$(t,\alpha,\beta,m,s)$-nets and $(t,\alpha,\beta,s)$-sequences}\label{sec_talpha}

The $t$ value of a $(t,m,s)$-net is a quality parameter for the distribution properties of the net. A low $t$ value yields well distributed point sets and it has been shown, see for example \cite{DP05, niesiam}, that a small $t$ value also guarantees a small worst-case error for integration in Sobolev spaces for which the partial first derivatives are square integrable.

In the following we will show how the definition of the $t$ value needs to be modified in order to obtain faster convergence rates for periodic Sobolev spaces for which the partial derivatives up to order $\alpha$ are square integrable. It is the aim of this definition to translate the problem of minimizing the worst-case error into an algebraical problem concerning the generating matrices. (This definition can therefore also be used in an computer search algorithm, where one could for example search for the polynomial lattice with the smallest $t(\alpha)$ value which in turn yields a small worst-case error for integration of periodic functions.)

For natural numbers $\alpha \ge 1$, Lemma~\ref{lem_boundr} suggests to define the following metric $\mu_{b,\alpha}(\bsk,\bsl) = \mu_{b,\alpha}(\bsk\ominus \bsl)$ on $\NN_0^s$ which is an extension of the metric introduced in \cite{nie86}, see also \cite{rt} (for $\alpha = 1$ we basically obtain the metric in \cite{nie86, rt}). Here $\mu_{b,\alpha}(0) = 0 $ and for $k\in \NN$ with $k = \kappa_{\nu} b^{a_{\nu}-1} + \cdots + \kappa_{1} b^{a_1-1}$ where $1\le a_{\nu} < \cdots < a_1$ and $\kappa_{i} \in \{1,\ldots, b-1\}$ let $\mu_{b,\alpha}(k) = a_{1} + \cdots + a_{\min(\alpha,\nu)}$. For a $\bsk \in \NN_0^s$ with $\bsk = (k_1,\ldots, k_s)$ let $\mu_{b,\alpha}(\bsk) = \mu_{b,\alpha}(k_1) + \cdots + \mu_{b,\alpha}(k_s)$. Then we have $q_{b,\alpha}(\bsk) = b^{-\mu_{b,\alpha}(\bsk)}$. Hence in order to obtain a small worst-case error in the Korobov space $\HH_\alpha$, we need digital nets for which $\min\{\mu_{b,\alpha}(\bsk): \bsk \in \D\}$ is large. We can translate this property into a linear independence property of the row vectors of the generating matrices $C_1,\ldots, C_s$. We have the following definition.

\begin{definition}\rm\label{def_net}
Let $m,\alpha \ge 1$ be natural numbers, let $0 <\beta \le \alpha$
be a real number and let $0 \le t \le \beta m$  be  a natural
number. Let $R_b$ be a ring with $b$ elements and let $C_1,\ldots,
C_s \in R_b^{m \times m}$ with $C_j = (c_{j,1}, \ldots, c_{j,m})^T$.
If for all $1 \le i_{j,\nu_j} < \cdots < i_{j,1} \le m$, where $0
\le \nu_j \le m$ for all $j = 1,\ldots, s$, with $$i_{1,1} + \cdots
+ i_{1,\min(\nu_1,\alpha)} + \cdots + i_{s,1} + \cdots +
i_{s,\min(\nu_s,\alpha)} \le \beta m - t$$ the vectors
$$c_{1,i_{1,\nu_1}}, \ldots, c_{1,i_{1,1}}, \ldots,
c_{s,i_{s,\nu_s}}, \ldots, c_{s,i_{s,1}}$$ are linearly independent
over $R_b$ then the digital net which has generating matrices
$C_1,\ldots, C_s$ is called a digital $(t,\alpha,\beta,m,s)$-net
over $R_b$. Further we call a digital $(t,\alpha,\alpha,m,s)$-net
over $R_b$ a digital $(t,\alpha,m,s)$-net over $R_b$.

If $t$ is the smallest non-negative integer such that the digital
net generated by $C_1,\ldots, C_s$ is a digital
$(t,\alpha,\beta,m,s)$-net, then we call the digital net a strict
digital $(t,\alpha,\beta, m,s)$-net or a strict digital
$(t,\alpha,m,s)$-net if $\alpha = \beta$.
\end{definition}

A concrete example of a digital $(t,\alpha,\beta,m,s)$-net, where we
also calculate the exact $t$-value by hand, is given in
Section~\ref{sec_example}.

\begin{remark}\rm
Using duality theory (see \cite{np}) it follows that for every digital $(t,\alpha,\beta,m,s)$-net we have $\min_{\bsk \in \D} \mu_{b,\alpha}(\bsk) > \beta m - t$ and for a strict digital $(t,\alpha,\beta,m,s)$-net we have $\min_{\bsk \in \D} \mu_{b,\alpha}(\bsk) = \beta m - t + 1$. Hence digital $(t,\alpha,\beta,m,s)$-nets with high quality have a large value of $\beta m - t$.
\end{remark}

\begin{definition}\rm\label{def_seq}
Let $\alpha \ge 1$ and $t \ge 0$ be integers and let $0 <\beta \le \alpha$ be a real number. Let $R_b$ be a ring with $b$ elements and let $C_1,\ldots, C_s \in R_b^{\infty \times \infty}$ with $C_j = (c_{j,1}, c_{j,2}, \ldots)^T$. Further let $C_{j,m}$ denote the left upper $m \times m$ submatrix of  $C_j$. If for all $m > t/\beta$ the matrices $C_{1,m},\ldots, C_{s,m}$ generate a digital $(t,\alpha,\beta,m,s)$-net then the digital sequence with generating matrices $C_1,\ldots, C_s$ is called a digital $(t,\alpha,\beta,s)$-sequence over $R_b$. Further we call a digital $(t,\alpha,\alpha,s)$-sequence over $R_b$ a digital $(t,\alpha,s)$-sequence over $R_b$.

If $t$ is the smallest non-negative integer such that the digital sequence generated by $C_1,\ldots, C_s$ is a digital $(t,\alpha,\beta,s)$-sequence, then we call the digital sequence a strict digital $(t,\alpha,\beta, s)$-sequence or a strict digital $(t,\alpha,s)$-sequence if $\alpha = \beta$.
\end{definition}

\begin{remark}\rm%\label{rem_seq}
Note that the definition of a digital $(t, 1,m,s)$-net coincides with the definition of a digital $(t,m,s)$-net and the definition of a digital $(t, 1,s)$-sequence coincides with the definition of a digital $(t,s)$-sequence as defined by Niederreiter~\cite{niesiam}. Further note that the $t$-value depends on $\alpha$ and $\beta$, i.e., $t = t(\alpha,\beta)$ or $t = t(\alpha)$ if $\alpha = \beta$.
\end{remark}

In the following theorem we establish some propagation rules.
\begin{theorem}\label{th_prop}
Let $P$ be a digital $(t,\alpha,\beta,m,s)$-net over a ring $R_b$ and let  $S$ be a digital $(t,\alpha,\beta,s)$-sequence over a ring $R_b$. Then we have:
\begin{enumerate}
\item[(i)] $P$ is a digital $(t',\alpha,\beta',m,s)$-net for all $1\le \beta' \le \beta$ and  all $t \le t' \le \beta' m$ and $S$ is a digital $(t',\alpha,\beta',s)$-sequence for all $1 \le \beta' \le \beta$ and all $t \le t'$.
\item[(ii)] $P$ is a digital $(t',\alpha',\beta',m,s)$-net for all $1\le \alpha' \le m$ and $S$ is a digital $(t',\alpha',\beta',s)$-sequence for all $\alpha' \ge 1$, where  $\beta' = \beta \min(\alpha,\alpha')/\alpha$ and $t' =  \lceil t \min(\alpha,\alpha')/\alpha\rceil$.
\item[(iii)] Any digital $(t,\alpha,m,s)$-net is a digital $(\lceil t \alpha'/\alpha\rceil ,\alpha',m,s)$-net for all $1\le \alpha' \le \alpha$ and every digital $(t,\alpha,s)$-sequence is a digital $(\lceil t\alpha'/\alpha \rceil,\alpha',s)$-sequence for all $1\le \alpha' \le \alpha$.
\end{enumerate}
\end{theorem}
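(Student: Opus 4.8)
The theorem collects three propagation rules, and the natural strategy is to reduce everything to the combinatorial linear-independence condition in Definition~\ref{def_net} and Definition~\ref{def_seq}, then verify that shrinking a parameter only relaxes that condition.  Throughout, the key observation is that for a fixed set of row indices $i_{j,\nu_j} < \cdots < i_{j,1}$, the index sum $S := \sum_{j=1}^s (i_{j,1} + \cdots + i_{j,\min(\nu_j,\alpha)})$ is exactly the quantity being thresholded, and the linear-independence requirement is imposed only on selections with $S \le \beta m - t$.  A net has quality $(t,\alpha,\beta,m,s)$ iff every such selection is linearly independent.  So to prove $P$ is also a net for weaker parameters $(t',\alpha',\beta')$, I only need to show that each selection admissible under the \emph{new} threshold is already admissible under the \emph{old} one, i.e.\ its old index sum does not exceed $\beta m - t$; the required linear independence then comes for free from the hypothesis on $P$.

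\textbf{Part (i).}  Here $\alpha$ is fixed, so for a given selection the index sum $S$ is unchanged between the two parameter settings.  If a selection satisfies $S \le \beta' m - t'$, then since $t' \ge t$ and $\beta' \le \beta$ we get $S \le \beta' m - t' \le \beta m - t$, so the $(t,\alpha,\beta,m,s)$ hypothesis applies and the vectors are independent.  This proves the net statement; the sequence statement follows by applying the net result to every $C_{j,m}$ with $m > t'/\beta'$, noting that such $m$ also satisfies $m > t/\beta$ so the submatrices do generate $(t,\alpha,\beta,m,s)$-nets.

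\textbf{Parts (ii) and (iii).}  Since (iii) is the special case $\alpha=\beta$ of (ii), I would prove (ii) and deduce (iii).  Write $\gamma = \min(\alpha,\alpha')$, $\beta' = \beta\gamma/\alpha$, $t' = \lceil t\gamma/\alpha\rceil$.  The subtlety is that changing $\alpha$ to $\alpha'$ changes the truncation in each coordinate from $\min(\nu_j,\alpha)$ to $\min(\nu_j,\alpha')$, hence changes the index sum.  For a selection with new index sum $S' = \sum_j (i_{j,1}+\cdots+i_{j,\min(\nu_j,\alpha')})$, I want to bound the old index sum $S = \sum_j (i_{j,1}+\cdots+i_{j,\min(\nu_j,\alpha)})$ in terms of $S'$.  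The core estimate, which is the main obstacle, is a per-coordinate inequality of the shape
\[
i_{j,1}+\cdots+i_{j,\min(\nu_j,\alpha)} \;\le\; \frac{\alpha}{\gamma}\bigl(i_{j,1}+\cdots+i_{j,\min(\nu_j,\alpha')}\bigr),
\]
which I expect to prove by comparing the largest $\min(\nu_j,\alpha)$ indices against the largest $\min(\nu_j,\gamma)$ of them, using that the indices are strictly decreasing (so the top $\gamma$ carry at least a $\gamma/\alpha$ fraction of the weight) — this is exactly the kind of averaging argument that underlies the scaling by $\min(\alpha,\alpha')/\alpha$.  Summing over $j$ gives $S \le (\alpha/\gamma) S'$.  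Then if $S' \le \beta' m - t'$ I would chain $S \le (\alpha/\gamma)S' \le (\alpha/\gamma)(\beta'm - t') = \beta m - (\alpha/\gamma)t' \le \beta m - t$, where the last step uses $(\alpha/\gamma)t' = (\alpha/\gamma)\lceil t\gamma/\alpha\rceil \ge t$; the ceiling is placed precisely to keep $t'$ integral while preserving this inequality.  Hence the old hypothesis yields the needed independence, proving $P$ is a $(t',\alpha',\beta',m,s)$-net, and the sequence version again follows coordinatewise.  I would close by specializing $\beta=\alpha$ (so $\beta'=\gamma=\min(\alpha,\alpha')=\alpha'$ when $\alpha'\le\alpha$) to recover (iii).
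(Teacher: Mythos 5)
Your proposal is correct and takes essentially the same route as the paper's proof: reduce everything to the net case (checking, as you do, that $m > t'/\beta'$ forces $m > t/\beta$), dispose of (i) as immediate, and for (ii) chain the inequalities $S \le \frac{\alpha}{\min(\alpha,\alpha')}S' \le \beta m - \frac{\alpha}{\min(\alpha,\alpha')}\left\lceil t\,\tfrac{\min(\alpha,\alpha')}{\alpha}\right\rceil \le \beta m - t$, with (iii) obtained as the special case $\beta = \alpha$, $\alpha' \le \alpha$. The only difference is that you supply the justification (the averaging argument for strictly decreasing indices) for the per-coordinate bound $S \le \frac{\alpha}{\min(\alpha,\alpha')}S'$, which the paper states as the first inequality of its chain without proof.
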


\begin{proof}
Note that it follows from Definition~\ref{def_seq} that we need to prove the result only for digital nets.

The first part follows trivially. To prove the second part choose an $\alpha'$ such that $\alpha' \ge 1$. Then choose arbitrary $1 \le i_{j,\nu_j} < \cdots < i_{j,1} \le m$ with $0 \le \nu_j \le m$ such that $$i_{1,1} + \cdots + i_{1,\min(\nu_1,\alpha')} + \cdots + i_{s,1} + \cdots + i_{s,\min(\nu_s,\alpha')} \le m\beta \frac{\min(\alpha,\alpha')}{\alpha} - \left\lceil t \frac{\min(\alpha,\alpha')}{\alpha} \right\rceil.$$ We need to show that the vectors $$c_{1,i_{1,\nu_1}}, \ldots, c_{1,i_{1,1}}, \ldots, c_{s,i_{s,\nu_s}}, \ldots, c_{s,i_{s,1}}$$ are linearly independent over $R_b$. This is certainly the case as long as $$i_{1,1} + \cdots + i_{1,\min(\nu_1,\alpha)} + \cdots + i_{s,1} + \cdots + i_{s,\min(\nu_s,\alpha)} \le \beta m -  t.$$ Indeed we have
\begin{eqnarray*}
\lefteqn{ i_{1,1} + \cdots + i_{1,\min(\nu_1,\alpha)} + \cdots + i_{s,1} + \cdots + i_{s,\min(\nu_s,\alpha)} } \\ &\le & \frac{\alpha}{\min(\alpha,\alpha')} (i_{1,1} + \cdots + i_{1,\min(\nu_1,\alpha')} + \cdots + i_{s,1} + \cdots + i_{s,\min(\nu_s,\alpha')}) \\ &\le & m \beta - \frac{\alpha}{\min(\alpha,\alpha')}\left\lceil t\frac{\min(\alpha,\alpha')}{\alpha} \right\rceil \\ & \le & m\beta - t,
\end{eqnarray*}
and hence the second part follows. The third part is just a special case of the second part.
\end{proof}

\begin{remark}\rm
Note by choosing $\alpha'= 1$ in part $(iii)$ of Theorem~\ref{th_prop} it follows that digital  $(t,\alpha,m,s)$-nets and digital  $(t,\alpha,s)$-sequences are also well distributed point sets if the value of $t$ is small, see \cite{niesiam}.
\end{remark}

\section{Explicit constructions of digital $(t,\alpha,\beta,m,s)$-nets and digital $(t,\alpha,\beta,s)$-sequences}\label{sec_talphacons}

In this section we show how suitable digital $(t,\alpha,\beta,m,s)$-nets and digital $(t,\alpha,\beta,s)$-sequences can be constructed.

Let $d \ge 1$ and let $C_1,\ldots, C_{sd}$ be the generating matrices of a digital $(t,m,sd)$-net. Note that many explicit examples of such generating matrices are known, see for example \cite{faure, niesiam, NX, sob67} and the references therein. For the construction of a $(t,\alpha,\beta,m,s)$-net any of the above mentioned explicit constructions can be used, but as will be shown below the quality of the $(t,\alpha,\beta,m,s)$-net obtained depends on the quality of the underlying digital $(t,m,sd)$-net on which our construction is based on.

Let $C_j = (c_{j,1},\ldots, c_{j,m})^T$ for $j = 1,\ldots, sd$, i.e., $c_{j,l}$ are the row vectors of $C_j$. Now let the matrix $C^{(d)}_{j}$ be made of the first rows of the matrices $C_{(j-1)d + 1},\ldots, C_{jd}$, then the second rows of $C_{(j-1)d+1},\ldots, C_{jd}$ and so on till $C^{(d)}_{j}$ is an $m \times m$ matrix, i.e., $C^{(d)}_j = (c^{(d)}_{j,1},\ldots, c^{(d)}_{j,m})^T$ where $c^{(d)}_{j,l} = c_{u,v}$ with $l = (v-j)d + u$, $1\le v \le m$ and $(j-1)d < u \le jd$ for $l = 1,\ldots, m$ and $j = 1,\ldots, s$. In the following we will show that the matrices $C^{(d)}_{1},\ldots, C^{(d)}_{s}$ are the generating matrices of a digital $(t,\alpha,\min(\alpha,d),m,s)$-net.

\begin{theorem}\label{th_talphabeta}
Let $d \ge 1$ be a natural number and let $C_{1},\ldots, C_{sd}$ be
the generating matrices of a digital $(t',m,sd)$-net over some ring
$R_b$ with $b$ elements. Let $C^{(d)}_{1},\ldots, C^{(d)}_{s}$ be
defined as above. Then for any $\alpha \ge 1$ the matrices
$C^{(d)}_{1},\ldots, C^{(d)}_{s}$ are  generating matrices of a
digital $(t,\alpha,\min(\alpha,d),m,s)$-net over $R_b$ with $$t =
\min(\alpha,d)\;t' + \left\lceil \frac{s(d-1)
\min(\alpha,d)}{2}\right\rceil.$$
\end{theorem}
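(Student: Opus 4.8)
The plan is to verify Definition~\ref{def_net} directly. Write $\beta = \min(\alpha,d)$ and fix indices $1 \le i_{j,\nu_j} < \cdots < i_{j,1} \le m$ with $0 \le \nu_j \le m$ for $j = 1,\ldots,s$ satisfying $\sum_{j=1}^s \sum_{r=1}^{\min(\nu_j,\alpha)} i_{j,r} \le \beta m - t$; the goal is to show that the corresponding rows of $C^{(d)}_1,\ldots,C^{(d)}_s$ are linearly independent over $R_b$. The first step is a translation back to the underlying net: by construction each row $c^{(d)}_{j,i_{j,r}}$ equals the $v_{j,r}$-th row of the original matrix $C_{(j-1)d+w_{j,r}}$, where $v_{j,r} = \lceil i_{j,r}/d \rceil$ and $w_{j,r} = i_{j,r} - (v_{j,r}-1)d \in \{1,\ldots,d\}$. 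For each $u \in \{1,\ldots,sd\}$ let $e_u$ be the largest original row index among the selected rows mapped into $C_u$ (and $e_u = 0$ if none). Since distinct $i_{j,r}$ yield distinct pairs $(u,v)$ and different blocks $j$ use disjoint ranges of $u$, the selected rows are distinct and lie among the top $e_u$ rows of the $C_u$. As a subset of a linearly independent family remains linearly independent over any ring, it suffices to prove $\sum_{u=1}^{sd} e_u \le m - t'$ and then invoke the digital $(t',m,sd)$-net property, which guarantees that the first $e_u$ rows of the matrices $C_1,\ldots,C_{sd}$ are jointly linearly independent whenever $\sum_u e_u \le m-t'$.

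The heart of the matter is a combinatorial estimate for each block $j$. For $w = 1,\ldots,d$ set $E^{(j)}_w = \max\{v_{j,r} : w_{j,r} = w\}$, the largest original row index used in $C_{(j-1)d+w}$, so that $\sum_{w=1}^d E^{(j)}_w = \sum_{u=(j-1)d+1}^{jd} e_u$. Let $\mu \le \min(\nu_j,d)$ be the number of columns $w$ actually used, and for each such $w$ pick the selected index $i^{(w)}$ attaining $E^{(j)}_w$, so that $i^{(w)} = (E^{(j)}_w - 1)d + w$. Summing the identity $d\,E^{(j)}_w = i^{(w)} + (d-w)$ over the used columns and noting that the values $d-w$ are distinct elements of $\{0,\ldots,d-1\}$ (hence sum to at most $d(d-1)/2$) yields
\begin{equation*}
\beta \sum_{w=1}^d E^{(j)}_w \le \frac{\beta}{d}\sum_{w\ \mathrm{used}} i^{(w)} + \frac{(d-1)\beta}{2} \le \frac{\beta}{d}\sum_{r=1}^{\mu} i_{j,r} + \frac{(d-1)\beta}{2},
\end{equation*}
the final step holding because the $i^{(w)}$ are $\mu$ distinct selected indices, dominated by the $\mu$ largest ones $i_{j,1} > \cdots > i_{j,\mu}$.

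The step I expect to be the main obstacle is the reduction $\tfrac{\beta}{d}\sum_{r=1}^{\mu} i_{j,r} \le \sum_{r=1}^{\min(\nu_j,\alpha)} i_{j,r}$, and it splits according to the sign of $\alpha - d$. If $\alpha \ge d$ then $\beta = d$ and $\mu \le d \le \alpha$, so $\mu \le \min(\nu_j,\alpha)$ and the claim is immediate. If $\alpha < d$ then $\beta = \alpha$; when $\mu \le \min(\nu_j,\alpha)$ the factor $\alpha/d \le 1$ suffices, and when $\mu > \min(\nu_j,\alpha)$ one necessarily has $\min(\nu_j,\alpha) = \alpha$ (as $\mu \le \nu_j$), whereupon the elementary fact that for a non-increasing sequence the average of the top $\alpha$ terms is at least the average of the top $\mu$ terms gives $\tfrac{\alpha}{d}\sum_{r=1}^{\mu} i_{j,r} \le \tfrac{\mu}{d}\sum_{r=1}^{\alpha} i_{j,r} \le \sum_{r=1}^{\alpha} i_{j,r}$. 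In both regimes the per-block bound becomes $\beta \sum_{w=1}^d E^{(j)}_w \le \sum_{r=1}^{\min(\nu_j,\alpha)} i_{j,r} + \tfrac{(d-1)\beta}{2}$.

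Finally I would sum this over $j = 1,\ldots,s$, obtaining $\beta \sum_{u=1}^{sd} e_u \le \sum_{j}\sum_{r=1}^{\min(\nu_j,\alpha)} i_{j,r} + \tfrac{s(d-1)\beta}{2}$. Because the left-hand side and the double sum of the $i_{j,r}$ are integers, the correction may be replaced by $\lfloor s(d-1)\beta/2 \rfloor \le \lceil s(d-1)\beta/2\rceil$. Inserting the hypothesis $\sum_{j}\sum_{r} i_{j,r} \le \beta m - t = \beta(m-t') - \lceil s(d-1)\beta/2 \rceil$ then gives $\beta \sum_u e_u \le \beta(m-t')$, hence $\sum_{u=1}^{sd} e_u \le m - t'$. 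The digital $(t',m,sd)$-net property now forces the selected rows to be linearly independent, which completes the verification that $C^{(d)}_1,\ldots,C^{(d)}_s$ generate a digital $(t,\alpha,\min(\alpha,d),m,s)$-net with the claimed value of $t$.
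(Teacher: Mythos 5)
Your proof is correct, and its overall architecture coincides with the paper's: translate each selected row of $C^{(d)}_j$ back to a row of one of the underlying matrices $C_{(j-1)d+w}$, let $e_u$ denote the largest row index used in $C_u$, prove $e_1+\cdots+e_{sd}\le m-t'$, and invoke the $(t',m,sd)$-net property on the initial row segments. Your identity $d\,E^{(j)}_w = i^{(w)}+(d-w)$ together with $\sum_{\mathrm{used}\ w}(d-w)\le d(d-1)/2$ is exactly the paper's inequality (\ref{ungl}), just derived by bounding $\sum(d-w)$ from above instead of bounding $\sum_{l=1}^{L_j}l$ from below, so that part is only cosmetically different. Where you genuinely diverge is the case $\alpha<d$: the paper bounds the per-block sum by $i_{j,1}+\cdots+i_{j,\min(\nu_j,\alpha)}+(d-\alpha)\,i_{j,\min(\nu_j,\alpha)}$ and then must absorb the extra terms globally via the estimate $i_{1,\min(\nu_1,\alpha)}+\cdots+i_{s,\min(\nu_s,\alpha)}\le m-t/\alpha$, which it extracts from the hypothesis; you instead use the purely local averaging inequality $\sum_{r=1}^{\mu}i_{j,r}\le \frac{\mu}{\alpha}\sum_{r=1}^{\alpha}i_{j,r}$ (valid since the $i_{j,r}$ decrease and $\mu>\min(\nu_j,\alpha)$ forces $\min(\nu_j,\alpha)=\alpha$), so that each block is disposed of independently and the hypothesis is used only once, at the final summation. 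This buys robustness: the paper's global step is transparent only when every $\nu_j\ge\alpha$ (when some $\nu_j<\alpha$ one must observe separately that the extra term is unnecessary for that block), whereas your per-block argument covers all configurations uniformly. One minor remark: the integrality observation in your last step is superfluous, since $\lceil s(d-1)\beta/2\rceil\ge s(d-1)\beta/2$ already yields $\beta\sum_u e_u\le\beta(m-t')$ directly.
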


\begin{proof}
Let $C^{(d)}_j = (c_{j,1}^{(d)},\ldots, c_{j,m}^{(d)})^T$ for $j = 1,\ldots, s$ and further let the integers $i_{1,1},\ldots, i_{1,\nu_1},\ldots, i_{s,1},\ldots, i_{s,\nu_s}$ be such that $1 \le i_{j,\nu_j} < \cdots < i_{j,1}\le m$ and $$i_{1,1} + \cdots + i_{1,\min(\nu_1,\alpha)} + \cdots + i_{s,1} + \cdots +  i_{s,\min(\nu_s,\alpha)} \le \min(\alpha,d) m - t.$$ We need to show that the vectors  $$c^{(d)}_{1,i_{1,1}},  \ldots, c^{(d)}_{1,i_{1,\nu_1}}, \ldots, c^{(d)}_{s,i_{s,1}},  \ldots, c^{(d)}_{s,i_{s,\nu_s}}$$ are linearly independent over $R_b$.  For $j = 1,\ldots, s$ let  $U_j = \{c^{(d)}_{j,i_{j,\nu_j}},\ldots, c^{(d)}_{j,i_{j,1}}\}$. The vectors in the set $U_j$ stem from the matrices $C_{(j-1)d +1},\ldots, C_{jd}$. For $j = 1,\ldots, s$ and  $d_j = (j-1)d + 1,\ldots, jd$ let $e_{d_j}$ denote the largest index such that $(e_{d_j}-j)d + d_j \in \{i_{j,\nu_j},\ldots, i_{j,1}\}$ and if for some $d_j$ there is no such $e_{d_j}$ we set $e_{d_j}=0$ (basically this means $e_{d_j}$ is the largest integer such that $c_{d_j,e_{d_j}} \in U_j$).

Let $d \le \alpha$, then we have $d ((e_{(j-1)d + 1}-1)_+ + \cdots +
(e_{jd}-1)_+) + \sum_{l=1}^{L_j} l \le i_{j,1} + \cdots +
i_{j,\min(\nu_j,d)}$ where $(x)_+ = \max(x,0)$ and $L_j = |\{(j-1)d
+ 1 \le d_j \le jd: e_{d_j} > 0\}|$. Hence we have
\begin{eqnarray}\label{ungl}
\lefteqn{d ((e_{(j-1)d + 1}-1)_+ + \cdots + (e_{jd}-1)_+) +
\sum_{l=1}^{L_j} l} \nonumber \\ &= & d(e_{(j-1)d+1} + \cdots +
e_{jd}) - L_j d + L_j(L_j+1)/2 \nonumber \\ & \ge &d(e_{(j-1)d+1} +
\cdots + e_{jd}) - \frac{d(d-1)}{2}.
\end{eqnarray}
Thus it follows that $$d(e_1 + \cdots + e_{sd}) \le \sum_{j=1}^s (i_{j,1} + \cdots + i_{j,\min(\nu_j,\alpha)}) + s\frac{d(d-1)}{2} \le d m - t + s\frac{d(d-1)}{2}$$ and therefore $$e_1 + \cdots + e_{sd} \le m - \frac{t}{d} + s\frac{d-1}{2}\le m - t'.$$ Thus it follows from the $(t',m,sd)$-net property of the digital net generated by $C_1,\ldots, C_{sd}$ that the vectors $c^{(d)}_{1,i_{1,1}},  \ldots, c^{(d)}_{1,i_{1,\nu_1}}, \ldots, c^{(d)}_{s,i_{s,1}},  \ldots, c^{(d)}_{s,i_{s,\nu_s}}$ are linearly independent.

Let now $d > \alpha$. Then we have $d((e_{(j-1)d+1}-1)_+ + \cdots +
(e_{jd}-1)_+)) + \sum_{l=1}^{L_j} l \le i_{j,1} + \cdots +
i_{j,\min(\nu_j,\alpha)} + (d-\alpha)i_{j,\min(\nu_j,\alpha)}$,
where again $L_j = |\{(j-1)d + 1 \le d_j \le jd: e_{d_j} > 0\}|$.
Hence we can use inequality (\ref{ungl}) again.
% Hence we have
%\begin{eqnarray*}
%\lefteqn{d ((e_{(j-1)d + 1}-1)_+ + \cdots + (e_{jd}-1)_+) +
%\sum_{l=1}^{L_j} l} \\ &= & d(e_{(j-1)d+1} + \cdots + e_{jd}) - L_jd
%+ L_j(L_j+1)/2 \\ & \ge &d(e_{(j-1)d+1} + \cdots + e_{jd}) -
%\frac{d(d-1)}{2}.
%\end{eqnarray*}
Note that $i_{1,\min(\nu_1,\alpha)} + \cdots +
i_{s,\min(\nu_s,\alpha)} \le m - t/\alpha$ and hence we have
$$\sum_{j=1}^s (i_{j,1} + \cdots + i_{j,\min(\nu_j,\alpha)} +
(d-\alpha)i_{j,\min(\nu_j,\alpha)}) \le \alpha m - t + (d - \alpha)
(m-t/\alpha) = d m - d t/\alpha.$$ Thus it follows that
\begin{eqnarray*}
d(e_1 + \cdots + e_{sd})& \le & \sum_{j=1}^s (i_{j,1} + \cdots + i_{j,\min(\nu_j,\alpha)} + (d - \alpha) i_{j,\min(\nu_j,\alpha)}) + s\frac{d(d-1)}{2} \\ & \le & d m - \frac{d t}{\alpha} + s\frac{d(d-1)}{2}
\end{eqnarray*}
and therefore $$e_1 + \cdots + e_{sd} \le m - \frac{t}{\alpha} + s \frac{d-1}{2} \le m - t'.$$ Thus it follows from the $(t',m,sd)$-net property of the digital net generated by $C_1,\ldots, C_{sd}$ that the vectors  $c^{(d)}_{1,i_{1,1}},  \ldots, c^{(d)}_{1,i_{1,\nu_1}}, \ldots, c^{(d)}_{s,i_{s,1}},  \ldots, c^{(d)}_{s,i_{s,\nu_s}}$  are linearly independent and hence the result follows.
\end{proof}

In Section~\ref{sec_example} we use this construction method to
construct a digital $(3,2,4,2)$-net over $\integer_2$.

Note that the construction and Theorem~\ref{th_talphabeta} can
easily be extended to $(t,\alpha,\beta,s)$-sequences. Indeed, let $d
\ge 1$ and let $C_1,\ldots, C_{sd}$ be the generating matrices of a
digital $(t,sd)$-sequence. Again many explicit generating matrices
are known, see for example \cite{faure, niesiam, NX, sob67}. Let
$C_j = (c_{j,1},c_{j,2},\ldots)^T$ for $j = 1,\ldots,sd$, i.e.,
$c_{j,l}$ are the row vectors of $C_j$. Now let the matrix
$C^{(d)}_{j}$ be made of the first rows of the matrices $C_{(j-1)d +
1},\ldots, C_{jd}$, then the second rows of $C_{(j-1)d+1},\ldots,
C_{jd}$ and so on, i.e., $$C^{(d)}_{j} = (c_{(j-1)d+1,1},\ldots,
c_{jd,1},c_{(j-1)d+1,2},\ldots,c_{jd,2},\ldots)^T.$$ The following
theorem states that the matrices $C^{(d)}_{1},\ldots, C^{(d)}_{s}$
are the generating matrices of a digital
$(t,\alpha,\min(\alpha,d),s)$-sequence.

\begin{theorem}\label{th_talphabetaseq}
Let $d \ge 1$ be a natural number and let $C_{1},\ldots, C_{sd}$ be
the generating matrices of a digital $(t',sd)$-sequence over some
ring $R_b$ with $b$ elements. Let $C^{(d)}_{1},\ldots, C^{(d)}_{s}$
be defined as above. Then for any $\alpha \ge 1$ the matrices
$C^{(d)}_{1},\ldots, C^{(d)}_{s}$ are generating matrices of a
digital $(t,\alpha,\min(\alpha,d),s)$-sequence over $R_b$ with $$t =
\min(\alpha,d)\;t' + \left\lceil \frac{s(d-1)
\min(\alpha,d)}{2}\right\rceil.$$
\end{theorem}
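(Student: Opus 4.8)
The plan is to reduce the statement entirely to the already established net result, Theorem~\ref{th_talphabeta}, by invoking the definition of a digital sequence separately for each truncation level $m$. Set $\beta = \min(\alpha,d)$ and let $t$ be the value claimed in the statement. By Definition~\ref{def_seq}, to show that $C^{(d)}_1,\ldots,C^{(d)}_s$ generate a digital $(t,\alpha,\beta,s)$-sequence it suffices to show that for every integer $m > t/\beta$ the left upper $m\times m$ submatrices $C^{(d)}_{1,m},\ldots,C^{(d)}_{s,m}$ generate a digital $(t,\alpha,\beta,m,s)$-net. So the entire proof amounts to producing, for each such $m$, a net of the required quality, and Theorem~\ref{th_talphabeta} is exactly the tool that does this once it is fed an appropriate underlying $(t',m,sd)$-net.

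The key step I would verify carefully — and the one place the argument can slip — is that truncation to the upper-left $m\times m$ block commutes with the row-interleaving construction. Concretely, I claim that $C^{(d)}_{j,m}$ coincides with the matrix obtained by applying the interleaving recipe of Theorem~\ref{th_talphabeta} to the truncated matrices $C_{1,m},\ldots,C_{sd,m}$ in place of $C_1,\ldots,C_{sd}$. To see this, note that row $l$ of the infinite matrix $C^{(d)}_j$ is row $v$ of $C_{(j-1)d+a}$, where $l=(v-1)d+a$ with $1\le a\le d$; restricting to the first $m$ rows therefore uses only the row-levels $v=1,\ldots,\lceil m/d\rceil$, and since $\lceil m/d\rceil \le m$ for $d\ge 1$, every such row already appears in the corresponding submatrix $C_{(j-1)d+a,m}$. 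Truncating each of these rows to its first $m$ entries produces exactly the rows that the net construction of Theorem~\ref{th_talphabeta} assembles from $C_{1,m},\ldots,C_{sd,m}$, so the two $m\times m$ matrices agree. This is the main obstacle because the sequence and net versions define $C^{(d)}_j$ by superficially different procedures (building directly to finite size versus building an infinite matrix and then truncating), and the whole reduction hinges on these giving the same block.

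With the compatibility identity in hand I would close the argument quickly. Since $C_1,\ldots,C_{sd}$ generate a digital $(t',sd)$-sequence, Definition~\ref{def_seq} (with $\alpha=\beta=1$) gives that $C_{1,m},\ldots,C_{sd,m}$ generate a digital $(t',m,sd)$-net for every $m>t'$. For every integer $m>t/\beta$ we have $m>t'$, because $t/\beta = t' + \lceil s(d-1)\beta/2\rceil/\beta \ge t'$; hence the truncated matrices form a $(t',m,sd)$-net. Applying Theorem~\ref{th_talphabeta} to this net, together with the identification $C^{(d)}_{j,m}=$ the interleaved matrix of $C_{1,m},\ldots,C_{sd,m}$, shows that $C^{(d)}_{1,m},\ldots,C^{(d)}_{s,m}$ generate a digital $(t,\alpha,\beta,m,s)$-net with precisely the claimed value of $t$. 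As this holds for all $m>t/\beta$, Definition~\ref{def_seq} yields that $C^{(d)}_1,\ldots,C^{(d)}_s$ generate a digital $(t,\alpha,\min(\alpha,d),s)$-sequence, completing the proof.
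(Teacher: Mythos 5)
Your proof is correct and is exactly the argument the paper intends: the paper states this theorem without proof, remarking only that the net construction and Theorem~\ref{th_talphabeta} ``can easily be extended'' to sequences, and your reduction via Definition~\ref{def_seq} --- checking that truncation to the upper-left $m\times m$ block commutes with the interleaving, and that $m > t/\beta$ forces $m > t'$ so the underlying truncations are $(t',m,sd)$-nets --- is precisely the formalization of that remark. Nothing is missing; you have simply written out the details the paper omits.
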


The last result shows that $(t,\alpha,\beta,m,s)$-nets indeed exist for any $0 < \beta \le \alpha$ and for $m$ arbitrarily large. We have even shown that digital $(t,\alpha,\beta,m,s)$-nets exist which are extensible in $m$ and $s$. This can be achieved by using an underlying $(t',sd)$-sequence which is itself extensible in $m$ and $s$. If the $t'$ value of the original $(t',m,s)$-net or $(t',s)$-sequence is known explicitly then we also know the $t$ value of the digital $(t,\alpha,\beta,m,s)$-net or $(t,\alpha,\beta,s)$-sequence. Furthermore it has also been shown how such digital nets can be constructed in practise.

In the following we investigate for which values of $t,\alpha,s,b$ digital $(t,\alpha,s)$-sequences over $\integer_b$ exist. We need some further notation (see also \cite{NX2}, Definition~8.2.15).
\begin{definition}\rm
For given integers $s,\alpha \ge 1$ and prime number $b$ let $d_b(s,\alpha)$ be the smallest value of $t$ such that a $(t,\alpha,s)$-sequence over $\integer_b$ exists.
\end{definition}
We have the following bound on $d_b(s,\alpha)$.

\begin{corollary}\label{cor_tbound}
Let $s,\alpha \ge 1$ be integers and $b$ be a prime number. Then we have
\begin{eqnarray*}
\lefteqn{ \alpha \left(\frac{s}{b} -1 - \log_b \frac{(b-1)s+b+1}{2}\right) +1 } \qquad\qquad \\ & \le & d_b(s,\alpha) \;\; \le \;\; \alpha (s-1) \frac{3b-1}{b-1} - \alpha \frac{(2b+4)\sqrt{s-1}}{\sqrt{b^2-1}} + 2 \alpha + s \frac{\alpha (\alpha-1)}{2}.
\end{eqnarray*}
\end{corollary}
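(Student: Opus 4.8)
The plan is to derive both bounds from the construction results already established, combined with known existence/nonexistence bounds for ordinary digital $(t,s)$-sequences over $\integer_b$. The upper bound is the easier direction: I would invoke Theorem~\ref{th_talphabetaseq} with $d = \alpha$ (so that $\min(\alpha,d) = \alpha$), which takes a digital $(t',s\alpha)$-sequence over $\integer_b$ and produces a digital $(t,\alpha,s)$-sequence over $\integer_b$ with $t = \alpha t' + \lceil s(\alpha-1)\alpha/2\rceil$. Thus $d_b(s,\alpha) \le \alpha\, t' + s\alpha(\alpha-1)/2$, where $t'$ is the best known $t$-value for a digital $(t',s\alpha)$-sequence over $\integer_b$. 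I would then substitute the standard explicit bound on $t'$ for such sequences (the Niederreiter--Xing type bound, giving $t' \le (s\alpha-1)\frac{3b-1}{b-1} - \frac{(2b+4)\sqrt{s\alpha-1}}{\sqrt{b^2-1}} + 2$ or the closely related form); the quoted right-hand side should emerge after replacing the ``number of coordinates'' $s$ in the underlying sequence bound by $s\alpha$ and simplifying, keeping the additive $s\alpha(\alpha-1)/2$ term from the construction. The arithmetic here is routine; the only care needed is matching the form of the ambient-dimension sequence bound precisely so that the $\sqrt{s-1}$ (rather than $\sqrt{s\alpha-1}$) appears as stated.

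For the lower bound I would argue by contraposition using the propagation rule in Theorem~\ref{th_prop}(iii): if a digital $(t,\alpha,s)$-sequence over $\integer_b$ exists, then it is in particular a digital $(\lceil t/\alpha\rceil,1,s)$-sequence, i.e. an ordinary digital $(\lceil t/\alpha\rceil,s)$-sequence over $\integer_b$. Therefore $\lceil t/\alpha\rceil \ge d_b(s,1)$, where $d_b(s,1)$ is the minimal $t$-value of an ordinary digital $(t,s)$-sequence over $\integer_b$. This forces $t \ge \alpha(d_b(s,1)-1)+1$ at minimum (since $\lceil t/\alpha\rceil \ge d_b(s,1)$ gives $t > \alpha(d_b(s,1)-1)$). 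I would then insert the known lower bound on $d_b(s,1)$ for ordinary sequences, namely the bound of the form $d_b(s,1) \ge \frac{s}{b} - \log_b\frac{(b-1)s+b+1}{2}$ (the Niederreiter--Xing lower bound on the minimal $t$-value of a digital sequence), and combine to obtain the stated left-hand side $\alpha\bigl(\frac{s}{b}-1-\log_b\frac{(b-1)s+b+1}{2}\bigr)+1$.

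The main obstacle I anticipate is not the logical structure, which is a clean two-step reduction in each direction, but rather the bookkeeping needed to reconcile my derived expressions with the exact constants in the statement. In particular I must quote the correct external bounds for ordinary digital sequences over $\integer_b$ (both the explicit upper bound that yields the $\frac{3b-1}{b-1}$ and $\frac{2b+4}{\sqrt{b^2-1}}$ coefficients, and the lower bound producing the logarithmic term), and verify that applying Theorem~\ref{th_talphabetaseq} with the ambient dimension $s\alpha$ reproduces the $\sqrt{s-1}$ term exactly as written rather than $\sqrt{s\alpha-1}$; this suggests the intended underlying bound is stated per the original $s$ coordinates with an extra factor absorbed, so I would check that substitution carefully. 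Everything else---the ceiling manipulations, the $+2\alpha$ and $+s\alpha(\alpha-1)/2$ terms, and the contrapositive inequality---follows directly from Theorems~\ref{th_prop} and~\ref{th_talphabetaseq} with no further ideas required.
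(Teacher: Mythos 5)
Your proposal is precisely the paper's proof, which consists of exactly the two reductions you describe: the lower bound from part (iii) of Theorem~\ref{th_prop} with $\alpha'=1$ combined with the Niederreiter--Xing lower bound on the $t$-value of ordinary digital $(t,s)$-sequences from \cite{NX}, and the upper bound from Theorem~\ref{th_talphabetaseq} with $d=\alpha$ combined with Theorem~8.4.4 of \cite{NX2}. Your ceiling argument for the lower bound ($\lceil t/\alpha\rceil \ge d_b(s,1)$, hence $t \ge \alpha(d_b(s,1)-1)+1$, then substitution of the bound $d_b(s,1) \ge \tfrac{s}{b} - \log_b\tfrac{(b-1)s+b+1}{2}$) fills in the details correctly and reproduces the stated left-hand side exactly.

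The caveat you flagged on the upper bound is the one substantive issue, and you should know that it is a defect of the corollary as printed rather than of your reasoning. Applying Theorem~\ref{th_talphabetaseq} with $d=\alpha$ consumes a digital $(t',s\alpha)$-sequence, so the external bound from \cite{NX2} must be invoked in dimension $s\alpha$; the honest substitution therefore yields $d_b(s,\alpha) \le \alpha(s\alpha-1)\frac{3b-1}{b-1} - \alpha\frac{(2b+4)\sqrt{s\alpha-1}}{\sqrt{b^2-1}} + 2\alpha + s\frac{\alpha(\alpha-1)}{2}$, with $s\alpha-1$ where the corollary prints $s-1$. Since that external bound is increasing in the dimension, the printed right-hand side is strictly stronger (for $\alpha\ge 2$) than what this method delivers, so there is no careful bookkeeping or ``absorbed factor'' that recovers the stated constants; what you can legitimately conclude is the weaker bound with $s\alpha-1$ and $\sqrt{s\alpha-1}$, and the discrepancy should be attributed to the paper, whose two-line proof evidently applied the dimension-$s$ bound where dimension $s\alpha$ is required.
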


\begin{proof}
The lower bound follows from part $(iii)$ of Theorem~\ref{th_prop} by choosing $\alpha' = 1$ and using a lower bound on the $t$-value for $(t,s)$-sequences (see \cite{NX}).  The upper bound follows from Theorem~\ref{th_talphabetaseq} by choosing $d = \alpha$ and using Theorem~8.4.4 of \cite{NX2}.
\end{proof}

\section{A bound on the worst-case error in $\HH_\alpha$ for digital $(t,\alpha,\beta,m,s)$-nets and digital $(t,\alpha,\beta,s)$-sequences}\label{sec_bound}

In this section we prove an upper bound on the worst-case error for integration in the Korobov space $\HH_\alpha$ using digital $(t,\alpha,\beta,m,s)$-nets and $(t,\alpha,\beta,s)$-sequences.

\begin{lemma}\label{lem_Q}
Let $\alpha \ge 2$ be a natural number, let $b$ be prime and let $C_1,\ldots, C_s \in \integer_b^{m \times m}$ be the generating matrices of a digital $(t,\alpha,\beta,m,s)$-net over $\integer_b$ with $m > t/\beta$. Then we have
\begin{eqnarray*}
Q^\ast_{b,m,u,\alpha}(C_1,\ldots, C_s) \le 2b^{|u|\alpha} b^{-\beta m + t} (\beta m + 2)^{|u|\alpha - 1},
\end{eqnarray*}
where $Q^\ast_{b,m,u,\alpha}$ is defined in Lemma~\ref{lem_q}.
\end{lemma}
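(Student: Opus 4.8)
The goal is to bound
$$Q^\ast_{b,m,u,\alpha}(C_1,\ldots,C_s) = \sum_{\bsk \in \D^\ast_{b^m,u}} q_{b,\alpha}(\bsk),$$
where $\D^\ast_{b^m,u} = \D_u \cap \{1,\ldots,b^m-1\}^{|u|}$ and $q_{b,\alpha}(\bsk) = b^{-\mu_{b,\alpha}(\bsk)}$. The plan is to group the sum according to the value $w = \mu_{b,\alpha}(\bsk)$, so that
$$Q^\ast_{b,m,u,\alpha} = \sum_{w} b^{-w}\, N_u(w), \quad N_u(w) = |\{\bsk \in \D^\ast_{b^m,u}: \mu_{b,\alpha}(\bsk) = w\}|.$$
Each $\bsk\in\D^\ast_{b^m,u}$ has all components strictly between $0$ and $b^m-1$, so for every $j\in u$ the digit vector $\vec{k}_j$ lives in $\integer_b^m$ and the defining relation is $\sum_{j\in u} C_j^T\vec{k}_j = \vec{0}$. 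First I would record that, by the defining metric, $\mu_{b,\alpha}(\bsk)=\sum_{j\in u}\mu_{b,\alpha}(k_j)$ and that each one-dimensional $\mu_{b,\alpha}(k_j)=a_{j,1}+\cdots+a_{j,\min(\nu_j,\alpha)}$ is a sum of at most $\alpha$ distinct positive digit-position indices.

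\textbf{Using the net property as a lower bound on $\mu_{b,\alpha}$.} The key structural input is the Remark following Definition~\ref{def_net}: for a digital $(t,\alpha,\beta,m,s)$-net the dual net satisfies $\min_{\bsk\in\D}\mu_{b,\alpha}(\bsk) > \beta m - t$. Hence $N_u(w) = 0$ whenever $w \le \beta m - t$, so the sum starts at $w = \lfloor \beta m - t\rfloor + 1$, giving the decisive factor $b^{-\beta m + t}$ once I pull the leading $b^{-w}$ out. The remaining work is to control how fast $N_u(w)$ can grow with $w$ so that $\sum_w b^{-w}N_u(w)$ converges with only a polynomial-in-$m$ overhead.

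\textbf{Counting the $\bsk$ with a given $\mu$-value.} This is the main obstacle. For each $j\in u$, a value $k_j$ with $\mu_{b,\alpha}(k_j)=w_j$ is determined by choosing the nonzero digit positions $a_{j,1}>\cdots$ and the digit values in $\{1,\ldots,b-1\}$; when $\nu_j>\alpha$ only the top $\alpha$ positions contribute to $\mu$, so $k_j$ can have arbitrarily many lower-order nonzero digits subject only to position $<$ the $\alpha$th largest recorded index (bounded by $\le m$). The plan is to bound $N_u(w)$ crudely: the number of ways to write $w$ as an ordered sum of at most $|u|\alpha$ positive integers (the recorded positions across all $j\in u$), times a factor $b^{|u|\alpha}$ absorbing the digit-value choices in $\{1,\ldots,b-1\}$ and the freedom in the untracked low-order digits (each position bounded above by $m$, hence at most $b^m$ but compensated by the $b^{-w}$ weight through the net bound). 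I would show the number of compositions of $w$ into at most $|u|\alpha$ parts is at most $\binom{w}{|u|\alpha-1}\le (\beta m+2)^{|u|\alpha-1}$ once $w$ is bounded by a constant multiple of $m$. Combining, $N_u(w)\le b^{|u|\alpha}(\beta m+2)^{|u|\alpha-1}$ for each relevant $w$, and summing $b^{-w}$ over $w > \beta m - t$ gives a convergent geometric series $\sum_{w>\beta m-t} b^{-w} = b^{-\beta m + t}\,\frac{1}{b-1}\cdot b$, whose prefactor is absorbed into the overall constant $2$. Assembling the pieces yields
$$Q^\ast_{b,m,u,\alpha}(C_1,\ldots,C_s) \le 2\, b^{|u|\alpha}\, b^{-\beta m + t}\,(\beta m + 2)^{|u|\alpha - 1}.$$
The delicate point I expect to fight with is making the combinatorial count $N_u(w)$ honest: the low-order digits below the $\alpha$th recorded position are genuinely free, so I must verify that their contribution, weighted by $q_{b,\alpha}$, is already captured by the earlier estimate $\sum_{l\ge 1}q_{b,\alpha}(l)\le \alpha+b^{-2}$ used in Lemma~\ref{lem_q}, rather than recounting them here; getting the bookkeeping of $|u|\alpha$ versus $|u|\alpha-1$ exponents exactly right is where the constants $b^{|u|\alpha}$ and $(\beta m+2)^{|u|\alpha-1}$ come from.
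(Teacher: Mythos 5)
There is a genuine gap, and it sits exactly at the point you flag as ``delicate.'' Your proposed uniform bound $N_u(w) \le b^{|u|\alpha}(\beta m + 2)^{|u|\alpha-1}$ is false. The set $\D^\ast_{b^m,u}$ is (up to boundary effects) the nonzero part of the solution set of $m$ linear equations in $|u|m$ unknowns over $\integer_b$, so it has on the order of $b^{(|u|-1)m}$ elements, while $\mu_{b,\alpha}$ takes at most $|u|\alpha m$ distinct values on it; by pigeonhole some $N_u(w)$ is at least of order $b^{(|u|-1)m}/(|u|\alpha m)$, exponentially large in $m$. The reason is the one you yourself identify: when $\nu_j > \alpha$, the digits of $k_j$ below the $\alpha$-th largest nonzero position do not enter $\mu_{b,\alpha}(k_j)$, so exponentially many $k_j$ share a single $\mu$-value. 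And this freedom is \emph{not} ``already captured'' by the estimate $\sum_{l\ge1} q_{b,\alpha}(l) \le \alpha + b^{-2}$ from Lemma~\ref{lem_q}: that estimate controls the digits beyond position $m$ (the $\bsl$ in the decomposition $\bsk = \bsh + b^m\bsl$), which are genuinely unconstrained by the dual-net condition. The low-order digits inside positions $1,\ldots,m$ are constrained by $\sum_{j\in u} C_j^T\vec{k}_j = \vec{0}$, and counting how many completions of prescribed leading digits satisfy this constraint is the actual content of the lemma.

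This is where the paper's proof makes a second, essential use of the net property, which your proposal never invokes. The paper organizes the sum not by $w = \mu_{b,\alpha}(\bsk)$ but by the tuple of leading positions $(i_{j,1},\ldots,i_{j,\alpha})_{j}$: for fixed positions and fixed leading digits $\kappa_{j,l}$, the admissible low-order digits solve a linear system $B\vec{l} = \vec{g}$, where the columns of $B$ are the rows $c_{j,1},\ldots,c_{j,i_{j,\alpha}-1}$ of the generating matrices. The $(t,\alpha,\beta,m,s)$-net property forces these columns to be linearly independent whenever $i_{1,\alpha}+\cdots+i_{s,\alpha} \le (\beta m - t)/\alpha$, giving at most one solution, and a rank argument bounds the number of solutions by $b^{i_{1,\alpha}+\cdots+i_{s,\alpha}-\lfloor(\beta m - t)/\alpha\rfloor}$ otherwise. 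In that second regime the count \emph{is} exponential, and it is only beaten because the weight $q_{b,\alpha}(\bsk)$ contains the factor $b^{-(i_{1,\alpha}+\cdots+i_{s,\alpha})}$; making this cancellation work requires tracking $i_{1,\alpha}+\cdots+i_{s,\alpha}$ separately from the total $w$, i.e., the sum must be stratified by the position pattern, and the two regimes (the sums $S_1$ and $S_2$ in the paper) bounded separately with the binomial-coefficient counting you sketch. Your grouping by $w$ alone discards exactly the information needed to run this argument, so the proof cannot be completed along the proposed lines without reintroducing the paper's stratification and linear-algebra count.
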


\begin{proof}
We obtain a bound on $Q^\ast_{b,m,\{1,\ldots, s\},\alpha}$, for all
other subsets $u$ the bound can be obtained using the same
arguments.

We first partition the set $\D^\ast_{b^m,\{1,\ldots, s\}}$ into parts where the highest digits of $k_j$ are prescribed and we count the number of solutions of $C_1^T\vec{k}_1 + \cdots + C_s^T \vec{k}_s = \vec{0}$.
For $j = 1,\ldots, s$ let now $i_{j,\alpha} < \cdots < i_{j,1} \le m$ with $i_{j,1} \ge 1$.  Note that we now allow $i_{j,l} < 1$, in which case the contributions of those $i_{j,l}$ are to be ignored. This notation is adopted in order to avoid considering many special cases. Now we define
\begin{eqnarray*}
\lefteqn{ \D^\ast_{b^m,\{1,\ldots, s\}}(i_{1,1},\ldots, i_{1,\alpha},\ldots, i_{s,1},\ldots, i_{s,\alpha}) } \\ &=& \{\bsk \in \D^\ast_{b^m,\{1,\ldots, s\}}: k_j = \lfloor \kappa_{j,1} b^{i_{j,1}-1} + \cdots + \kappa_{j,\alpha}b^{i_{j,\alpha}-1} + l_j\rfloor  \mbox{ with } 0 \le l_j < b^{i_{j,\alpha}-1} \\ && \mbox{and } 1 \le \kappa_{j,l} < b \mbox{ for } j = 1,\ldots, s\},
\end{eqnarray*}
where $\lfloor\cdot \rfloor$ just means that the contributions of $i_{j,l} < 1$ are to be ignored. Then we have
\begin{eqnarray}\label{sum_Qstar}
\lefteqn{ Q^\ast_{b,m,\{1,\ldots, s\},\alpha}(C_1,\ldots, C_s) } \nonumber \\ & = & \sum_{i_{1,1}=1}^m \cdots \sum_{i_{1,\alpha}= 1}^{i_{1,\alpha-1}-1} \cdots \sum_{i_{s,1}=1}^m \cdots \sum_{i_{s,\alpha}=1}^{i_{s,\alpha-1}-1} \frac{| \D^\ast_{b^m,\{1,\ldots, s\}}(i_{1,1},\ldots, i_{1,\alpha},\ldots, i_{s,1},\ldots, i_{s,\alpha})|}{b^{i_{1,1}+ \cdots + i_{1,\alpha} + \cdots + i_{s,1} + \cdots  + i_{s,\alpha}}}.
\end{eqnarray}
Some of the sums above can be empty in which case we just set the corresponding summation index $i_{j,l}=0$.

Note that by the $(t,\alpha,\beta,m,s)$-net property we have $$|\D^\ast_{b^m,\{1,\ldots, s\}}(i_{1,1},\ldots, i_{1,\alpha},\ldots, i_{s,1},\ldots, i_{s,\alpha})| = 0$$ as long as $i_{1,1} + \cdots + i_{1,\alpha} + \cdots + i_{s,1} + \cdots + i_{s,\alpha} \le \beta m - t$. Hence let now $0\le i_{1,1}, \ldots, i_{s,\alpha} \le m$ be given such that $i_{1,1},\ldots, i_{s,1}\ge 1$, $i_{j,\alpha}< \cdots < i_{j,1} \le m$ for $j = 1,\ldots, s$ and where if $i_{j,l} < 1$ we set $i_{j,l}=0$  and  $i_{1,1} + \cdots + i_{1,\alpha} + \cdots + i_{s,1} + \cdots  + i_{s,\alpha} > \beta m - t$. We now need to estimate $|\D^\ast_{b^m,\{1,\ldots, s\}}(i_{1,1},\ldots, i_{1,\alpha},\ldots, i_{s,1},\ldots, i_{s,\alpha})|$, that is we need to count the number of $\bsk \in \D^\ast_{b^m,\{1,\ldots, s\}}$ with $k_j = \lfloor \kappa_{j,1} b^{i_{j,1}-1} + \cdots + \kappa_{j,\alpha}b^{i_{j,\alpha}-1} + l_j\rfloor$ such that $C_1^T \vec{k}_1 + \cdots + C_s^T\vec{k}_s = \vec{0}$.

There are at most $(b-1)^{\alpha s}$ choices for $\kappa_{1,1},\ldots, \kappa_{s,\alpha}$ (we write at most because if $i_{j,l} < 1$ then the corresponding $\kappa_{j,l}$ does not have any effect and therefore need not to be included). Let now $1\le \kappa_{1,1},\ldots, \kappa_{s,\alpha} < b$ be given and define $$\vec{g} = \kappa_{1,1} c_{1,i_{1,1}}^T + \cdots + \kappa_{1,\alpha} c_{1,i_{1,\alpha}}^T + \cdots + \kappa_{s,1} c_{s,i_{s,1}}^T + \cdots + \kappa_{s,\alpha} c_{s,i_{s,\alpha}}^T,$$ where we set $c^T_{j,l}=0$ if $l < 1$. Further let $$B = (c_{1,1}^T,\ldots, c_{1,i_{1,\alpha}-1}^T,\ldots, c_{s,1}^T,\ldots, c_{s,i_{s,\alpha}-1}^T).$$ Now the task is to count the number of solutions $\vec{l}$ of $B \vec{l} = \vec{g}$. As long as the columns of $B$ are linearly independent the number of solutions can at most be $1$. By the $(t,\alpha,\beta,m,s)$-net property this is certainly the case if (we write $(x)_+ = \max(x,0)$)
\begin{eqnarray*}
\lefteqn{ (i_{1,\alpha}-1)_+ + \cdots + (i_{1,\alpha}-\alpha)_+ + \cdots + (i_{s,\alpha}-1)_+ + \cdots + (i_{s,\alpha}-\alpha)_+ } \qquad\qquad\qquad\qquad\qquad\qquad\qquad\qquad\qquad\qquad\qquad\qquad \\  &\le & \alpha (i_{1,\alpha} + \cdots + i_{s,\alpha})  \\ & \le & \beta m - t,
\end{eqnarray*}
that is, as long as $$i_{1,\alpha} + \cdots + i_{s,\alpha} \le \frac{\beta m - t}{\alpha}.$$

Let now $i_{1,\alpha} + \cdots + i_{s,\alpha} > \frac{\beta m - t}{\alpha}$. Then by considering the rank of the matrix $B$ and the dimension of the space of solutions of $B\vec{l} = \vec{0}$ it follows the number of solutions of $B\vec{l} = \vec{g}$ is smaller or equal to $b^{i_{1,\alpha} + \cdots + i_{s,\alpha}-\lfloor(\beta m - t)/\alpha\rfloor}$. Thus we have
\begin{eqnarray*}
\lefteqn{ |\D^\ast_{b^m,\{1,\ldots, s\}}(i_{1,1},\ldots, i_{1,\alpha},\ldots, i_{s,1},\ldots, i_{s,\alpha})| }\qquad\qquad\qquad\qquad\qquad\qquad\qquad\qquad\qquad\qquad\qquad\qquad\qquad\qquad\qquad\qquad \\ \le \left\{\begin{array}{ll} 0 & \mbox{if } \sum_{j=1}^s \sum_{l = 1}^\alpha i_{j,l}  \le \beta m -t, \\ (b-1)^{\alpha s} & \mbox{if } \sum_{j=1}^s \sum_{l=1}^\alpha i_{j,l}  > \beta m -t \\ & \mbox{ and } \sum_{j=1}^s i_{j,\alpha} \le \frac{\beta m -t}{\alpha}, \\ (b-1)^{\alpha s} b^{i_{1,\alpha} + \cdots + i_{s,\alpha} - \lfloor (\beta m -t)/\alpha\rfloor} & \mbox{if } \sum_{j=1}^s\sum_{l=1}^\alpha i_{j,l} > \beta m -t \\ & \mbox{ and } \sum_{j=1}^s i_{j,\alpha} > \frac{\beta m -t}{\alpha}.  \end{array} \right.
\end{eqnarray*}

We estimate the sum (\ref{sum_Qstar}) now. Let $S_1$ be the sum in
(\ref{sum_Qstar}) where $ i_{1,1} + \cdots + i_{s,\alpha} > \beta m
-t$ and $i_{1,\alpha} + \cdots + i_{s,\alpha} \le \frac{\beta m
-t}{\alpha}$. For an $l > \beta m - t$ let $A_1(l)$ denote the
number of admissible choices of $i_{1,1},\ldots,i_{s,\alpha}$ such
that $l = i_{1,1} + \cdots + i_{s,\alpha}$. Then we have $$S_1 =
(b-1)^{\alpha s} \sum_{l= \beta m -t +1}^{\alpha s m}
\frac{A_1(l)}{b^{l}}.$$ We have $A_1(l) \le {l+s\alpha -1 \choose
s\alpha - 1}$ and hence we obtain $$S_1 \le (b-1)^{s\alpha} \sum_{l
= \beta m - t+1}^{\infty} {l+s\alpha - 1 \choose s \alpha -1}
\frac{1}{b^{l}} \le b^{s\alpha}b^{-\beta m + t - 1}{\beta m - t +
s\alpha \choose s \alpha - 1},$$ where the last inequality follows
from a result by Matou\~sek~\cite[Lemma~2.18]{matou}, see also
\cite[Lemma~6]{DP05}.

Let $S_2$ be the part of (\ref{sum_Qstar}) for which $ i_{1,1} + \cdots + i_{s,\alpha} > \beta m -t$ and $i_{1,\alpha} + \cdots + i_{s,\alpha} > \frac{\beta m -t}{\alpha}$, i.e., we have
\begin{eqnarray}\label{sum_S2}
S_2 & =& (b-1)^{s\alpha} \sum_{i_{1,1}=1}^m \cdots \sum_{i_{1,\alpha}= 1}^{i_{1,\alpha-1}-1} \cdots \sum_{i_{s,1}=1}^m \cdots \sum_{i_{s,\alpha}=1}^{i_{s,\alpha-1}-1} \frac{b^{- \lfloor (\beta m - t)/\alpha\rfloor}}{b^{i_{1,1}+ \cdots + i_{1,\alpha-1} + \cdots + i_{s,1} + \cdots  + i_{s,\alpha-1}}} \nonumber \\ & \le & \frac{m^s (b-1)^{s\alpha}}{b^{\lfloor( \beta m - t)/\alpha \rfloor}} \sum_{i_{1,1}=1}^m \cdots \sum_{i_{1,\alpha-1}= 1}^{i_{1,\alpha-2}-1} \cdots \sum_{i_{s,1}=1}^m \cdots \sum_{i_{s,\alpha-1}=1}^{i_{s,\alpha-2}-1} \frac{1}{b^{i_{1,1}+ \cdots + i_{1,\alpha-1} + \cdots + i_{s,1} + \cdots  + i_{s,\alpha-1}}}
\end{eqnarray}
where in the first line above we have the additional conditions $ i_{1,1} + \cdots + i_{s,\alpha} > \beta m -t$ and $i_{1,\alpha} + \cdots + i_{s,\alpha} > \frac{\beta m -t}{\alpha}$. From the last inequality and $i_{1,\alpha-l} + \cdots + i_{s,\alpha - l} > i_{1,\alpha} + \cdots + i_{s,\alpha}$ for $l=1,\ldots, \alpha - 1$ it follows that $i_{1,1} + \cdots + i_{1,\alpha-1} + \cdots + i_{s,1} + \cdots + i_{s,\alpha - 1} \ge \lfloor (\beta m - t) (1 - \alpha^{-1})\rfloor +1$. Let $A_{2}(l)$ denote the number of admissible choices of $i_{1,1},\ldots, i_{1,\alpha-1},\ldots, i_{s,1},\ldots,i_{s,\alpha-1}$ such that $l = i_{1,1} + \cdots + i_{1,\alpha-1} + \cdots + i_{s,1} + \cdots +  i_{s,\alpha-1}$. Note that we have $A_{2}(l) \le {l + s (\alpha - 1) -1 \choose s(\alpha - 1)-1}$. Then we have
\begin{eqnarray*}
S_{2} & \le & \frac{m^s (b-1)^{s\alpha}}{b^{\lfloor( \beta m - t)/\alpha \rfloor}} \sum_{l = \lfloor (\beta m - t)(1 - \alpha^{-1})\rfloor + 1}^{\infty} {l + s(\alpha - 1)-1\choose s(\alpha - 1) - 1} \frac{1}{b^{l}} \\ & \le & \frac{m^s (b-1)^{s\alpha}}{b^{\lfloor( \beta m - t)/\alpha \rfloor}} \frac{b^{ \lceil (\beta m - t)/\alpha\rceil}}{(1-b^{-1})^{s(\alpha-1)}  b^{\beta m - t + 1}} { \lfloor (\beta m - t)(1 - \alpha^{-1})\rfloor + s (\alpha -1) \choose s(\alpha - 1) - 1},
\end{eqnarray*}
where the last inequality follows again from a result by
Matou\~sek~\cite[Lemma~2.18]{matou}, see also \cite[Lemma~6]{DP05}.
Hence we have
\begin{equation*}
S_{2} \le m^s b^{s\alpha} b^{-\beta m + t}{ \lfloor (\beta m - t)(1 - \alpha^{-1})\rfloor + s (\alpha -1) \choose s(\alpha - 1) - 1}.
\end{equation*}

Note that we have $Q^\ast_{b,m,\alpha,\{1,\ldots, s\}}(C_1,\ldots, C_s) = S_1 + S_2$. Let $a \ge 1$ and $b\ge 0$ be integers then we have $${a+b \choose b} = \prod_{i=1}^b \left(1 + \frac{a}{i}\right) \le (1+a)^b.$$ Therefore we obtain $S_1 \le b^{s\alpha}b^{-\beta m + t-1} (\beta m - t + 2)^{s \alpha - 1}$ and $S_{2} \le b^{s\alpha} b^{-\beta m + t} m^s (\beta m - t+ 2)^{s(\alpha - 1)-1}$. Thus we have $$Q^\ast_{b,m,\alpha,\{1,\ldots, s\}}(C_1,\ldots, C_s) \le 2b^{s\alpha} b^{-\beta m + t} (\beta m + 2)^{s\alpha - 1},$$
from which the result follows.
\end{proof}

The following theorem is an immediate consequence of Lemma~\ref{lem_q} and Lemma~\ref{lem_Q}.

\begin{theorem}\label{th_wce}
Let $b$ be prime, $\alpha \ge 2$ be a natural number and let $C_1,\ldots, C_s \in \integer_b^{m \times m}$ be the generating matrices of a digital $(t,\alpha,\beta,m,s)$-net over $\integer_b$ with $m > t/\beta$. Then the  worst-case error in the Korobov space $\HH_\alpha$ is bounded by
\begin{eqnarray*}
 e_{b,m,\alpha}(C_1,\ldots, C_s)  & \le & \frac{2\left(1 + b^{-\alpha m} C_{b,\alpha}(\alpha + b^{-2}) + C_{b,\alpha} (1 + \alpha + b^{-2}) (\beta m + 2)^\alpha \right)^s}{b^{\beta m - t} (\beta m + 2)} \\ && + (1+ b^{-\alpha m}C_{b,\alpha}(\alpha + b^{-2}))^s - 1,
\end{eqnarray*}
where $C_{b,\alpha} > 0$ is the constant in Lemma~\ref{lem_boundr}.
\end{theorem}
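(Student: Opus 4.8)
The plan is to chain together the three bounds already in hand, since Lemma~\ref{lem_wcesqrt}, Lemma~\ref{lem_q}, and Lemma~\ref{lem_Q} between them control every quantity in the statement. First I would apply Lemma~\ref{lem_wcesqrt} to replace the worst-case error by the sum $\sum_{\bsk \in \D}\sqrt{r_{b,\alpha}(\bsk)}$, and then invoke Lemma~\ref{lem_q} verbatim to bound this sum by $\sum_{\emptyset \neq u}(\cdots)^{s-|u|}C_{b,\alpha}^{|u|}(1+\alpha+b^{-2})^{|u|}\,Q^\ast_{b,m,u,\alpha}(C_1,\ldots,C_s) + (\cdots)^s - 1$, where the factor $(\cdots)$ abbreviates $1 + b^{-\alpha m}C_{b,\alpha}(\alpha+b^{-2})$. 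At this point the only object not yet in closed form is $Q^\ast_{b,m,u,\alpha}$, which is exactly what Lemma~\ref{lem_Q} estimates, so the result is reduced to a substitution followed by a simplification of the sum over subsets.

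The second step is that substitution. Writing $A = 1 + b^{-\alpha m}C_{b,\alpha}(\alpha+b^{-2})$ for brevity, I would insert the bound $Q^\ast_{b,m,u,\alpha} \le 2\,b^{|u|\alpha}\,b^{t-\beta m}(\beta m + 2)^{|u|\alpha - 1}$ from Lemma~\ref{lem_Q} into each summand. Each term over a subset $u$ then factors into a common prefactor $2\,b^{t-\beta m}/(\beta m + 2)$, which is independent of $u$, times $A^{s-|u|}$ times an $|u|$-fold product of identical per-coordinate factors. The goal of the algebra is to collect this per-coordinate factor into the single quantity $X = C_{b,\alpha}(1+\alpha+b^{-2})(\beta m + 2)^{\alpha}$ appearing in the statement, so that the subset sum becomes $\sum_{\emptyset \neq u}A^{s-|u|}X^{|u|}$.

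Finally I would recognize this as a binomial expansion: $\sum_{u \subseteq \{1,\ldots,s\}}A^{s-|u|}X^{|u|} = (A+X)^s$, whence $\sum_{\emptyset \neq u}A^{s-|u|}X^{|u|} = (A+X)^s - A^s \le (A+X)^s$. Multiplying by the prefactor $2\,b^{t-\beta m}/(\beta m + 2)$ yields the first summand of the claimed bound, and the residual $A^s - 1$ carried over from Lemma~\ref{lem_q} supplies the second. I expect the main obstacle to be purely bookkeeping rather than conceptual: one must track the powers of $b$ and of $(\beta m + 2)$ carefully through the substitution of Lemma~\ref{lem_Q}, since the $b^{|u|\alpha}$ and the $(\beta m + 2)^{|u|\alpha - 1}$ must combine correctly with the factor $1/(\beta m + 2)$ pulled out so that the summand genuinely factorizes across coordinates and the binomial collapse produces exactly the $s$-th power displayed. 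No new idea beyond the three lemmas is required, which is why the statement can be labelled an immediate consequence.
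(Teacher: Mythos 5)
Your chain of lemmas is exactly the paper's intended proof: the paper gives no argument beyond declaring Theorem~\ref{th_wce} ``an immediate consequence of Lemma~\ref{lem_q} and Lemma~\ref{lem_Q}'' (with Lemma~\ref{lem_wcesqrt} supplying the initial passage from $e(P_{b^m},K_\alpha)$ to $\sum_{\bsk\in\D}\sqrt{r_{b,\alpha}(\bsk)}$), and the binomial collapse $\sum_{u}A^{s-|u|}X^{|u|}=(A+X)^s$ is the only algebra involved. However, the step you yourself singled out as mere bookkeeping is precisely where the argument, as written, does not close. Lemma~\ref{lem_Q} gives $Q^\ast_{b,m,u,\alpha}\le 2\,b^{|u|\alpha}\,b^{-\beta m+t}(\beta m+2)^{|u|\alpha-1}$, so after extracting the common prefactor $2\,b^{t-\beta m}/(\beta m+2)$ the per-coordinate factor is
\[
C_{b,\alpha}(1+\alpha+b^{-2})\,b^{\alpha}\,(\beta m+2)^{\alpha},
\]
not $X=C_{b,\alpha}(1+\alpha+b^{-2})(\beta m+2)^{\alpha}$: the factor $b^{|u|\alpha}=(b^\alpha)^{|u|}$ has nothing to cancel against and must remain inside the $|u|$-fold product. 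What the chain actually proves is the theorem's bound with $A+b^{\alpha}X$ in place of $A+X$ inside the $s$-th power, which is strictly weaker than the displayed inequality since $b^{\alpha}>1$. Your assertion that the factors ``combine correctly'' to give exactly the stated constant is therefore not justified by the lemmas you invoke.

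To be fair, this discrepancy sits in the paper itself: the theorem is asserted to follow from the same two lemmas, and the $b^{|u|\alpha}$ of Lemma~\ref{lem_Q} is silently dropped in the displayed bound; it cannot be hidden in $C_{b,\alpha}$, which is pinned down in Lemma~\ref{lem_boundr}. So you have faithfully reconstructed the intended argument and inherited its slip. The slip is harmless for everything the theorem is used for, because the extra factor $b^{s\alpha}$ is independent of $m$, so Corollary~\ref{cor_bound} and the convergence rate $\landau(b^{-(\beta m-t)}m^{s\alpha-1})$ are untouched. But a complete write-up must either restate the theorem with $b^{\alpha}(\beta m+2)^{\alpha}$ in the inner parenthesis (the bound your chain genuinely delivers), or prove a sharpened version of Lemma~\ref{lem_Q} without the $b^{|u|\alpha}$ factor; as it stands, the final collapse to the theorem's exact constant is a gap, not bookkeeping.
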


\begin{remark}\rm
By the lower bound of Sharygin~\cite{shar} we have that the worst-case error in the Korobov space $\HH_\alpha$ is at most $\landau(N^{-\alpha} (\log N)^{s-1})$. Hence it follows from Theorem~\ref{th_wce} that for a digital $(t,\alpha,\beta,m,s)$-net with $\beta > \alpha$ we must have $t = \landau((\beta-\alpha) m)$. Thus in order to avoid having a $t$-value which grows with $m$ we added the restriction $\beta \le \alpha$ in Definition~\ref{def_net}. Further, this also implies that a digital $(t,\alpha,\beta,s)$-sequence with $t < \infty$ cannot exist if $\beta > \alpha$, hence $\beta \le \alpha$ is in this case a consequence of the definition rather than a restriction.
\end{remark}
\begin{remark}\rm\label{rem_5}
Lemma~\ref{lem_wcesqrt} also holds for digital nets which are digitally shifted by an arbitrary digital shift $\bssigma\in [0,1)^s$ and hence it follows that Theorem~\ref{th_wce} also holds in a more general form, namely for all digital $(t,\alpha,\beta,m,s)$-net which are digitally shifted.
\end{remark}

Theorem~\ref{th_wce} shows that we can obtain the optimal
convergence rate for natural numbers $\alpha\ge 2$ by using a
digital $(t,\alpha,m,s)$-net. The constructions previously proposed
(for example by Sobol, Faure, Niederreiter or Niederreiter-Xing)
have only been shown to be $(t,1,m,s)$-nets and it has been proven
that they achieve a convergence of the worst-case error of
$\landau(N^{-1}(\log N)^{s -1})$.

We can use Theorem~\ref{th_wce} to obtain the following corollary.

\begin{corollary}\label{cor_bound}
Let $b$ be prime and let $C^{(d)}_1,\ldots, C^{(d)}_s\in \integer_b^{\infty \times \infty}$ be the generating matrices of  a digital $(t(a),a, \min(a,d),s)$-sequence $S$ over $\integer_b$ for any integer $a \ge 1$. Then for any real $\alpha \ge 1$ there is a constant $C'_{b,s,\alpha} > 0$, depending only on $b,s$ and $\alpha$, such that the worst-case error in the Korobov space $\HH_\alpha$ using the first $N = b^m$ points of $S$ is bounded by  $$e_{b,m,\alpha}(C_1^{(d)},\ldots, C_s^{(d)}) \le C'_{b,s,\alpha} b^{t(\lfloor \alpha\rfloor)} \frac{(\log N)^{s\lfloor \alpha\rfloor - 1}}{N^{\min(\lfloor \alpha \rfloor,d)}}.$$
\end{corollary}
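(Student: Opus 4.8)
The plan is to reduce the problem for real smoothness $\alpha$ to integer smoothness and then to read off the claimed rate from Theorem~\ref{th_wce}. Write $\alpha' = \lfloor \alpha \rfloor$. As observed in Section~\ref{sect_korspace}, the unit ball of $\HH_\alpha$ is contained in the unit ball of $\HH_{\alpha'}$ because $\|f\|_{\alpha'} \le \|f\|_\alpha$; consequently $e(P,K_\alpha) \le e(P,K_{\alpha'})$ for any point set $P$. Hence it suffices to bound the worst-case error in $\HH_{\alpha'}$ for the integer smoothness $\alpha'$, and the final constant $C'_{b,s,\alpha}$ may depend on $\alpha'$ (and thus on $\alpha$).

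Next I would extract the relevant net. Since $C_1^{(d)},\ldots,C_s^{(d)}$ generate a digital $(t(a),a,\min(a,d),s)$-sequence for every integer $a \ge 1$, in particular for $a = \alpha'$, Definition~\ref{def_seq} shows that for every $m > t(\alpha')/\min(\alpha',d)$ the leading $m\times m$ submatrices generate a digital $(t(\alpha'),\alpha',\min(\alpha',d),m,s)$-net, and these submatrices produce exactly the first $N = b^m$ points of $S$. Assuming $\alpha' \ge 2$, I would then apply Theorem~\ref{th_wce} with $\beta = \min(\alpha',d)$ and $t = t(\alpha')$.

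The core of the argument is the asymptotic estimate of the two terms in Theorem~\ref{th_wce}. For the first term, bounding $b^{-\alpha' m} \le 1$ and $1 \le (\beta m + 2)^{\alpha'}$, the bracket is at most $C''(\beta m + 2)^{\alpha'}$ with $C''$ depending only on $b$ and $\alpha'$, so the first term is at most $2(C'')^s b^{t(\alpha')}(\beta m + 2)^{s\alpha'-1} b^{-\beta m}$. Using $b^{\beta m} = N^{\min(\alpha',d)}$ and $\beta m + 2 \le (\min(\alpha',d)+2)\log_b N$ for $m \ge 1$, this is $\landau\big(b^{t(\alpha')} (\log N)^{s\alpha'-1} N^{-\min(\alpha',d)}\big)$ with an implied constant depending on $b,s,\alpha',d$. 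For the second term, $(1+x)^s - 1 = \landau(x)$ as $x\to 0$ gives $\landau(b^{-\alpha' m}) = \landau(N^{-\alpha'})$; since $\min(\alpha',d)\le\alpha'$ and $(\log N)^{s\alpha'-1}\ge 1$ for $N$ large, this term is dominated by the first. Combining yields the claimed bound for $\alpha' \ge 2$, and the real-to-integer reduction finishes this case.

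Two edge cases remain. When $\alpha' = 1$ (that is, $1 \le \alpha < 2$) the net is an ordinary digital $(t(1),m,s)$-net, so Theorem~\ref{th_wce} does not apply directly; here I would invoke the classical bound $e(P,K_1) = \landau\big(b^{t(1)}(\log N)^{s-1} N^{-1}\big)$ for digital nets (see \cite{DP,niesiam}), which is exactly the asserted rate since $\min(1,d)=1$. When $m \le t(\alpha')/\min(\alpha',d)$ the number of points is bounded by a constant depending only on $b,s,d,\alpha'$, while $e(P,K_{\alpha'})$ is trivially bounded by a constant (indeed $e^2(P,K_{\alpha'}) \le K_{\alpha'}(\bszero,\bszero)-1$) and the right-hand side is bounded away from $0$, so these finitely many cases are absorbed into $C'_{b,s,\alpha}$. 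The main obstacle is simply the constant bookkeeping in the asymptotic estimate: confirming that the dominant contribution of Theorem~\ref{th_wce} carries precisely the exponents $s\alpha'-1$ on $\log N$ and $\min(\alpha',d)$ on $N$, and that the second term is genuinely lower order.
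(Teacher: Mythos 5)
Your proposal is correct and takes essentially the same route as the paper, which states Corollary~\ref{cor_bound} without a written proof beyond the remark that it follows from Theorem~\ref{th_wce}: one restricts to $a=\lfloor\alpha\rfloor$, uses the embedding $e(P,K_\alpha)\le e(P,K_{\lfloor\alpha\rfloor})$ from Section~\ref{sect_korspace}, applies Theorem~\ref{th_wce} with $\beta=\min(\lfloor\alpha\rfloor,d)$ and $t=t(\lfloor\alpha\rfloor)$ to the initial $m\times m$ submatrices, and reads off the rate from $b^{\beta m}=N^{\min(\lfloor\alpha\rfloor,d)}$ and $\beta m+2=\landau(\log N)$. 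You are in fact more careful than the paper: your separate treatment of $\lfloor\alpha\rfloor=1$ (where Theorem~\ref{th_wce} is inapplicable because it requires $\alpha\ge 2$, so a classical discrepancy-type bound is genuinely needed) and of the finitely many values $m\le t/\beta$ (absorbed into the constant, noting $b^{-\beta m}\ge b^{-t}$ there) fills gaps the paper leaves implicit; note only that since $\min(\lfloor\alpha\rfloor,d)\le\lfloor\alpha\rfloor$, your constants can be taken independent of $d$, as the statement requires.
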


\begin{remark}\rm
The above corollary shows that digital $(t,\alpha,
\min(\alpha,d),s)$-sequences constructed in
Section~\ref{sec_talphacons} achieve the optimal convergence (apart
from maybe some $\log N$ factor) of $P_{2\alpha}$ of
$\landau(N^{-2\alpha}(\log N)^{2s\alpha - 2})$ as long as $\alpha$
is an integer such that $1 \le \alpha \le d$. If  $\alpha > d$ we
obtain a convergence of $\landau(N^{-2d}(\log N)^{2s \alpha -2})$.
\end{remark}

\section{A bound on the mean square worst-case error in $\HH_\alpha$ for digital $(t,\alpha,\beta,m,s)$-nets and digital $(t,\alpha,\beta,s)$-sequences}\label{sec_boundrand}

To combine the advantages of random quadrature points with those of
deterministic quadrature points one sometimes uses a combination of
those two methods, see for example \cite{DP05, hick, matou, owen}.
The idea is to use a random element which preserves the essential
properties of a deterministic point set. We call the expectation
value of the square worst-case error of such randomized point sets
the mean square worst-case error.

\subsection{Randomization}

In the following we introduce a randomization scheme called digital
shift (see \cite{DP,matou}). Let $P_N = \{\bsx_0,\ldots,
\bsx_{N-1}\} \subseteq [0,1)^s$ with $\bsx_n = (x_{1,n},\ldots,
x_{s,n})$ and $x_{j,n} = x_{j,n,1} b^{-1} + x_{j,n,2} b^{-2} +
\cdots$ for $n = 0,\ldots, N-1$ and $j = 1,\ldots, s$. Let
$\sigma_{j,1}, \sigma_{j,2}, \ldots  \in \{0,1\}$ be i.i.d. for $j =
1,\ldots, s$. Then the randomly digitally shifted point set
$P_{N,\bssigma} = \{\bsz_0,\ldots, \bsz_{N-1}\}$, $\bsz_n =
(z_{1,n},\ldots, z_{s,n})$ using a digital shift, is then given by
$$z_{j,n} = (x_{j,n,1}\oplus \sigma_{j,1})b^{-1} +
(x_{j,n,2} \oplus \sigma_{j,2}) b^{-2} + \cdots$$ for $ j =
1,\ldots, s$ and $n = 0,\ldots, N-1$, where $x_{j,n,k} \oplus
\sigma_{j,n} = x_{j,n,k} + \sigma_{j,n} \pmod{b}$ (note that all
additions of the digits are carried out in the finite field
$\integer_b$). Subsequently let $P_N = \{\bsx_0,\ldots,
\bsx_{N-1}\}$ and let $P_{N,\bssigma}$ be the digitally shifted
point set $P_N$ using the randomization just described.

\subsection{The mean square worst-case error in the Korobov space}

In this section we will analyze the expectation value of
$e^2(P_{N,\bssigma},K_\alpha)$, which we  denote by
$\tilde{e}^2(P_N,K_\alpha) = \EE[e^2(P_{N,\bssigma},K_\alpha)]$,
with respect to the random digital shift described above. We call
$\tilde{e}^2(P_N,K_\alpha)$ the mean square worst-case error.

From (\ref{eq_wceker}) and  the linearity of the expectation
operator we have $$\tilde{e}^2(P_N,K_\alpha) =
\EE[e^2(P_{N,\bssigma},K_\alpha)] = -1 +
\frac{1}{N^2}\sum_{n,l=0}^{N-1} \prod_{j=1}^s
\EE[K_{\alpha}(z_{j,n},z_{j,l})].$$

In order to compute $\EE[K_{\alpha}(z_{j,n},z_{j,l})]$ we need the
following lemma, which, in a very similar form, was already shown
in \cite{DP05}, Lemma~3. Hence we omit a proof.
\begin{lemma}\label{lem_exber}
Let $x_1,x_2\in [0,1)$ and let $z_1,z_2 \in [0,1)$ be the points
obtained after applying an i.i.d. random digital shift to $x_1$ and
$x_2$. Then we have
$$\EE[\wal_k(z_1)\overline{\wal_l(z_2)}] = \left\{\begin{array}{ll}
\wal_k(x_1) \overline{\wal_k(x_2)} & \mbox{if } k = l,
\\ 0 & \mbox{otherwise}. \end{array}\right.$$
\end{lemma}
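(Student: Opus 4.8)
The plan is to exploit the fact that the \emph{same} digital shift $\sigma$ is applied to both $x_1$ and $x_2$, so that the random factor depending on $\sigma$ separates cleanly from the deterministic factors depending on $x_1$ and $x_2$. Write the common one-dimensional shift as $\sigma = \sigma_1 b^{-1} + \sigma_2 b^{-2} + \cdots$, where the digits $\sigma_1, \sigma_2, \ldots$ are i.i.d.\ uniform on $\integer_b$, so that $z_1 = x_1 \oplus \sigma$ and $z_2 = x_2 \oplus \sigma$.

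First I would apply the second identity of Proposition~\ref{prop1}(1), namely $\wal_k(x)\wal_k(\sigma) = \wal_k(x\oplus \sigma)$, to factor
$$\wal_k(z_1) = \wal_k(x_1 \oplus \sigma) = \wal_k(x_1)\wal_k(\sigma), \qquad \overline{\wal_l(z_2)} = \overline{\wal_l(x_2)}\;\overline{\wal_l(\sigma)}.$$
Since $x_1$ and $x_2$ are deterministic, taking expectations gives
$$\EE[\wal_k(z_1)\overline{\wal_l(z_2)}] = \wal_k(x_1)\overline{\wal_l(x_2)}\;\EE[\wal_k(\sigma)\overline{\wal_l(\sigma)}],$$
so everything reduces to evaluating the scalar $\EE[\wal_k(\sigma)\overline{\wal_l(\sigma)}]$.

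For this last expectation I would compute digit by digit. Writing the base-$b$ digits of $k$ and $l$ as $\kappa_0,\kappa_1,\ldots$ and $\lambda_0,\lambda_1,\ldots$, the definition of $\wal$ gives $\wal_k(\sigma)\overline{\wal_l(\sigma)} = \prod_{i\ge 1}\omega_b^{\sigma_i(\kappa_{i-1}-\lambda_{i-1})}$, a finite product since only finitely many digits are nonzero. By independence of the $\sigma_i$ the expectation factors as $\prod_{i\ge 1}\EE[\omega_b^{\sigma_i(\kappa_{i-1}-\lambda_{i-1})}]$, and each factor equals $b^{-1}\sum_{\sigma_i=0}^{b-1}\omega_b^{\sigma_i(\kappa_{i-1}-\lambda_{i-1})}$, which is $1$ when $\kappa_{i-1} = \lambda_{i-1}$ and $0$ otherwise (a full sum of $b$-th roots of unity). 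Hence the whole product is $1$ precisely when every digit of $k$ matches the corresponding digit of $l$, i.e.\ when $k = l$, and $0$ otherwise. Substituting back yields $\wal_k(x_1)\overline{\wal_k(x_2)}$ when $k=l$ and $0$ when $k\ne l$, as claimed. Alternatively one could note that i.i.d.\ uniform digits make $\sigma$ uniformly distributed on $[0,1)$, so that $\EE[\wal_k(\sigma)\overline{\wal_l(\sigma)}] = \int_0^1 \wal_k(\sigma)\overline{\wal_l(\sigma)}\rd\sigma$ equals $1$ if $k=l$ and $0$ otherwise by the orthonormality in Proposition~\ref{prop1}.

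The only point requiring care is the $b$-adic rational exception in Proposition~\ref{prop1}(1): the factorization $\wal_k(x\oplus\sigma) = \wal_k(x)\wal_k(\sigma)$ may fail on the set of shifts $\sigma$ for which $x_1\oplus\sigma$ or $x_2\oplus\sigma$ becomes a $b$-adic rational while $x_i$ is not. This exceptional set of $\sigma$ has Lebesgue measure zero, hence probability zero under the digital shift, so it does not affect the expectation. The digit-by-digit computation above avoids invoking Proposition~\ref{prop1}(1) at these boundary points altogether and is therefore the cleaner route.
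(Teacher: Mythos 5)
Your proof is correct; note, however, that the paper itself contains no proof to compare against: Lemma~\ref{lem_exber} is stated with the remark that a very similar form was already shown as Lemma~3 of \cite{DP05}, and the proof is omitted. Your self-contained derivation is the standard argument and fills this gap cleanly: the character property of Walsh functions (Proposition~\ref{prop1}(1)) factors the common shift out of both points, giving $\wal_k(z_1)\overline{\wal_l(z_2)} = \wal_k(x_1)\overline{\wal_l(x_2)}\,\wal_k(\sigma)\overline{\wal_l(\sigma)}$; independence of the shift digits then factors the remaining expectation into one term per digit, and each term is a full sum of $b$-th roots of unity, hence equals $1$ if the corresponding digits of $k$ and $l$ agree and $0$ otherwise, so the product survives exactly when $k=l$. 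Your treatment of the $b$-adic rational exception is also right: the set of shifts $\sigma$ for which $x_1\oplus\sigma$ or $x_2\oplus\sigma$ has a digit string ending in all digits $b-1$ is countable, hence null under the uniform distribution of $\sigma$ induced by i.i.d.\ uniform digits, so it cannot affect the expectation. (You also silently correct the paper's apparent typo that the shift digits lie in $\{0,1\}$; uniform digits in $\{0,\ldots,b-1\}$ is the intended reading, and your proof makes that explicit.) One small imprecision: your closing claim that the digit-by-digit computation avoids Proposition~\ref{prop1}(1) \emph{altogether} overstates things, since the factorization in your first display is itself an instance of that proposition and therefore still needs the null-set caveat (or, alternatively, a digit-level rewriting of the entire product $\wal_k(z_1)\overline{\wal_l(z_2)}$ before taking expectations); since you do supply the null-set argument, the proof stands as written.
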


Recall that  $$K_\alpha(x_1,x_2) = \sum_{k,l = 0}^\infty
r_{b,\alpha}(k,l) \wal_k(x_1)\overline{\wal_l(x_2)},$$ where
$$r_{b,\alpha}(k,l) = \int_0^1 \int_0^1 K_\alpha(x_1,x_2)
\overline{\wal_k(x_1)} \wal_l(x_2) \rd x_1 \rd x_2.$$ Let $z_1,z_2$
be obtained by applying an i.i.d. random digital shift to $x_1,x_2$.
Using Lemma~\ref{lem_exber} and the linearity of expectation we
obtain $$\EE[K_{\alpha}(z_{1},z_{2})] = \sum_{k=0}^{\infty}
r_{b,\alpha}(k) \wal_k(x_1)\overline{\wal_k(x_2)},$$ where
$r_{b,\alpha}(k) = r_{b,\alpha}(k,k)$ and $r_{b,\alpha}(0) = 1$.

Therefore we obtain  $$\EE[e^2(P_{N,\bssigma},K_\alpha)] = -1 +
 \frac{1}{N^2}\sum_{n,l=0}^{N-1} \prod_{j=1}^s \sum_{k=0}^{\infty} r_{b,\alpha}(k)
\wal_k(x_{j,n}) \overline{\wal_k(x_{j,l})}.$$ Further we have $$
\prod_{j=1}^s \sum_{k=0}^{b^m-1} r_{b,\alpha}(k) \wal_k(x_{j,n})
\overline{\wal_k(x_{j,l})} = 1 + \sum_{\bsk\in\{0,\ldots, b^m-1\}^s
\setminus \{\bszero\}} r_{b,\alpha}(\bsk) \wal_k(\bsx_n \ominus
\bsx_l),$$ where we write $r_{b,\alpha}(\bsk) = \prod_{j=1}^s
r_{b,\alpha}(k_j)$ for $\bsk = (k_1,\ldots, k_s)$. We have shown the
following theorem.

\begin{theorem}\label{th_PN}
Let $b \ge 2$ be a natural number and let $\alpha > 1/2$ be a real
number. Then the mean square worst-case error for integration in the
Korobov space $\HH_\alpha$ using the point set $P_N$ randomized by a
digital shift is given by
\begin{eqnarray*}
\EE[e^2(P_{N,\bssigma},K_\alpha)] & = & \sum_{\bsk\in \NN_0^s
\setminus \{\bszero\}}
r_{b,\alpha}(\bsk)\frac{1}{N^2}\sum_{n,l=0}^{N-1} \wal_k(\bsx_n
\ominus \bsx_l).
\end{eqnarray*}
\end{theorem}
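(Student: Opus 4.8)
The plan is to start from the formula already derived in the running text just before the statement, namely
\begin{equation*}
\EE[e^2(P_{N,\bssigma},K_\alpha)] = -1 + \frac{1}{N^2}\sum_{n,l=0}^{N-1} \prod_{j=1}^s \EE[K_\alpha(z_{j,n},z_{j,l})],
\end{equation*}
which followed from \eqref{eq_wceker}, the tensor-product structure $K_\alpha(\bsx,\bsy) = \prod_{j=1}^s K_\alpha(x_j,y_j)$, and the fact that the random shifts are applied independently in each coordinate (so that the expectation factors across $j$). The whole argument is really just an assembly of facts that have already been proven earlier in the excerpt, so I would present it as a short verification rather than a new derivation.

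First I would invoke Lemma~\ref{lem_exber} together with the Walsh-series representation \eqref{eq_walshker} of the one-dimensional kernel to collapse the double Walsh sum: since $\EE[\wal_k(z_1)\overline{\wal_l(z_2)}]$ vanishes unless $k=l$, the expectation of $K_\alpha(z_1,z_2)$ reduces to the diagonal sum $\sum_{k=0}^\infty r_{b,\alpha}(k)\wal_k(x_1)\overline{\wal_k(x_2)}$, exactly as recorded in the text. Substituting this one-dimensional identity into each factor of the product over $j$ and recombining the product using the multiplicativity $r_{b,\alpha}(\bsk) = \prod_{j=1}^s r_{b,\alpha}(k_j)$ and the Walsh product rule (Proposition~\ref{prop1}, part 1) gives
\begin{equation*}
\prod_{j=1}^s \sum_{k=0}^\infty r_{b,\alpha}(k)\wal_k(x_{j,n})\overline{\wal_k(x_{j,l})} = \sum_{\bsk\in\NN_0^s} r_{b,\alpha}(\bsk)\,\wal_{\bsk}(\bsx_n\ominus\bsx_l),
\end{equation*}
where I use $\wal_k(x)\overline{\wal_k(y)} = \wal_k(x\ominus y)$ coordinatewise.

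Finally I would separate off the $\bsk=\bszero$ term, which contributes $r_{b,\alpha}(\bszero)=1$ for every pair $(n,l)$ and hence contributes exactly $1$ after dividing by $N^2$ and summing, cancelling the leading $-1$. What remains is precisely the claimed sum over $\bsk\in\NN_0^s\setminus\{\bszero\}$ of $r_{b,\alpha}(\bsk)\,N^{-2}\sum_{n,l}\wal_{\bsk}(\bsx_n\ominus\bsx_l)$. The only genuine point requiring care — and the step I expect to be the main obstacle — is justifying the interchange of the infinite Walsh summation with the finite sums over $n,l$ and with the expectation operator; this is where one needs the absolute convergence of the Walsh series of $K_\alpha$ (for $\alpha>1/2$, guaranteed by the decay of $r_{b,\alpha}$ controlled via Lemma~\ref{lem_boundr} and the estimates in Lemma~\ref{lem_q}), together with the completeness of the Walsh system (Proposition~\ref{prop1}, part 5) to ensure the pointwise-a.e.\ identity lifts to the claimed equality. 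Since the text has already carried out the substitution explicitly and merely announces the theorem, I would keep the proof to this verification and rely on the earlier convergence estimates to license the rearrangements.
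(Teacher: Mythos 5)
Your proposal is correct and follows essentially the same route as the paper, whose ``proof'' is precisely the running text preceding the theorem: the formula from (\ref{eq_wceker}) with linearity of expectation, Lemma~\ref{lem_exber} applied to the Walsh series (\ref{eq_walshker}) to collapse the double sum to the diagonal, the product-to-sum recombination via $r_{b,\alpha}(\bsk)=\prod_j r_{b,\alpha}(k_j)$ and $\wal_k(x)\overline{\wal_k(y)}=\wal_k(x\ominus y)$, and cancellation of the $-1$ against the $\bsk=\bszero$ term. Your added remark on justifying the interchange of the infinite Walsh sum with the expectation is a point of rigor the paper silently skips (and for general real $\alpha>1/2$ it is better supported by Lemma~\ref{lem_sumr} than by Lemma~\ref{lem_boundr}, which is stated only for integer $\alpha$), but this does not change the substance of the argument.
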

In the following we closer investigate the mean square worst-case
error for digital nets randomized with a digital shift.

Subsequently we will often write
$\tilde{e}^2_{b,m,\alpha}(C_1,\ldots, C_s)$ to denote the mean
square worst-case error $\EE[e(P_{b^m,\bssigma},K_\alpha)]$, where
$P_{b^m}$ is a digital net with generating matrices $C_1,\ldots,
C_s$ and $b^m$ points and $P_{b^m,\bssigma}$ is the digital net
$P_{b^m}$ randomized with a digital shift.

\begin{theorem}\label{th_q}
Let $m \ge 1$, $b$ be a prime number and $\alpha > 1/2$ be a real
number. The mean square worst-case error in the Korobov space
$\HH_\alpha$ using a randomly digitally shifted digital net over
$\integer_b$  with generating matrices $C_1,\ldots, C_s\in
\integer_b^{m\times m}$ is given by
$$\tilde{e}^2_{b,m,\alpha}(C_1,\ldots, C_s) =
\sum_{\bsk \in \D} r_{b,\alpha}(\bsk).$$
\end{theorem}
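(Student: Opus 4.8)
The plan is to start from Theorem~\ref{th_PN}, which already expresses the mean square worst-case error as
$$\EE[e^2(P_{N,\bssigma},K_\alpha)] = \sum_{\bsk\in \NN_0^s \setminus \{\bszero\}} r_{b,\alpha}(\bsk)\, \frac{1}{N^2}\sum_{n,l=0}^{N-1} \wal_{\bsk}(\bsx_n \ominus \bsx_l).$$
Specializing to $N = b^m$ and to the digital net $P_{b^m}$ with generating matrices $C_1,\ldots,C_s$, it remains only to evaluate the inner character sum and to show that it equals the indicator of the event $\bsk \in \D$. Thus the whole theorem reduces to computing $\frac{1}{b^{2m}}\sum_{n,l=0}^{b^m-1}\wal_{\bsk}(\bsx_n \ominus \bsx_l)$.

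First I would use the multiplicativity of Walsh functions from part~1 of Proposition~\ref{prop1}. Since every point $\bsx_n$ of a digital net is a $b$-adic rational (a finite $m$-digit expansion), the restriction on non-$b$-adic arguments never comes into play, and one may write $\wal_{\bsk}(\bsx_n \ominus \bsx_l) = \wal_{\bsk}(\bsx_n)\,\overline{\wal_{\bsk}(\bsx_l)}$ componentwise. This factorizes the double sum into a product of single sums, giving
$$\frac{1}{b^{2m}} \sum_{n,l=0}^{b^m-1} \wal_{\bsk}(\bsx_n \ominus \bsx_l) = \left| \frac{1}{b^m} \sum_{n=0}^{b^m-1} \wal_{\bsk}(\bsx_n) \right|^2.$$
Next I would invoke exactly the character-sum identity for digital nets already used in the proof of Theorem~\ref{th_wcer} (taken from \cite{DP}): $\frac{1}{b^m}\sum_{\bsx \in P_{b^m}} \wal_{\bsk}(\bsx)$ equals $1$ if $\bsk \in \D \cup \{\bszero\}$ and $0$ otherwise. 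Hence the squared modulus above is $1$ precisely when $\bsk \in \D \cup \{\bszero\}$ and $0$ otherwise. Because the outer sum runs over $\bsk \neq \bszero$, only the indices $\bsk \in \D$ survive, each contributing its full weight $r_{b,\alpha}(\bsk)$, which yields the claimed identity.

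The argument is essentially bookkeeping once these two ingredients are in place, so I do not expect a serious obstacle; the only point demanding care is the Walsh factorization step, where one must confirm that the $b$-adic-rational caveat in Proposition~\ref{prop1} does not obstruct writing $\wal_{\bsk}(\bsx_n \ominus \bsx_l) = \wal_{\bsk}(\bsx_n)\,\overline{\wal_{\bsk}(\bsx_l)}$ — this is immediate here because all net points have finite base-$b$ expansions. As a consistency check I would also contrast the result with Theorem~\ref{th_wcer}: the deterministic square worst-case error $\sum_{\bsk,\bsl\in\D} r_{b,\alpha}(\bsk,\bsl)$ collapses, under the random digital shift, to exactly its diagonal $\sum_{\bsk\in\D} r_{b,\alpha}(\bsk,\bsk)$, in agreement with Lemma~\ref{lem_exber}, which shows that the shift annihilates every off-diagonal cross term $\EE[\wal_{\bsk}(\bsz_{n})\overline{\wal_{\bsl}(\bsz_{l})}]$ with $\bsk \neq \bsl$.
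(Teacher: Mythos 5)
Your proposal is correct and follows essentially the same route as the paper: both start from Theorem~\ref{th_PN} and reduce the claim to the character-sum identity $\frac{1}{b^m}\sum_{\bsx \in P_{b^m}} \wal_{\bsk}(\bsx) = 1$ for $\bsk \in \D \cup \{\bszero\}$ and $0$ otherwise, which the paper cites from \cite{DP}. The only difference is cosmetic: you derive the reduction of the double sum $\frac{1}{b^{2m}}\sum_{n,l}\wal_{\bsk}(\bsx_n\ominus\bsx_l)$ to the squared modulus of the single sum via Walsh multiplicativity (correctly noting the $b$-adic-rational caveat is harmless for net points), whereas the paper quotes this equality directly from \cite{DP}.
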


\begin{proof}
In \cite{DP} it was shown that $$\frac{1}{b^{2m}} \sum_{n,l =
0}^{b^m-1} \wal_{\bsk}(\bsx_n \ominus \bsx_l) = \frac{1}{b^{m}}
\sum_{n = 0}^{b^m-1} \wal_{\bsk}(\bsx_n) = \left\{\begin{array}{ll}
1 & \mbox{if } \bsk \in \D\cup \{\bszero\}, \\ 0 & \mbox{otherwise}.
\end{array}\right.$$ Hence the result follows from
Theorem~\ref{th_PN}.
\end{proof}

\begin{remark}\rm
Theorem~\ref{th_wcer} and Theorem~\ref{th_q} now imply that
$$\tilde{e}_{b,m,\alpha}(C_1,\ldots, C_s) = \sqrt{\sum_{\bsk \in \D}
r_{b,\alpha}(\bsk)} \le \sqrt{\sum_{\bsk,\bsl \in \D}
r_{b,\alpha}(\bsk,\bsl)} = e(P_{b^m},K_\alpha),$$ i.e. the root mean
square worst-case error is always smaller than the worst-case error,
see also Remark~\ref{rem_5}.
\end{remark}

Remark~\ref{rem_5} and also the above Remark imply that the bounds
on the worst-case error also hold for the root mean square
worst-case error. On the other hand, following the proofs for the
bound on the worst-case error using the criterium for the root mean
square worst-case error yields a better bound. We outline the
results subsequently.

Following the proof of Lemma~\ref{lem_q} we obtain
\begin{eqnarray*}
\sum_{\bsk \in\D} r_{b,\alpha}(\bsk) & \le & \sum_{\emptyset \neq u
\subseteq \{1,\ldots, s\}} (1 + b^{-2\alpha m}
\bar{C}_{b,\alpha}^2)^{s - |u|} (C_{b,\alpha}^2 +
\bar{C}_{b,\alpha}^2)^{|u|} \sum_{\bsk \in \D^\ast_{b^m,u}}
q^2_{b,\alpha}(\bsk) \\ &&  +  ( 1 + b^{-2\alpha m}
\bar{C}^2_{b,\alpha})^s - 1,
\end{eqnarray*}
where $C_{b,\alpha}$ is the constant from Lemma~\ref{lem_boundr} and
\begin{equation}\label{eq_cbar}
\bar{C}_{b,\alpha} = C_{b,\alpha} \sqrt{b^{-1} + (b^2-b)^{-1}
\prod_{c=3}^{\alpha + 1} (b^{2c}-b^{2(c-1)})^{-1}}.
\end{equation}

The sum $\sum_{\bsk \in \D^\ast_{b^m,u}} q^2_{b,\alpha}(\bsk)$ can
now be bounded using almost the same arguments as in the proof of
Lemma~\ref{lem_Q}. Doing this one can obtain that for a digital
$(t,\alpha,\beta,m,s)$-net we have
\begin{eqnarray*}
\sum_{\bsk \in \D^\ast_{b^m,u}} q^2_{b,\alpha}(\bsk) \le
(2b)^{|u|\alpha} b^{-2(\beta m - t) + 1} (\beta m - t + 1)^{|u|
\alpha - 1}.
\end{eqnarray*}
Hence we obtain the following theorem.

\begin{theorem}\label{th_wcerand}
Let $b$ be prime, $\alpha \ge 1$ an integer and let $C_1,\ldots, C_s
\in \integer_b^{m \times m}$ be the generating matrices of a digital
$(t,\alpha,\beta,m,s)$-net over $\integer_b$ with $m > t/\beta$.
Then the mean square worst-case error in the Korobov space
$\HH_\alpha$ is bounded by
\begin{eqnarray*}
\lefteqn{\tilde{e}^2_{b,m,\alpha}(C_1,\ldots, C_s) } \\ & \le &
 \frac{\left(1 +
b^{-2\alpha m} \bar{C}_{b,\alpha}^2 + (2b)^\alpha (C_{b,\alpha}^2 +
\bar{C}_{b,\alpha}^2) (\beta m - t + 1)^\alpha \right)^s - (1 +
b^{-2\alpha m} \bar{C}_{b,\alpha}^2)^s}{b^{2(\beta m - t) - 1}
(\beta m - t + 1)} \\ && + ( 1 + b^{-2\alpha m}
\bar{C}^2_{b,\alpha})^s - 1,
\end{eqnarray*}
where $C_{b,\alpha} > 0$ is the constant in Lemma~\ref{lem_boundr}
and the constant $\bar{C}_{b,\alpha} > 0$ is given by
(\ref{eq_cbar}).
\end{theorem}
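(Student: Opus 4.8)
The plan is to follow the blueprint laid out immediately before the statement, which reduces everything to bounding $\sum_{\bsk \in \D^\ast_{b^m,u}} q^2_{b,\alpha}(\bsk)$ and then assembling the pieces. First I would invoke the already-displayed consequence of following the proof of Lemma~\ref{lem_q}, namely
\begin{equation*}
\sum_{\bsk \in \D} r_{b,\alpha}(\bsk) \le \sum_{\emptyset \neq u \subseteq \{1,\ldots, s\}} (1 + b^{-2\alpha m} \bar{C}_{b,\alpha}^2)^{s - |u|} (C_{b,\alpha}^2 + \bar{C}_{b,\alpha}^2)^{|u|} \sum_{\bsk \in \D^\ast_{b^m,u}} q^2_{b,\alpha}(\bsk) + ( 1 + b^{-2\alpha m} \bar{C}^2_{b,\alpha})^s - 1,
\end{equation*}
together with the quoted bound $\sum_{\bsk \in \D^\ast_{b^m,u}} q^2_{b,\alpha}(\bsk) \le (2b)^{|u|\alpha} b^{-2(\beta m - t) + 1} (\beta m - t + 1)^{|u| \alpha - 1}$. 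Since Theorem~\ref{th_q} identifies $\tilde{e}^2_{b,m,\alpha}(C_1,\ldots, C_s) = \sum_{\bsk \in \D} r_{b,\alpha}(\bsk)$, combining these three facts is the whole content of the argument.

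The routine but essential step is the algebraic simplification of the double sum over nonempty $u \subseteq \{1,\ldots,s\}$. Substituting the $Q$-bound, the summand carries a factor $(2b)^{|u|\alpha} b^{-2(\beta m - t)+1} (\beta m - t + 1)^{|u|\alpha - 1}$; pulling out the $|u|$-independent quantity $b^{-2(\beta m - t)+1}/(\beta m - t + 1)$ and writing the remaining part as a product over $j \in u$ lets me apply the binomial theorem. Concretely, I would recognize $\sum_{\emptyset \neq u} (1 + b^{-2\alpha m}\bar C_{b,\alpha}^2)^{s-|u|} \bigl[(C_{b,\alpha}^2 + \bar C_{b,\alpha}^2)(2b)^\alpha (\beta m - t + 1)^\alpha\bigr]^{|u|}$ as the expansion of $(A+B)^s - A^s$ with $A = 1 + b^{-2\alpha m}\bar C_{b,\alpha}^2$ and $B = (2b)^\alpha (C_{b,\alpha}^2 + \bar C_{b,\alpha}^2)(\beta m - t + 1)^\alpha$. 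This produces exactly the leading term
\begin{equation*}
\frac{\bigl(1 + b^{-2\alpha m} \bar{C}_{b,\alpha}^2 + (2b)^\alpha (C_{b,\alpha}^2 + \bar{C}_{b,\alpha}^2) (\beta m - t + 1)^\alpha \bigr)^s - (1 + b^{-2\alpha m} \bar{C}_{b,\alpha}^2)^s}{b^{2(\beta m - t) - 1} (\beta m - t + 1)},
\end{equation*}
while the trailing $+(1 + b^{-2\alpha m}\bar C^2_{b,\alpha})^s - 1$ is carried over unchanged from the first displayed inequality. Matching the $b^{2(\beta m - t)-1}$ and $(\beta m - t + 1)$ factors in numerator and denominator is the only place one must be careful with the exact exponents.

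The main obstacle, as the text signals with the phrase ``using almost the same arguments as in the proof of Lemma~\ref{lem_Q},'' lies not in this assembly but in rederiving the $q^2$-analogue of Lemma~\ref{lem_Q}. In Lemma~\ref{lem_Q} the key device was counting $|\D^\ast_{b^m,\{1,\ldots,s\}}(i_{1,1},\ldots,i_{s,\alpha})|$ via the rank of a matrix $B$ controlled by the $(t,\alpha,\beta,m,s)$-net property, followed by splitting the sum into the regions $S_1$ and $S_2$ and estimating each with Matou\v{s}ek's binomial-coefficient lemma. For the squared weight $q^2_{b,\alpha}(\bsk) = b^{-2\mu_{b,\alpha}(\bsk)}$ the summand in \eqref{sum_Qstar} gains an extra factor of $b^{-(i_{1,1}+\cdots+i_{s,\alpha})}$, so the geometric decay is doubled; I would need to re-run both the $S_1$ estimate (now summing ${l + s\alpha - 1 \choose s\alpha - 1} b^{-2l}$) and the $S_2$ estimate with this faster decay, which is precisely what yields the sharper exponent $b^{-2(\beta m - t)+1}$ in place of $b^{-(\beta m - t)}$ and hence the improved power of $\log N$ advertised in the introduction. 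Since the excerpt only quotes the outcome of that computation, I would verify that the counting bounds for $|\D^\ast_{b^m,u}(\cdots)|$ are unchanged (they depend on the net property, not on the weight) and that only the geometric sums over the $i_{j,l}$ need the doubled exponent.
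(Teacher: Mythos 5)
Your proposal is correct and follows exactly the paper's own route: Theorem~\ref{th_q}, the $q^2$-analogues of Lemma~\ref{lem_q} and Lemma~\ref{lem_Q} (which the paper states without detailed proof just before the theorem), and the binomial identity $\sum_{\emptyset\neq u\subseteq\{1,\ldots,s\}} A^{s-|u|}B^{|u|} = (A+B)^s - A^s$ to assemble the final bound. Your observation that the counting bounds on $|\D^\ast_{b^m,u}(\cdots)|$ are unchanged (they depend only on the net property) while only the geometric decay of the weights doubles is precisely what the paper means by ``almost the same arguments as in the proof of Lemma~\ref{lem_Q}.''
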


We can use Theorem~\ref{th_wcerand} to obtain the following corollary.

\begin{corollary}\label{cor_bounderand}
Let $b$ be prime and let $C^{(d)}_1,\ldots, C^{(d)}_s\in
\integer_b^{\infty \times \infty}$ be the generating matrices of  a
digital $(t(a),a, \min(a,d),s)$-sequence $S$ over $\integer_b$ for
any integer $a \ge 1$. Then for any real $\alpha \ge 1$ there is a
constant $C''_{b,s,\alpha} > 0$, depending only on $b,s$ and
$\alpha$, such that the root mean square worst-case error in the
Korobov space $\HH_\alpha$ using the first $N = b^m$ points of $S$
is bounded by  $$\tilde{e}_{b,m,\alpha}(C_1^{(d)},\ldots, C_s^{(d)})
\le C''_{b,s,\alpha} b^{t(\lfloor \alpha\rfloor)} \frac{(\log
N)^{(s\lfloor \alpha\rfloor - 1)/2}}{N^{\min(\lfloor \alpha
\rfloor,d)}}.$$
\end{corollary}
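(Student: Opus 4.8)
The plan is to derive Corollary~\ref{cor_bounderand} directly from Theorem~\ref{th_wcerand} together with the construction result Theorem~\ref{th_talphabetaseq}, following the same strategy that must underlie the deterministic Corollary~\ref{cor_bound}. First I would reduce the real parameter $\alpha \ge 1$ to the integer case: since the unit ball of $\HH_\alpha$ is contained in that of $\HH_{\lfloor \alpha \rfloor}$ (as noted in Section~\ref{sect_korspace}, because $\|f\|_{\lfloor \alpha \rfloor} \le \|f\|_\alpha$), integration in $\HH_\alpha$ is no harder than in $\HH_{\lfloor \alpha \rfloor}$, so $\tilde{e}_{b,m,\alpha} \le \tilde{e}_{b,m,\lfloor \alpha \rfloor}$ and it suffices to bound the latter. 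Write $a = \lfloor \alpha \rfloor$; the sequence $S$ is by hypothesis a digital $(t(a), a, \min(a,d), s)$-sequence, so its leading $m \times m$ submatrices generate a digital $(t(a), a, \min(a,d), m, s)$-net for every $m > t(a)/\min(a,d)$, which is exactly the hypothesis needed to invoke Theorem~\ref{th_wcerand} with $\beta = \min(a,d)$ and $t = t(a)$.

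Next I would substitute $\beta m - t = \min(a,d)\, m - t(a)$ into the bound of Theorem~\ref{th_wcerand} and estimate each term asymptotically in $m$, absorbing all $b$-, $s$-, and $\alpha$-dependent constants into a single constant $C''_{b,s,\alpha}$. The dominant contribution is the first summand: its numerator is $\landau((\beta m)^{sa})$ up to a constant (the product term $(2b)^a(C_{b,\alpha}^2 + \bar{C}_{b,\alpha}^2)(\beta m - t + 1)^\alpha$ raised to the $s$-th power dominates, giving roughly $(\beta m)^{sa}$), while the denominator contains the factor $b^{2(\beta m - t) - 1}$, so after dividing the leading behavior is $\landau\!\big(m^{sa-1}\, b^{-2\min(a,d)m} b^{2t(a)}\big)$, picking up one power of $m$ from the extra $(\beta m - t + 1)$ in the denominator. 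The residual term $(1 + b^{-2am}\bar{C}^2_{b,\alpha})^s - 1$ is of order $b^{-2am}$, which is dominated by (or comparable to) the main term and contributes nothing larger. Since $N = b^m$ we have $\log N = m \log b$ and $b^{-2\min(a,d)m} = N^{-2\min(a,d)}$; taking square roots of the mean square bound converts the exponent $sa - 1$ on $m$ into $(sa-1)/2$ on $\log N$, and the exponent $-2\min(a,d)$ on $N$ into $-\min(a,d)$, and it turns $b^{2t(a)}$ into $b^{t(a)}$, yielding precisely
$$\tilde{e}_{b,m,\alpha}(C_1^{(d)},\ldots, C_s^{(d)}) \le C''_{b,s,\alpha}\, b^{t(\lfloor \alpha\rfloor)} \frac{(\log N)^{(s\lfloor \alpha\rfloor - 1)/2}}{N^{\min(\lfloor \alpha \rfloor,d)}}.$$

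The main obstacle is the bookkeeping in the asymptotic estimate of the first summand of Theorem~\ref{th_wcerand}: one must verify that, among the terms inside the $s$-th power, the factor growing polynomially in $m$ genuinely dominates the two bounded terms $1$ and $b^{-2\alpha m}\bar C_{b,\alpha}^2$, so that the binomial-type expansion of the $s$-th power contributes a clean leading order $\landau((\beta m)^{sa})$ rather than a messy sum of cross terms, and that the subtraction of $(1 + b^{-2\alpha m}\bar C_{b,\alpha}^2)^s$ in the numerator does not spoil this (it cancels the part of the expansion with no polynomial-in-$m$ factor, which is why only the residual term survives separately). Everything else is routine: the reduction to integer $\alpha$ is already justified in the preliminaries, and the passage from $m$ and $b^m$ to $\log N$ and $N$ is a direct substitution. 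I would therefore state the corollary's proof as a short paragraph invoking Theorem~\ref{th_wcerand} with the stated $\beta$ and $t$, performing the $m \to \log N$ substitution, and folding constants into $C''_{b,s,\alpha}$.
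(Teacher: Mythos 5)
Your proposal is correct and is essentially the paper's own argument: the paper derives Corollary~\ref{cor_bounderand} exactly by applying Theorem~\ref{th_wcerand} with $t = t(\lfloor\alpha\rfloor)$ and $\beta = \min(\lfloor\alpha\rfloor,d)$, using the unit-ball inclusion $\HH_\alpha \subseteq \HH_{\lfloor\alpha\rfloor}$ to reduce to integer smoothness, and then substituting $N = b^m$ and taking square roots. Your asymptotic bookkeeping (the numerator of order $m^{s\lfloor\alpha\rfloor}$, the extra factor $(\beta m - t + 1)$ in the denominator, and the domination of the residual $\landau(b^{-2\lfloor\alpha\rfloor m})$ term since $\min(\lfloor\alpha\rfloor,d) \le \lfloor\alpha\rfloor$) is exactly what the paper's one-line derivation tacitly relies on.
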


\begin{remark}\rm
The above corollary shows that the digital $(t,\alpha,
\min(\alpha,d),s)$-sequences constructed in
Section~\ref{sec_talphacons} achieve the optimal convergence of
$P_{2\alpha}$ of $\landau(N^{-2\alpha}(\log N)^{s\alpha - 1})$ as
long as $\alpha$ is an integer such that $1 \le \alpha \le d$. (This
convergence is best possible for $\alpha = 1$ by the lower bound in
\cite{shar}.) If  $\alpha > d$ we obtain a convergence of
$\landau(N^{-2d}(\log N)^{s \alpha -1})$.
\end{remark}

Using the construction of Theorem~\ref{th_talphabeta} or
Theorem~\ref{th_talphabetaseq} it follows that $t(a)$ also depends
on the choice of $d$. Hence choosing a large value of $d$ also
increases the constant factor $b^{t(\lfloor \alpha \rfloor)}$ in
Corollary~\ref{cor_bound} and Corollary~\ref{cor_bounderand}.

\section{Some examples of digital $(t,\alpha,m,s)$-nets over
$\integer_2$}\label{sec_example}

In this section we give a simple example to show how the nets
described in this paper can be constructed. We use the construction
method outlined in Section~\ref{sec_talphacons}.

\subsection{Example of a digital $(0,2,m,1)$-net over $\integer_2$}

First we use the so-called Hammersley net as the underlying digital
net, which is a $(0,m,2)$-net over $\integer_2$. The generating
matrices for this net are given by
\begin{equation}\label{hamnet}
C_1 =  \begin{pmatrix}
                        1& 0 &\hdots&0\\
                        0&\ddots&\ddots&\vdots\\
                        \vdots&\ddots&\ddots&0\\
                        0&\hdots&0&1
                        \end{pmatrix}
                        \mbox{ and }
C_2 = \begin{pmatrix}
                        0&\hdots&0&1\\
                        \vdots&\rdots&\rdots&0\\
                        0&\rdots&\rdots&\vdots\\
                        1&0&\hdots&0
                        \end{pmatrix}.\end{equation}

Now we use the construction method of Section~\ref{sec_talphacons}
to construct the matrix $C_1^{(2)}$, i.e. $d = 2$ in this case. The
first row of $C_1^{(2)}$ is the first row of $C_1$, the second row
of $C_1^{(2)}$ is the first row of $C_2$, the third row of
$C_1^{(2)}$ is the second row of $C_1$, the fourth row of
$C_1^{(2)}$ is the second row of $C_2$ and so on. Assume that $C_1,
C_2$ are $m\times m$ matrices where $m$ is even. Then we obtain
$$C_1^{(2)} =  \begin{pmatrix}
                        1&0&\hdots& \hdots&\hdots&\hdots&\hdots&0\\
                        0&\hdots&\hdots& \hdots&\hdots&\hdots&0&1\\
                        0&1&0&\hdots&\hdots&\hdots&\hdots&0 \\
                        0&\hdots&\hdots&\hdots&\hdots&0&1&0\\
                        \hdots&\hdots&\hdots&\hdots&\hdots&\hdots&\hdots&\hdots \\
                        \hdots&\hdots&\hdots&\hdots&\hdots&\hdots&\hdots&\hdots \\
                        0&\hdots&0&1&0&\hdots&\hdots&0 \\
                        0&\hdots&\hdots&0&1&0&\hdots&0
                        \end{pmatrix}.
$$
So for example if $m = 4$ we obtain
\begin{equation}\label{c12}
C_1^{(2)} =  \begin{pmatrix}
                        1&0&0&0\\
                        0&0&0&1\\
                        0&1&0&0 \\
                        0&0&1&0
                        \end{pmatrix}.
\end{equation}

The matrix $C_1^{(2)}$ is of course non-singular and therefore the
point set one obtains are just equidistant points starting with $0$.

Assume that $m$ is even. Then the digital net which one obtains from
$C_1^{(2)}$ is a digital $(0,1,m,1)$-net over $\integer_2$ and, at
the same time, it is also a digital $(0,2,m,1)$-net. Note that using
the bound from Theorem~\ref{th_talphabeta} we obtain a $t$-value of
$1$, but by closer investigation using Definition~\ref{def_net} one
can see that the properties also hold for $t = 0$. Hence the
$t$-value obtained from Theorem~\ref{th_talphabeta} is not
necessarily strict even if the value of the underlying digital net
is strict.

\subsection{Example of a digital $(t,2,4,2)$-net over $\integer_2$}

Consider the digital $(1,4,4)$-net over $\integer_2$ with generating
matrices given by $C_1,C_2$ above and
$$C_3 =  \begin{pmatrix}
                        1& 1 &1&1\\
                        0&1&0&1\\
                        0&0&1&1\\
                        0&0&0&1
                        \end{pmatrix}
                        \mbox{ and }
C_4 = \begin{pmatrix}
                        0&1&1&0\\
                        1&1&0&1\\
                        0&0&0&1\\
                        0&0&1&0
                        \end{pmatrix}.$$
Then $C_1^{(2)}$ is given by (\ref{c12}) and $C_2^{(2)}$ is given by
\begin{equation*}
C_2^{(2)} =  \begin{pmatrix}
                        1&1&1&1\\
                        0&1&1&0\\
                        0&1&0&1 \\
                        1&1&0&1
                        \end{pmatrix}.
\end{equation*}

Using the digital construction scheme we obtain the points
\begin{align*}
(0,0),(\tfrac{1}{2},\tfrac{9}{16}),(\tfrac{1}{8},\tfrac{15}{16}),(\tfrac{5}{8},\tfrac{3}{8}),(\tfrac{1}{16},\tfrac{3}{4}),(\tfrac{9}{16},\tfrac{5}{16}),(\tfrac{3}{16},\tfrac{3}{16}),(\tfrac{11}{16},\tfrac{5}{8}),
\\
(\tfrac{1}{4},\tfrac{11}{16}),(\tfrac{3}{4},\tfrac{1}{8}),(\tfrac{3}{8},\tfrac{1}{4}),(\tfrac{7}{8},\tfrac{13}{16}),(\tfrac{5}{16},\tfrac{7}{16}),(\tfrac{13}{16},\tfrac{7}{8}),(\tfrac{7}{16},\tfrac{1}{2}),(\tfrac{15}{16},\tfrac{1}{16}).
\end{align*}
%which are shown in Figure~\ref{Fig4grid}.

%\begin{figure}
%\includegraphics[scale = 0.8]{Fig4grid.eps}
%\caption {A digital $(3,2,4,2)$-net over $\integer_2$.}
%\label{Fig4grid}
%\end{figure}

It can be checked that this digital net is a digital
$(1,1,4,2)$-net, i.e. a digital $(1,4,2)$-net (the first two rows of
$C_1^{(2)}$ and the first two rows of $C_2^{(2)}$ are linearly
dependent, so the $t$-value cannot be $0$ when $\alpha = 1$).

Now we investigate the $t$-value when $\alpha = 2$. First note that
Theorem~\ref{th_talphabeta} yields a $t$-value of $4$ for $\alpha =
2$ ($d = s = 2$). Further the $t$-value cannot be $2$ in this case:
we need to consider all cases where $i_{1,1} + i_{1,\min(\nu_1,2)} +
i_{2,1} + i_{2,\min(\nu_2,2)} \le \alpha m - t = 2 \cdot 4 - 2 = 6$
with $0 \le \nu_1,\nu_2 \le 4$. But by choosing $i_{1,1} = i_{2,1} =
2$ and $i_{1,2} = i_{2,2} = 1$ we obtain the first two rows of $C_1$
and the first two rows of $C_2$, and as those $4$ rows are linearly
dependent it follows that the $t$-value cannot be $2$. Now let us
check whether a $t$-value of $3$ is possible: we need to have
$i_{1,1} + i_{1,\min(\nu_1,2)} + i_{2,1} + i_{2,\min(\nu_2,2)} \le
5$, hence $\nu_1,\nu_2 \ge 2$ is not possible (because then we would
have $i_{1,1} + i_{1,2} + i_{2,1} + i_{2,2} \ge 2 + 1 + 2 + 1
> 5$). Further the conditions are satisfied if either $\nu_1 = 0$ or
$\nu_2 = 0$ as the matrices $C_1^{(2)}$ and $C_2^{(2)}$ are
non-singular. If $\nu_1 > 2$ then $i_{1,1} \ge 3$ and $i_{1,2} \ge
2$ and hence $i_{1,1} + i_{1,2} \ge 5$ and we can only get $i_{1,1}
+ i_{1,\min(\nu_1,2)} + i_{2,1} + i_{2,\min(\nu_2,2)} \le 5$ if
$\nu_2 = 0$. Hence if either $\nu_1 > 2$ or $\nu_2 > 2$ the
properties are also satisfied. Thus we are left with the following
three cases: $(\nu_1,\nu_2) = (1,1)$, $(\nu_1,\nu_2) = (1,2)$ and
$(\nu_1,\nu_2) = (2,1)$.

Now let $\nu_1 = \nu_2 = 1$. Then we need to take one row of each
matrix $C_1^{(2)}$ and $C_2^{(2)}$ such that the sum of their row
indices is smaller or equal to $5$ and check whether those two rows
are linearly independent. It can be checked that this is always the
case: let $C_j^{(2)} = (c_{j,1}^\top, c_{j,2}^\top,
c_{j,3}^\top,c_{j,4}^\top)$, i.e. $c_{j,k}$ denotes the $k$-th row
of $C_{j}^{(2)}$. Then the pairs of vectors $(c_{1,k},c_{2,l})$
where $k + l \le 5$ are always linearly independent for all
admissible choices of $k$ and $l$ (i.e. $c_{1,k} \neq c_{2,l}$).

Consider now $\nu_1 = 1$ and $\nu_2 = 2$, i.e. we take one row from
$C_1^{(2)}$ and two rows from $C_2^{(2)}$ such that the sum of the
row indices does not exceed $5$. Note that $i_{2,2}$ has to be $1$
otherwise $i_{2,1} + i_{2,2} \ge 5$ and $i_{1,1}$ cannot even be
$1$. As $i_{1,1} \ge 1$ and $i_{2,1} \ge 2$ the only choices left
are $i_{1,1} = 1$ and $i_{2,1} = 2,3$ and $i_{1,1} = 2$ and $i_{2,1}
= 2$. So we need to check whether the triplets $(c_{1,1}, c_{2,1},
c_{2,2})$, $(c_{1,1},c_{2,1},c_{2,3})$ and $(c_{1,2}, c_{2,1},
c_{2,2})$ are all linearly independent, which upon inspection can be
seen to be the case.

The case $\nu=2$ and $\nu = 1$ can also be checked as the previous
case. In this case all the relevant sets of vectors are also always
linearly independent, hence a $t$-value of $3$ is possible for
$\alpha = 2$, i.e. the digital net above is a (strict) digital
$(3,2,4,2)$-net.

The classical $t$-value (i.e. $\alpha = 1$) of this digital net is
not as good as for example the $t$-value of the Hammersley net
(which is $0$). On the other hand it can be checked that for $\alpha
= 2$ the $t$-value of the Hammersley net where $m = 4$ is $4$ and
hence for this case it is worse than the $t$-value of the digital
net constructed above.

As a last example let us consider the Hammersley net again for
arbitrary $m \ge 1$, i.e. with the $m \times m$ generating matrices
given by (\ref{hamnet}). As for example the first row of $C_1$ and
the last row of $C_2$ are the same (and therefore linearly
dependent) we must have $\beta m - t < m+1$ for all $\alpha \ge 1$
(for $\alpha =1$ we can still choose $\beta = 1$ and $t = 0$ and
hence the Hammersley net achieves the optimal $t$-value, but for
$\alpha > 1$ we have seen in Section~\ref{sec_talphacons} that there
are better constructions). It is sensible to choose $\beta$ such
that we can have a $t$-value which is independent of $m$ (for
example this is the case when one considers sequences and which is
also the motivation for introducing those parameters; for digital
nets it would of course also make sense to just state the value of
$\beta m - t$ and $m$ instead of $t,\beta$ and $m$). This means that
$\beta \le 1$, and as $\beta$ indicates the convergence rate one can
obtain it follows that one cannot expect to obtain a convergence
rate beyond $(b^m)^{-1+\delta}$ (for an arbitrary small $\delta >
0$) when using a Hammersley net.

\section{Appendix: Some lemmas}

We need the following lemmas.

\begin{lemma}\label{lem_int}
Let $j \ge 1$, $a \ge 0$, $b \ge 2$ and $0 \le u,v < b^a$ with $u
\neq v$. Then we have
\begin{equation*}
\int_{u/b^a}^{(u+1)/b^a} \int_{u/b^a}^{(u+1)/b^a} |x-y|^j \rd x \rd
y = \frac{2}{b^{a(j+2)} (j+1)(j+2)}
\end{equation*}
and
\begin{equation*}
\int_{u/b^a}^{(u+1)/b^a} \int_{v/b^a}^{(v+1)/b^a} |x-y|^j \rd x \rd
y =  \frac{2 j!}{b^{a(j+2)}} \sum_{l=0}^{\lfloor j/2 \rfloor}
\frac{|u-v|^{j-2l}}{(j-2l)! (2l+2)!}.
\end{equation*}
\end{lemma}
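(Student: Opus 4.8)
The plan is to compute both integrals directly by exploiting the translation-invariance of the integrand $|x-y|^j$, which reduces everything to integrals over dyadic (base-$b$) cells of side $b^{-a}$. First I would handle the diagonal case $u=v$. Here I substitute $x = (u+\xi)/b^a$ and $y = (u+\eta)/b^a$ with $\xi,\eta \in [0,1)$, so that $|x-y| = b^{-a}|\xi-\eta|$ and $\rd x\rd y = b^{-2a}\rd\xi\rd\eta$. This factors out $b^{-a(j+2)}$ and leaves $\int_0^1\int_0^1 |\xi-\eta|^j\rd\xi\rd\eta$, which by symmetry equals $2\int_0^1\int_0^\xi (\xi-\eta)^j\rd\eta\rd\xi = 2\int_0^1 \xi^{j+1}/(j+1)\rd\xi = \tfrac{2}{(j+1)(j+2)}$. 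Multiplying back gives the first formula.

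For the off-diagonal case $u\neq v$, the same affine substitution gives $|x-y| = b^{-a}|(u-\eta)-(v-\xi)|$ wait---more carefully, with $x=(u+\xi)/b^a$, $y=(v+\eta)/b^a$ one gets $|x-y| = b^{-a}|(u-v)+(\xi-\eta)|$, and since $u\neq v$ are integers while $\xi,\eta\in[0,1)$, the quantity $(u-v)+(\xi-\eta)$ never changes sign across the cell when $|u-v|\ge 1$; in fact $|(u-v)+(\xi-\eta)|$ can be expanded without absolute-value complications once one notes $|\xi-\eta|<1\le|u-v|$. So after factoring out $b^{-a(j+2)}$ the remaining task is to evaluate
\begin{equation*}
J := \int_0^1\int_0^1 \bigl((u-v)+(\xi-\eta)\bigr)^j\,\rd\xi\,\rd\eta
\end{equation*}
(taking $u>v$ without loss of generality, so the base is positive). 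I would then apply the binomial theorem to $\bigl(D+(\xi-\eta)\bigr)^j$ with $D=u-v$, giving $\sum_{i=0}^j \binom{j}{i} D^{j-i}\int_0^1\int_0^1(\xi-\eta)^i\rd\xi\rd\eta$. The inner moment $\int_0^1\int_0^1(\xi-\eta)^i\rd\xi\rd\eta$ vanishes for odd $i$ by antisymmetry and equals $\tfrac{2}{(i+1)(i+2)}$ for even $i=2l$ by the diagonal computation above. Substituting $i=2l$ and $\binom{j}{2l} = \tfrac{j!}{(2l)!(j-2l)!}$, together with $\tfrac{1}{(2l+1)(2l+2)}\cdot\tfrac{1}{(2l)!} = \tfrac{1}{(2l+2)!}$, collapses the sum into $2\,j!\sum_{l=0}^{\lfloor j/2\rfloor} \tfrac{D^{j-2l}}{(j-2l)!(2l+2)!}$, which is exactly the claimed expression with $D=|u-v|$.

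The only genuine subtlety---and the step I would check most carefully---is justifying the removal of the absolute value in the off-diagonal case, i.e.\ that $(u-v)+(\xi-\eta)$ has constant sign on $[0,1)^2$. This holds precisely because $u,v$ are distinct integers in $\{0,\dots,b^a-1\}$, so $|u-v|\ge 1$, while $|\xi-\eta|<1$; hence the sign is that of $u-v$ throughout, and replacing $D^{j-2l}$ by $|u-v|^{j-2l}$ at the end (the odd powers having dropped out) absorbs the case $u<v$. The remaining manipulations---the binomial expansion, the vanishing of odd moments, and the factorial bookkeeping---are all elementary and I expect no obstacle there.
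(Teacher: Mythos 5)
Your proof is correct. It uses the same basic ingredients as the paper's proof --- reduction to unit scale by an affine substitution, the observation that $|u-v|\ge 1$ while $|\xi-\eta|<1$ so the absolute value can be dropped in the off-diagonal case, and a binomial expansion with parity cancellation --- but arranges them in the opposite order. The paper first evaluates the iterated integral $\int_0^1\int_{|u-v|}^{|u-v|+1}(x-y)^j\rd x\rd y$ in closed form, obtaining the second finite difference $\bigl((D+1)^{j+2}+(D-1)^{j+2}-2D^{j+2}\bigr)/\bigl((j+1)(j+2)\bigr)$ with $D=|u-v|$ (the displayed numerator in the paper has its sign flipped, a harmless typo, since the final simplification agrees with the positive value), and only then expands this difference binomially, the odd powers cancelling, to reach the factorial sum. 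You instead expand $\bigl(D+(\xi-\eta)\bigr)^j$ binomially \emph{before} integrating, so the problem reduces to the central moments $\int_0^1\int_0^1(\xi-\eta)^i\rd\xi\rd\eta$, which vanish for odd $i$ by antisymmetry and for even $i$ are given by your (and the paper's) diagonal computation. Your arrangement has the small advantage of reusing the first formula of the lemma verbatim and of never producing the closed-form second difference, so there is nothing left to ``simplify''; the paper's arrangement keeps the intermediate expression compact. Either way the factorial bookkeeping $\binom{j}{2l}\frac{2}{(2l+1)(2l+2)}=\frac{2\,j!}{(j-2l)!\,(2l+2)!}$ is identical, and both proofs are complete and elementary.
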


\begin{proof}
We have
\begin{eqnarray*}
\int_{u/b^a}^{(u+1)/b^a} \int_{u/b^a}^{(u+1)/b^a} |x-y|^j \rd x \rd
y & = & \int_{0}^{1/b^a} \int_{0}^{1/b^a} |x-y|^j \rd x \rd y \\ & =
& \frac{1}{b^{a(j+2)}} \int_0^1 \int_0^1 |x-y|^j \rd x \rd y.
\end{eqnarray*}
We divide the last double integral in two parts, we have $$ \int_0^1
\int_0^1 |x-y|^j \rd x \rd y = \int_0^1 \int_0^y (y-x)^j \rd x \rd y
+ \int_0^1 \int_y^1 (x-y)^j \rd x \rd y.$$ We calculate the first
part and obtain $$\int_0^1 \int_0^y (y-x)^j \rd x \rd y =
\frac{1}{j+1} \int_0^1 y^{j+1} \rd y = \frac{1}{(j+1)(j+2)}$$ and
the second part is given by
$$\int_0^1 \int_y^1 (x-y)^j \rd x \rd y = \frac{1}{j+1} \int_0^1 (1-y)^{j+1} \rd y = \frac{1}{(j+1)(j+2)}.$$ Hence
we have $$\int_0^1 \int_0^1 |x-y|^j \rd x \rd y =  \frac{2}{(j+1)
(j+2)}.$$

For the second part we have
\begin{eqnarray*}
\int_{u/b^a}^{(u+1)/b^a} \int_{v/b^a}^{(v+1)/b^a} |x-y|^j \rd x \rd
y &=& \int_0^{1/b^a} \int_{|u-v|/b^a}^{(|u-v|+1)/b^a} |x-y|^j \rd x
\rd y \\  &=& \frac{1}{b^{a(j+2)}} \int_0^1 \int_{|u-v|}^{|u-v|+1}
(x-y)^j \rd x \rd y,
\end{eqnarray*}
where now $|u-v| \ge 1$. We have
\begin{eqnarray*}
\int_0^1 \int_{|u-v|}^{|u-v|+1} (x-y)^j \rd x \rd y &=&
\frac{1}{j+1} \int_0^1 \left( (|u-v|+1-y)^{j+1} - (|u-v|-y)^{j+1}
\right) \rd y \\ & = & \frac{2|u-v|^{j+2} - (|u-v|+1)^{j+2} -
(|u-v|-1)^{j+2}}{(j+1)(j+2)}.
\end{eqnarray*}
The result follows by simplifying the sum in the numerator.
\end{proof}

\begin{lemma}\label{lem_Ijeven}
Let $k \ge 1$ be given by $k = \kappa_{a_1-1} b^{a_1-1} + \cdots +
\kappa_{a_\nu-1} b^{a_\nu-1}$ for some $\nu \ge 1$,
$\kappa_{a_1-1},\ldots, \kappa_{a_\nu-1} \in \{1,\ldots, b-1\}$ and
$1 \le a_\nu < \cdots < a_1$. For any  even $0 \le j < 2\nu$ we have
$I_j(k) = 0$.
\end{lemma}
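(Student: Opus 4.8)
The plan is to exploit that $j$ is even, so that $|x-y|^j = (x-y)^j$, and to expand this power by the binomial theorem. Writing $M_p = \int_0^1 x^p \wal_k(x)\rd x$, the two integration variables in $I_j(k)$ then separate and one obtains
\[
I_j(k) = \sum_{p=0}^j \binom{j}{p} (-1)^{j-p}\, \overline{M_p}\, M_{j-p},
\]
using $\int_0^1 x^p \overline{\wal_k(x)}\rd x = \overline{M_p}$ since $x^p$ is real. This reduces the whole statement to a single moment-vanishing claim, namely that $M_p = 0$ for every $0 \le p < \nu$. Granting this, each summand above contains a factor $\overline{M_p}$ or $M_{j-p}$ whose index is below $\nu$: if both $p \ge \nu$ and $j-p \ge \nu$ held we would have $j \ge 2\nu$, contradicting $j < 2\nu$. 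Hence every summand vanishes and $I_j(k)=0$. (This route bypasses Lemma~\ref{lem_int}; alternatively one could discretize $I_j(k)$ over elementary intervals and insert Lemma~\ref{lem_int}, but the reduction would again land on the same moment condition.)

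It remains to prove the key step $M_p = 0$ for $p<\nu$, and this is where the digit structure of $k$ enters. Because the highest nonzero base-$b$ digit of $k$ lies in position $a_1$, the function $\wal_k$ is constant on each elementary interval $[u b^{-a_1},(u+1)b^{-a_1})$ with $0 \le u < b^{a_1}$. Substituting $x=(u+t)b^{-a_1}$ on each such interval and expanding $(u+t)^p$, I would write $M_p$ as a linear combination of the discrete moment sums
\[
S_q = \sum_{u=0}^{b^{a_1}-1} u^q\, \wal_k(u b^{-a_1}), \qquad 0 \le q \le p,
\]
so that $M_p=0$ for all $p<\nu$ follows once $S_q = 0$ is established for all $q < \nu$.

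The heart of the argument is the factorization of $S_q$ over the digits of $u$. Writing $u = \sum_{i=1}^{a_1} u_i b^{a_1 - i}$ gives $\wal_k(u b^{-a_1}) = \prod_{i=1}^{a_1} \omega_b^{u_i \kappa_{i-1}}$, and expanding $u^q$ by the multinomial theorem turns $S_q$ into a sum of terms each of which factors as $\prod_i \bigl(\sum_{w=0}^{b-1} w^{e_i} \omega_b^{w \kappa_{i-1}}\bigr)$ with $\sum_i e_i = q$. At the $\nu$ positions $i$ where $\kappa_{i-1} \neq 0$, the factor with exponent $e_i=0$ is the complete character sum $\sum_{w=0}^{b-1} \omega_b^{w\kappa_{i-1}} = 0$, since $\omega_b^{\kappa_{i-1}}$ is a nontrivial $b$-th root of unity. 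A term can therefore be nonzero only if $e_i \ge 1$ at each of these $\nu$ positions, which forces $q = \sum_i e_i \ge \nu$; for $q<\nu$ every term vanishes and $S_q=0$.

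I expect the main obstacle to be bookkeeping rather than conceptual: keeping the Walsh digit-reversal indexing straight (the top digit of $u$ is paired with the lowest digit $\kappa_0$ of $k$) and carrying the multinomial and binomial coefficients through both reductions. The one genuine input is the vanishing of the elementary character sum at the $\nu$ nonzero positions, which holds for every $b \ge 2$ because each $\kappa_{i-1} \in \{1,\dots,b-1\}$ gives $\omega_b^{\kappa_{i-1}} \neq 1$, so no primality of $b$ is needed.
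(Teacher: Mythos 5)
Your proof is correct, and it takes a genuinely different technical route from the paper's. The paper also begins with $|x-y|^j=(x-y)^j$ for even $j$, but instead of the binomial theorem it substitutes the Walsh series of the identity function, $x = \tfrac{1}{2}+\sum_{c=1}^{\infty}\sum_{\tau=1}^{b-1} b^{-c}\bigl(\de^{-2\pi\icomp\tau/b}-1\bigr)^{-1}\wal_{\tau b^{c-1}}(x)$ (quoted from \cite{DP}), raises the resulting series for $y-x$ to the $j$-th power, and integrates term by term: each term is a product of at most $j$ Walsh functions $\wal_{\tau_i b^{c_i-1}}$ distributed between the $x$- and $y$-variables, and by Walsh orthogonality it survives only if the digitwise sums on both sides match $k$; since a digitwise sum of fewer than $\nu$ single-digit numbers cannot reproduce the $\nu$ nonzero digits of $k$, and $j<2\nu$ forbids both sides having at least $\nu$ factors, every term dies. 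Your argument arrives at exactly the same pigeonhole ($p\ge\nu$ and $j-p\ge\nu$ cannot both hold), but through the moment integrals $M_p=\int_0^1 x^p\,\wal_k(x)\rd x$: binomial separation of variables reduces $I_j(k)$ to $\sum_p\binom{j}{p}(-1)^{j-p}\overline{M_p}M_{j-p}$, and you prove $M_p=0$ for $p<\nu$ by factoring the digit sums $S_q$ into complete character sums $\sum_{w=0}^{b-1}\omega_b^{w\kappa_{i-1}}=0$ at the $\nu$ positions of nonzero digits of $k$. In effect, your moment-vanishing lemma (that $\wal_k$ is orthogonal to every polynomial of degree less than the number of nonzero digits of $k$) is the polynomial counterpart of the Walsh-product orthogonality the paper exploits. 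What your route buys is self-containedness: no appeal to the Walsh expansion of the identity from another paper and no digitwise $\oplus$/$\ominus$ bookkeeping, only elementary geometric sums; as you note, it also makes transparent that no primality of $b$ is needed. The paper's route has the advantage of staying inside the Walsh-series machinery that the rest of its appendix (in particular the treatment of odd $j$) is built on.
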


\begin{proof}
The result for $j = 0$ follows from Proposition~\ref{prop1} and
(\ref{eq_Ijk}). It was shown in \cite{DP}, Appendix A, that $$x =
\frac{1}{2} + \sum_{c=1}^\infty \sum_{\tau =1}^{b-1} \frac{1}{b^c
(\de^{-2\pi \icomp \tau/b}-1)} \wal_{\tau b^{c-1}}(x)$$ and hence
\begin{eqnarray*}
|x-y|^j &=& \left(\sum_{c=1}^\infty \sum_{\tau =1}^{b-1}
\frac{1}{b^{c}(\de^{-2\pi\icomp \tau /b}-1)} (\wal_{\tau b^{c-1}}(y)
- \wal_{\tau b^{c-1}}(x)) \right)^j \\ &=& \sum_{c_1,\ldots, c_j =
1}^\infty \frac{1}{b^{c_1+\cdots + c_j}} \prod_{i=1}^j \sum_{\tau
=1}^{b-1} \frac{\wal_{\tau b^{c_i-1}}(y)-\wal_{\tau
b^{c_i-1}}(x)}{\de^{-2\pi\icomp \tau /b}-1}.
\end{eqnarray*}
Let $$A_k(c_1,\ldots, c_j) = \int_0^1 \int_0^1  \prod_{i=1}^j
\sum_{\tau =1}^{b-1} \frac{\wal_{\tau b^{c_i-1}}(y)-\wal_{\tau
b^{c_i-1}}(x)}{\de^{-2\pi\icomp \tau /b}-1} \overline{\wal_k(x)}
\wal_k(y) \rd x \rd y.$$ Then we have  $$I_j(k) = \sum_{c_1,\ldots,
c_j = 1}^\infty \frac{A_k(c_1,\ldots, c_j)}{b^{c_1+\cdots + c_j}}.$$
We have
\begin{eqnarray*}
\lefteqn{ \prod_{i=1}^j \sum_{\tau =1}^{b-1} \frac{\wal_{\tau
b^{c_i-1}}(y)-\wal_{\tau b^{c_i-1}}(x)}{\de^{-2\pi\icomp \tau /b}-1}
} \\ &=& \sum_{\tau_1,\ldots, \tau_j = 1}^{b-1} \prod_{i=1}^j
(\de^{-2\pi \icomp \tau_j/b}-1)^{-1} \sum_{u \subseteq \{1,\ldots,
j\}} (-1)^{|u|} \prod_{i \in u} \wal_{\tau_i b^{c_i-1}}(y) \prod_{i
\not\in u} \wal_{\tau_i b^{c_i-1}}(x) \\ &=& \sum_{\tau_1,\ldots,
\tau_j = 1}^{b-1} \prod_{i=1}^j (\de^{-2\pi \icomp \tau_j/b}-1)^{-1}
\sum_{u \subseteq \{1,\ldots, j\}} (-1)^{|u|} \wal_{C_{u,\tau}}(y)
\wal_{C_{\{1,\ldots, j\} \setminus u}, \tau}(x),
\end{eqnarray*}
where $C_{u, \tau} = \sum_{i \in u} \tau_i b^{c_i-1}$ and hence
\begin{eqnarray*}
\lefteqn{ A_k(c_1,\ldots, c_j) } \\ &=& \sum_{\tau_1,\ldots, \tau_j
= 1}^{b-1} \prod_{i=1}^j (\de^{-2\pi \icomp \tau_j/b}-1)^{-1}
\sum_{u \subseteq \{1,\ldots, j\}} (-1)^{|u|} \\ && \int_0^1
\int_0^1\wal_{C_{u,\tau}}(y) \wal_{C_{\{1,\ldots, j\} \setminus u,
\tau}}(x) \overline{\wal_k(x)} \wal_k(y) \rd x \rd y \\ &=&
\sum_{\tau_1,\ldots, \tau_j = 1}^{b-1} \prod_{i=1}^j (\de^{-2\pi
\icomp \tau_j/b}-1)^{-1} \\ && \sum_{u \subseteq \{1,\ldots, j\}}
(-1)^{|u|}  \int_0^1\wal_{C_{u,\tau} \oplus k}(y) \rd y \int_0^1
\wal_{C_{\{1,\ldots, j\} \setminus u,\tau} \ominus k}(x) \rd x.
\end{eqnarray*}

Note that if $\nu > j/2$ we either have $C_{u,\tau} \oplus k \neq 0$
or $C_{\{1,\ldots, j\} \setminus u, \tau} \ominus k \neq 0$ and
hence $A_k(c_1,\ldots, c_j) = 0$. The result now follows.
\end{proof}

Let $\sigma_p(n)  = \sum_{h=1}^{n-1} h^p$. It is known that
\begin{equation}\label{eq_sum}
\sigma_p(n) = \sum_{h=0}^p \frac{B_h}{h!} \frac{p!}{(p+1-h)!}
n^{p+1-h},
\end{equation}
where $B_0, B_1, \ldots$ are the Bernoulli numbers (in particular,
$B_0 = 1$, $B_1 = -1/2$ and $B_2 = 1/6$).

\begin{lemma}\label{lem_doublesum}
Let $b \ge 2$, $1 \le d \le a $, $k = \kappa_{d-1} b^{d-1} + \cdots
+ \kappa_0$ where $\kappa_{d-1} \in \{1,\ldots, b-1\}$,
$\kappa_{d-2},\ldots, \kappa_0 \in \{0,\ldots, b-1\}$, $m = m_{a-1}
b^{a-1} + \cdots + m_0$ and $n = n_{a-1} b^{a-1} + \cdots + n_0$.
Then we have
\begin{equation*}
 \sum_{n=0}^{b^{a}-2} \sum_{m=n+1}^{b^{a}-1} \wal_k((n \ominus m)/b^a) =  b^{2a-d}\left(\frac{1}{2} +  \frac{1}{\de^{2\pi \icomp \kappa_{d-1}/b}-1}\right) - \frac{b^a}{2},
\end{equation*}
\begin{equation*}
 \sum_{n=0}^{b^{a}-2} \sum_{m=n+1}^{b^{a}-1} (m-n) \wal_k((n \ominus m)/b^a) =  b^{3a-2d} \left(\frac{1}{6} -\frac{1}{2\sin^2(\kappa_{d-1}\pi/b)}\right) - \frac{b^a}{6}
\end{equation*}
and
\begin{equation*}
 \sum_{n=0}^{b^{a}-2} \sum_{m=n+1}^{b^{a}-1} (m-n) = \frac{1}{6} (b^{3a} - b^a).
\end{equation*}
\end{lemma}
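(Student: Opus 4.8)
The plan is to reduce each of the three double sums to elementary character sums over the $b$-th roots of unity by exploiting the digital structure of $\wal_k$, and I would dispose of the third, purely arithmetic identity first, since it reappears as a diagonal contribution in the second. Writing $K = b^a$ and grouping the pairs $n<m$ by their difference $p = m-n \in \{1,\ldots,K-1\}$ (each value occurring $K-p$ times) gives $\sum_{n<m}(m-n) = \sum_{p=1}^{K-1}(K-p)p = K\tfrac{(K-1)K}{2} - \tfrac{(K-1)K(2K-1)}{6} = \tfrac{K(K^2-1)}{6}$, which is $\tfrac16(b^{3a}-b^a)$.

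For the first two sums the starting point is the multiplicativity in Proposition~\ref{prop1}, namely $\wal_k((n\ominus m)/b^a) = \wal_k(n/b^a)\,\overline{\wal_k(m/b^a)}$. Setting $\chi(n) = \wal_k(n/b^a)$ and using $k = \kappa_{d-1}b^{d-1}+\cdots+\kappa_0$, the definition of $\wal_k$ shows that $\chi(n)$ depends only on the $d$ leading base-$b$ digits of $n$: writing $n = Nb^{a-d}+r$ with $0\le N<b^d$ and $0\le r<b^{a-d}$ one has $\chi(n)=\tilde\chi(N)$ with $|\tilde\chi(N)|=1$, and (because $\kappa_{d-1}\neq0$) $\sum_{N=0}^{b^d-1}\tilde\chi(N)=0$. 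The block variable $N$ then splits the sums: $n<m$ means $N<M$, or $N=M$ with $r<s$. On the diagonal $N=M$ the factor $\tilde\chi(N)\overline{\tilde\chi(N)}=1$, and the inner $r,s$-sum is the binomial count $\binom{b^{a-d}}{2}$ for the first sum and the third identity (with $a$ replaced by $a-d$) for the second. What survives is a reduced sum over the coarse grid, $\Sigma=\sum_{0\le N<M<b^d}\tilde\chi(N)\overline{\tilde\chi(M)}$ (respectively its $(M-N)$-weighted analogue $\Sigma_1$), times an appropriate power of $b^{a-d}$; tracking exponents one gets $S_0 = b^{2(a-d)}\Sigma + \tfrac12(b^{2a-d}-b^a)$ and $S_1 = b^{3(a-d)}\Sigma_1 + \tfrac16(b^{3a-2d}-b^a)$.

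To evaluate $\Sigma$ and $\Sigma_1$ I would peel off the lowest digit, writing $N=bN''+N_0$ with $N_0\in\{0,\ldots,b-1\}$, so that $\tilde\chi(N)=\omega_b^{N_0\kappa_{d-1}}\phi(N'')$ with $|\phi|=1$, and the ordering $N<M$ becomes $N''<M''$, or $N''=M''$ with $N_0<M_0$. The crucial simplification is that the $N''<M''$ part vanishes: for $\Sigma$ its inner factor is $\sum_{N_0,M_0}\omega_b^{(N_0-M_0)\kappa_{d-1}} = \big|\sum_{N_0}\omega_b^{N_0\kappa_{d-1}}\big|^2 = 0$, and for $\Sigma_1$ the extra weight $b(M''-N'')+(M_0-N_0)$ still integrates to zero, since both $\sum_{N_0,M_0}\omega_b^{(N_0-M_0)\kappa_{d-1}}$ and $\sum_{N_0,M_0}(M_0-N_0)\omega_b^{(N_0-M_0)\kappa_{d-1}}$ factor through the vanishing geometric sum $\sum_{c=0}^{b-1}\omega_b^{\pm c\kappa_{d-1}}=0$. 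Hence only $N''=M''$ remains, and since $|\phi|^2=1$ this collapses $\Sigma$ and $\Sigma_1$ to $b^{d-1}$ copies of single-digit sums, $\Sigma=b^{d-1}C$ and $\Sigma_1=b^{d-1}E$.

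It then remains to evaluate $C=\sum_{0\le N_0<M_0<b}\omega_b^{(N_0-M_0)\kappa_{d-1}}$ and $E=\sum_{0\le N_0<M_0<b}(M_0-N_0)\omega_b^{(N_0-M_0)\kappa_{d-1}}$. With $\zeta=\omega_b^{\kappa_{d-1}}=\de^{2\pi\icomp\kappa_{d-1}/b}$ a nontrivial $b$-th root of unity, $C$ is a routine double geometric sum giving $C=b/(\zeta-1)$, while $E=\sum_{p=1}^{b-1}(b-p)p\,\zeta^{-p}$ is an arithmetic–geometric sum that I would compute from $\sum_{p}p\,\zeta^{-p}$ and $\sum_{p}p^2\,\zeta^{-p}$ (obtained by differentiating $\sum_p x^p=(x^b-1)/(x-1)$ at $x=\zeta^{-1}$), after which it collapses to $E = 2b\zeta^{-1}/(\zeta^{-1}-1)^2 = -b/(2\sin^2(\kappa_{d-1}\pi/b))$. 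Substituting $\Sigma$ and $\Sigma_1$ back, and simplifying $\tfrac12 + 1/(\zeta-1) = -\tfrac{\icomp}{2}\cot(\kappa_{d-1}\pi/b)$ in the first case, reproduces the two stated closed forms. The main obstacle is not conceptual but computational: carefully managing the block/diagonal bookkeeping so no cross terms are lost, and correctly evaluating and trigonometrically simplifying the weighted root-of-unity sum $E$ (equivalently $\sum_p p^2\zeta^{-p}$) to the $\csc^2$ form.
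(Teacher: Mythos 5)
Your proposal is correct and follows essentially the same route as the paper's proof: both exploit that $\wal_k$ sees only the top $d$ base-$b$ digits and that $\kappa_{d-1}\neq 0$ annihilates every cross term through vanishing geometric sums, reducing the double sums to the same three cases (top $d-1$ digits differ; those equal but the digit $n_{a-d}$ differs; all top $d$ digits equal) and to the same single-digit evaluations $\sum_{p=1}^{b-1}(b-p)\zeta^{-p}=b/(\zeta-1)$ and $\sum_{p=1}^{b-1}(b-p)p\,\zeta^{-p}=-b/\bigl(2\sin^2(\kappa_{d-1}\pi/b)\bigr)$ with $\zeta=\de^{2\pi\icomp\kappa_{d-1}/b}$. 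The only cosmetic differences are your two-stage block bookkeeping $n=Nb^{a-d}+r$ with the factorization $\wal_k((n\ominus m)/b^a)=\chi(n)\overline{\chi(m)}$ in place of the paper's direct digit-vector case analysis, and your proof of the third identity by grouping pairs by their difference (multiplicity $b^a-p$) where the paper invokes the Bernoulli power-sum formula.
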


\begin{proof}
In order to obtain a formula for the first sum, let $m' = m_{a-1}
b^{a-1} + \cdots + m_{a-d+1} b^{a-d+1}$, $m'' = m_{a-d-1} b^{a-d-1}
+ \cdots + m_0$, $n' = n_{a-1} b^{a-1} + \cdots + n_{a-d+1}
b^{a-d+1}$ and $n'' = n_{a-d-1} b^{a-d-1} + \cdots + n_0$. First
consider the case where $m' > n'$ and arbitrary $m'',n''$. We have
\begin{equation*}
\sum_{n_{a-d}=0}^{b-1} \sum_{m_{a-d}=0}^{b-1} \de^{2\pi
\icomp(\kappa_0(n_{a-1} - m_{a-1}) + \cdots + \kappa_{d-1} (n_{a-d}
- m_{a-d}))/b}  = 0,
\end{equation*}
as $\sum_{m=0}^{b-1} \de^{2\pi \icomp \kappa m/b} = 0$ for all
$\kappa = 1,\ldots, b-1$. Thus we only need to consider the case
where $m'  = n'$, for which case we have $$\de^{2 \pi \icomp
(\kappa_0(n_{a-1} - m_{a-1}) + \cdots + \kappa_{d-1} (n_{a-d} -
m_{a-d}))/b} = \de^{2 \pi \icomp \kappa_{d-1} (m_{a-d} -
n_{a-d})/b}.$$ This part is now given by
\begin{equation}\label{sum2}
b^{d-1}\sum_{n''=0}^{b^{a-d}-1} \sum_{m''=0}^{b^{a-d}-1}
\sum_{n_{a-d} = 0}^{b-1} \sum_{m_{a-d}=0}^{b-1}  \de^{2 \pi \icomp
\kappa_{d-1}(n_{a-d} -m_{a-d})/b},
\end{equation}
where we have the additional assumption $m_{a-d} b^{a-d} + m'' >
n_{a-d} b^{a-d} + n''$. First consider the case where $m_{a-d} >
n_{a-d}$. This part of (\ref{sum2}) is given by
\begin{eqnarray*}
\lefteqn{ b^{d-1} \sum_{n''=0}^{b^{a-d}-1} \sum_{m''=0}^{b^{a-d}-1}
\sum_{n_{a-d}=0}^{b-2} \sum_{m_{a-d}=n_{a-d}+1}^{b-1}  \de^{2 \pi
\icomp \kappa_{d-1}(n_{a-d} -m_{a-d})/b}  } \qquad\qquad\qquad\qquad
\\ & = & b^{d-1} b^{2(a-d)}  \sum_{n_{a-d}=0}^{b-2}
\sum_{m_{a-d}=n_{a-d}+1}^{b-1} \de^{2 \pi \icomp
\kappa_{d-1}(n_{a-d} -m_{a-d})/b} \\ & = & \frac{b^{2a-d}}{\de^{2\pi
\icomp \kappa_{d-1}/b}-1}.
\end{eqnarray*}
Now consider the case where $m_{a-d} = n_{a-d}$. In this case we
have the assumption that $m'' > n''$ and hence this part of
(\ref{sum2}) is given by $$b^d \sum_{n''=0}^{b^{a-d}-2} \sum_{m'' =
n''+1}^{b^{a-d}-1} 1 = \frac{1}{2} \left(b^{2a-d} - b^a \right).$$
Thus (\ref{sum2}) is given by $$\frac{b^{2a-d}}{\de^{2\pi \icomp
\kappa_{d-1}/b}-1} + \frac{1}{2} \left(b^{2a-d} - b^a \right)$$ and
the first result follows.

For the second sum let again $m' = m_{a-1} b^{a-1} + \cdots +
m_{a-d+1} b^{a-d+1}$, $m'' = m_{a-d-1} b^{a-d-1} + \cdots + m_0$,
$n' = n_{a-1} b^{a-1} + \cdots + n_{a-d+1} b^{a-d+1}$ and also $n''
= n_{a-d-1} b^{a-d-1} + \cdots + n_0$. First consider the case where
$m' > n'$ and arbitrary $m'',n''$. We have
\begin{eqnarray*}
\lefteqn{\sum_{n_{a-d}=0}^{b-1} \sum_{m_{a-d}=0}^{b-1}  (m-n)
\de^{2\pi \icomp(\kappa_0(n_{a-1} - m_{a-1}) + \cdots + \kappa_{d-1}
(n_{a-d} - m_{a-d}))/b} } \\ & = &  \sum_{n_{a-d}=0}^{b-1}
\sum_{m_{a-d}=0}^{b-1}  (m_{a-d}-n_{a-d}) \de^{2 \pi \icomp
(\kappa_0(n_{a-1} - m_{a-1}) + \cdots + \kappa_{d-1} (n_{a-d} -
m_{a-d}))/b} \\ &= & 0,
\end{eqnarray*}
as $\sum_{m=0}^{b-1} \de^{2\pi \icomp \kappa m/b} = 0$ for all
$\kappa = 1,\ldots, b-1$.

Thus we are left with the case where $m' = n'$. We have $$\de^{2 \pi
\icomp (\kappa_0(n_{a-1} - m_{a-1}) + \cdots + \kappa_{d-1} (n_{a-d}
- m_{a-d}))/b} = \de^{2 \pi \icomp \kappa_{d-1} (m_{a-d} -
n_{a-d})/b}.$$ Hence this part is given by
\begin{equation}\label{sum1}
%\lefteqn{ \sum_{n=0}^{b^{a}-2} \sum_{m=n+1}^{b^{a}-1} (m-n) \de^{2 \pi \icomp (\kappa_0(n_{a-1} - m_{a-1}) + \cdots + \kappa_{d-1} (n_{a-d} - m_{a-d}))/b}} \nonumber \\ & = &
b^{d-1} \sum_{n_{a-d} = 0}^{b-1} \sum_{m_{a-d}=0}^{b-1}
\sum_{n''=0}^{b^{a-d}-1} \sum_{m''=0}^{b^{a-d}-1} (m''-n'' +
b^{a-d}(m_{a-d}-n_{a-d})) \de^{2 \pi \icomp \kappa_{d-1}(n_{a-d}
-m_{a-d})/b},
\end{equation}
where we have the additional assumption $m_{a-d} b^{a-d} + m'' >
n_{a-d} b^{a-d} + n''$. First consider the case where $m_{a-d} >
n_{a-d}$. This part of (\ref{sum1}) is given by
\begin{eqnarray*}
\lefteqn{b^{d-1} \sum_{0 \le n_{a-d} < m_{a-d} < b}\sum_{m'',
n''=0}^{b^{a-d}-1} (m''-n'' + b^{a-d}(m_{a-d}-n_{a-d}))  \de^{2 \pi
\icomp \kappa_{d-1}(n_{a-d} -m_{a-d})/b} } \quad\qquad\qquad
\\ & = & b^{d-1} b^{3(a-d)}  \sum_{n_{a-d}=0}^{b-2} \sum_{m_{a-d}=n_{a-d}+1}^{b-1} (m_{a-d}-n_{a-d}) \de^{2 \pi \icomp \kappa_{d-1}(n_{a-d} -m_{a-d})/b} \\ & = & -\frac{b^{3a-2d}}{2\sin^2(\kappa_{d-1}\pi/b)}.
\end{eqnarray*}
Now consider the case where $m_{a-d} = n_{a-d}$. In this case we
have the assumption that $m'' > n''$ and hence this part of
(\ref{sum1}) is given by
$$b^d \sum_{n''=0}^{b^{a-d}-2} \sum_{m'' = n''+1}^{b^{a-d}-1}
(m''-n'') =\frac{b^d}{6} (b^{3(a-d)} - b^{a-d}).$$ (This result can
be obtained using (\ref{eq_sum}), see the proof of the third part
below.) Thus (\ref{sum1}) is given by
$$-\frac{b^{3a-2d}}{2\sin^2(\kappa_{d-1}\pi/b)} + \frac{b^d}{6}
(b^{3(a-d)} - b^{a-d})$$ and the second result follows.

The third result can easily be verified by using (\ref{eq_sum}).
Indeed we have
\begin{equation*}
 \sum_{n=0}^{b^{a}-2} \sum_{m=n+1}^{b^{a}-1} (m-n)  =  \sum_{n=0}^{b^a-2} \sum_{m=1}^{b^a-1-n} m =
 \sum_{n=0}^{b^a-2} \sigma_1(b^a-n) = \frac{1}{2} \sum_{n=0}^{b^a-2} ((b^a-n)^2-(b^a-n)).
\end{equation*}
The last sum can be written as $\tfrac{1}{2} \sum_{n=1}^{b^a}
(n^2-n) = \tfrac{1}{2} (\sigma_2(b^a+1) - \sigma_1(b^a+1))$ and by
using (\ref{eq_sum}) again the result follows.
\end{proof}

\begin{lemma}\label{lem_doublesum2}
Let $j \ge 0$, $\nu \ge 1$, $1 \le a_\nu < \cdots < a_1 \le a $, $k
=  \kappa_{a_1-1} b^{a_1-1} + \cdots + \kappa_{a_\nu-1} b^{a_\nu-1}$
where $\kappa_{a_1-1},\ldots, \kappa_{a_\nu-1} \in \{1,\ldots,
b-1\}$. Then we have
\begin{equation*}
 \sum_{n=0}^{b^{a}-2} \sum_{m=n+1}^{b^{a}-1} (m-n)^{j} = b^a \sigma_j(b^a) - \sigma_{j+1}(b^a) \le \frac{b^{a(j+2)}}{(j+1)(j+2)}
\end{equation*}
and
\begin{eqnarray*}
\left|\sum_{n=0}^{b^{a}-2} \sum_{m=n+1}^{b^{a}-1} (m-n)^{j}
\wal_k((n\ominus m)/b^a)\right| & \le & C_{b,j} b^{(j+2) a - 2 (a_1
+ \cdots + a_{\min(\nu,\lceil j/2\rceil)})}
\end{eqnarray*}
for some constant $C_{b,j} > 0$ which is independent of $\nu$, $a$
and $a_1,\ldots, a_\nu$.
\end{lemma}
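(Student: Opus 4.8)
The first identity is routine: substituting $h=m-n$ and noting that there are exactly $b^a-h$ pairs $(n,m)$ with $m-n=h$ gives $\sum_{n<m}(m-n)^j=\sum_{h=1}^{b^a-1}(b^a-h)h^j=b^a\sigma_j(b^a)-\sigma_{j+1}(b^a)$. For the upper bound I would compare with the continuous integral. For $j\ge 1$ the map $t\mapsto t^j$ is convex, so for each pair $n<m$ Jensen's inequality gives $(m-n)^j\le\int_n^{n+1}\int_m^{m+1}|x-y|^j\rd x\rd y$, because the mean of $|x-y|$ over that cell is exactly $m-n$ and the cell lies inside $\{0\le x<y\le b^a\}$. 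Summing over all pairs and using Lemma~\ref{lem_int} rescaled to $[0,b^a]^2$ bounds the sum by $\int_{\{0\le x<y\le b^a\}}|x-y|^j\rd x\rd y=\tfrac12 b^{a(j+2)}\cdot\tfrac{2}{(j+1)(j+2)}$; the case $j=0$ is the trivial estimate $\binom{b^a}{2}\le b^{2a}/2$.

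For the weighted sum I would argue by induction on $\nu$. Writing out the definition of the Walsh function one checks that $\wal_k((n\ominus m)/b^a)=\de^{2\pi\icomp\sum_{t=1}^\nu\kappa_{a_t-1}(n_{a-a_t}-m_{a-a_t})/b}$, so the phase depends only on the digits of $n,m$ in the positions $a-a_1<\cdots<a-a_\nu$, with the leading coefficient $\kappa_{a_1-1}$ paired with the lowest of these, position $a-a_1$. The base case $\nu=0$ is the first part, and the case $j=0$ is trivial (then $\min(\nu,0)=0$ and $|T|\le b^{2a}/2$), so I assume $j\ge1$ and $\nu\ge1$. I then split each of $n,m$ into a high block (the digits above position $a-a_1$, which carries the remaining phases $\kappa_{a_2-1},\dots,\kappa_{a_\nu-1}$), the pivot digit at position $a-a_1$, and a phase-free low block (the digits below $a-a_1$), and partition the pairs $n<m$ according to whether the two high blocks differ or are equal.

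When the high blocks differ the ordering of $n,m$ is already fixed, so the pivot and low digits run over full ranges. Expanding $(m-n)^j$ multinomially and summing the two pivot digits against the pivot phase, the sum factorises over those two digits; since $\sum_{c=0}^{b-1}c^{p}\de^{\pm2\pi\icomp\kappa_{a_1-1}c/b}=0$ exactly when $p=0$, only multinomial terms in which the pivot difference occurs to a power $\ge2$ survive. Hence the high difference occurs to a power $r\le j-2$, and the high sum is a sum of the same shape with parameters $(\nu-1,r)$ at precision $a_1-1$ on the digits $a_2,\dots,a_\nu$; the induction hypothesis bounds it, the dominant term $r=j-2$ giving the target exponent with an extra factor $b^{-j}$ to spare. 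When the high blocks are equal the remaining phases collapse to $1$, there are $b^{a_1-1}$ choices of the common block, and the inner sum over pivot and low block is a single-digit ($\nu=1$) sum at precision $a-a_1+1$ with the phase on its top digit; peeling that top digit reduces it to the first part and yields the bound $C\,b^{(j+2)(a-a_1+1)-(j+1)}$, so this contribution is $\le C\,b^{(j+2)a-(j+1)a_1}$.

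The real work is the exponent bookkeeping: one must verify that both contributions are at most $C_{b,j}\,b^{(j+2)a-2(a_1+\cdots+a_{\min(\nu,\lceil j/2\rceil)})}$. For the equal-block term this amounts to $(j+1)a_1\ge 2(a_1+\cdots+a_{\min(\nu,\lceil j/2\rceil)})$, which follows at once from $a_t\le a_1$ and the arithmetic fact $2\lceil j/2\rceil\le j+1$; the unequal-block terms are handled term-by-term by the induction hypothesis and the same inequality. The two points where I expect to spend the most care are (i) confirming that the phase sum over the pivot digit annihilates every term in which the pivot appears to power $0$ or $1$ — this is exactly the mechanism that produces a gain of two powers of $b^{-a_1}$ per digit and is what pins down the $\lceil j/2\rceil$ in the statement — and (ii) checking that decreasing $r$ below $j-2$ never increases the exponent, which uses the strict ordering $a_t\le a_1-(t-1)$ for $t\ge2$.
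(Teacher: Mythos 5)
Your treatment of the first display is correct, and your route is genuinely different from the paper's: the paper expands $b^a\sigma_j(b^a)-\sigma_{j+1}(b^a)$ via the Bernoulli-number formula (\ref{eq_sum}), whereas you compare cell-by-cell with $\int_{0\le x\le y\le b^a}|x-y|^j\rd x\rd y$ using Jensen's inequality and the exact cell mean; both work, yours is cleaner. For the second display your skeleton is the paper's: the same pivot digit at position $a-a_1$, the same high/pivot/low split into the cases ``high blocks equal'' and ``high blocks differ'', and the same cancellation mechanism (the paper encodes it as $E_0=-b$, $E_1=0$) showing that only terms in which the pivot difference occurs to a power $u\ge 2$ survive. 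Your equal-block estimate, resting on $(j+1)a_1\ge 2(a_1+\cdots+a_{\min(\nu,\lceil j/2\rceil)})$, is sound.

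The genuine gap is in your point (ii), the term-by-term bookkeeping for the unequal-block terms; the dangerous terms are not $r=j-2$ but small even $r$ when $j$ is odd. Take $b\ge 3$, $j=3$, $\nu=2$ and the multinomial term $(r,u,w)=(0,3,0)$: it survives, since the pivot sum $\sum_{\tau=1}^{b-1}(b-\tau)\tau^3\bigl(\de^{-2\pi\icomp\kappa_{a_1-1}\tau/b}-\de^{2\pi\icomp\kappa_{a_1-1}\tau/b}\bigr)$ is nonzero for, e.g., $b=3$, $\kappa_{a_1-1}=1$. Your induction hypothesis applied to the high sum with exponent $r=0$ gives only $\bigl|\sum_{N<M}\wal_{k'}((N\ominus M)/b^{a_1-1})\bigr|\le C\,b^{2(a_1-1)}$, because $\lceil 0/2\rceil=0$ removes nothing. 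The term is then estimated by $b^{3(a-a_1)}\cdot b^{2(a-a_1)}\cdot C b^{2a_1-2}=C b^{5a-3a_1-2}$, while the target is $C_{b,3}b^{5a-2(a_1+a_2)}$; the required inequality $2a_2\le a_1+2$ fails for $a_2=a_1-1$, $a_1\ge 5$, and the excess factor $b^{2a_2-a_1-2}$ grows with $a_1$, so no constant can absorb it. The ordering $a_t\le a_1-(t-1)$ that you invoke cannot repair this, and the same failure recurs for every odd $j$ with even $r$ (odd pivot power). What is missing is a strictly stronger induction hypothesis: the true $r=0$ bound is $C b^{2(a_1-1)-a_2}$ (first formula of Lemma~\ref{lem_doublesum}), and the statement one must propagate has deficit $(a_1+\cdots+a_{\min(\nu,\lceil j/2\rceil)})+(a_1+\cdots+a_{\min(\nu,\lceil (j+1)/2\rceil)})$ --- an asymmetric form consistent with Lemma~\ref{lem_doublesum} at $j=0,1$ --- rather than twice the first sum. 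With that strengthening every term closes (the term above becomes $O(b^{5a-3a_1-a_2-2})\le O(b^{5a-2a_1-2a_2})$ since $a_2\le a_1$); as you have set up the induction, it does not close. (The paper's own inductive step is equally terse at exactly this point, but its base cases are taken from Lemma~\ref{lem_doublesum}, where the stronger $j=0,1$ information resides.)
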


\begin{proof}
We have $$\sum_{n=0}^{b^{a}-2} \sum_{m=n+1}^{b^{a}-1} (m-n)^{j}  =
\sum_{n = 1}^{b^a-1} (b^a-n) n^j = b^a \sigma_{j}(b^a) -
\sigma_{j+1}(b^a),$$ and by using (\ref{eq_sum}) it follows that
\begin{eqnarray*}
\lefteqn{ b^a \sigma_{j}(b^a) - \sigma_{j+1}(b^a)} \\ &=& b^{a(j+2)}
\left(\sum_{h=0}^j B_h \left(\frac{j!}{h!(j+1-h)!} -
\frac{(j+1)!}{h!(j+2-h)!}\right)b^{-ah} - B_{j+1} b^{-a(j+1)}\right)
\\ & \le & b^{a(j+2)} B_0 \frac{1}{(j+1)(j+2)},
\end{eqnarray*}
from which the first part follows as $B_0 = 1$.

For $j = 0,1$ the second part immediately follows from
Lemma~\ref{lem_doublesum}. Let now $j \ge 2$ and assume the result
holds for all $j-1,\ldots, 1,0$.

Let $m = m_{a-1} b^{a-1} + \cdots + m_0$ and $n = n_{a-1} b^{a-1} +
\cdots + n_0$. In order to obtain a bound on
\begin{equation}\label{doublesum}
\left| \sum_{n=0}^{b^{a}-2} \sum_{m=n+1}^{b^{a}-1} (m-n)^{j}
\de^{2\pi \icomp ( \kappa_{a_1-1}(n_{a-a_1} - m_{a-a_1}) + \cdots +
\kappa_{a_\nu-1}(n_{a-a_\nu} - m_{a-a_\nu}))/b} \right|
\end{equation}
we first sum over the digits $m_{a-a_1}$ and $n_{a-a_1}$.

Let $m' = m_{a-1} b^{a-1} + \cdots + m_{a-a_1+1} b^{a-a_1+1}$, $n' =
n_{a-1} b^{a-1} + \cdots + n_{a-a_1+1} b^{a-a_1+1}$, $m'' =
m_{a-a_1-1} b^{a-a_1-1} + \cdots + m_0$ and $n'' = n_{a-a_1-1}
b^{a-a_1-1} + \cdots + n_0$. We consider two cases, namely where $m'
> n'$ and where $m' = n'$.

For $m' = n'$ we either have $m_{a-a_1} > n_{a-a_1}$ or $m_{a-a_1} =
n_{a-a_1}$ and $m'' > n''$, as $m > n$. First let $m_{a-a_1} >
n_{a-a_1}$. We have $b^{a_1-1}$ choices for $m'=n'$ and the sum over
the digits $m_{a-a_1}, n_{a-a_1}$ with $m_{a-a_1} > n_{a-a_1}$ can
be written as one sum so that the part of (\ref{doublesum}) where
$m' = n'$ is given by
\begin{eqnarray*}
\lefteqn{ b^{a_1-1} \left|\sum_{n''=0}^{b^{a-a_1}-1}
\sum_{m''=0}^{b^{a-a_1}-1} \sum_{\tau=1}^{b-1} (b-\tau) (\tau
b^{a-a_1} + m''-n'')^{j} \de^{-2\pi \icomp \kappa_{a_1-1}\tau /b}
\right| } \\ & \le &  b^{a_1-1} \sum_{n''=0}^{b^{a-a_1}-1}
\sum_{m''=0}^{b^{a-a_1}-1} \sum_{\tau=1}^{b-1} (b-\tau) (\tau
b^{a-a_1} + m''-n'')^{j} \\ & \le & C''_{b,j} b^{a_1}
b^{(j+2)(a-a_1)},
\end{eqnarray*}
for some constant $C''_{b,j} > 0$ which only depends on $b$ and $j$.
Hence this part satisfies the bound. Now let $m_{a-a_1}= n_{a-a_1}$,
then we have $m'' > n''$ and hence  the part of (\ref{doublesum})
where $m' = n'$ and $m_{a-a_1}=n_{a-a_1}$ is given by
\begin{equation*}
b^{a_1} \sum_{n''=0}^{b^{a-a_1}-1} \sum_{m''=n''+1}^{b^{a-a_1}-1}
(m''-n'')^{j} \le  \frac{b^{a_1}b^{(j+2)(a-a_1)}}{(j+1)(j+2)},
\end{equation*}
where the inequality was already obtained in the first part of this
proof. Hence also this part satisfies the bound.

Now we consider the part of (\ref{doublesum}) where $m' > n'$. We
have
\begin{eqnarray}\label{summe2}
\lefteqn{\sum_{m_{a-a_1}, n_{a-a_1} = 0}^{b-1} \!\!\!\!\! (m'-n' +
b^{a-a_1} (m_{a-a_1}-n_{a-a_1}) + m'' - n'')^j \de^{2\pi \icomp
\kappa_{a_1-1}({n_{a-a_1} - m_{a-a_1}})/b } } \nonumber \\ &=& b
(m'-n'+m''-n'')^j + \sum_{\tau = 1}^{b-1} (b-\tau) [\de^{-2\pi
\icomp \kappa_{a_1-1} \tau /b } (m'-n' + \tau b^{a-a_1} + m'' -
n'')^j \nonumber \\ && + \de^{2\pi \icomp \kappa_{a_1-1} \tau /b }
(m'-n' - \tau b^{a-a_1} + m'' - n'')^j] \nonumber \\ & = &  b
(m'-n'+m''-n'')^j  + \sum_{u =0}^j {j \choose u} (m'-n' +
m''-n'')^{j-u} b^{u(a-a_1)} E_u,
\end{eqnarray}
where $$E_u = \sum_{\tau = 1}^{b-1} (b-\tau) [\de^{-2\pi \icomp
\kappa_{a_1-1} \tau /b } \tau^{u} +  \de^{2\pi \icomp \kappa_{a_1-1}
\tau /b } (-\tau)^{u}].$$ It can be checked that $E_0 = -b$ and $E_1
= 0$. Hence (\ref{summe2}) is given by $$ \sum_{u = 2}^j {j \choose
u} (m'-n' + m''-n'')^{j-u} b^{u(a-a_1)} E_u,$$ and hence the result
follows from the induction assumption or the first part.
\end{proof}

\begin{lemma}\label{lem_Iodd}
Let $k \ge 1$ be given by $k = \kappa_{a_1-1} b^{a_1-1} + \cdots +
\kappa_{a_\nu-1} b^{a_\nu-1}$ for some $\nu \ge 1$, $1 \le a_\nu <
\cdots < a_1$ and $\kappa_{a_1-1}, \ldots, \kappa_{a_\nu-1} \in
\{1,\ldots, b-1\}$. Then for $j \ge 1$ we have $$|I_j(k)| \le
\frac{\bar{C}_{b,j}}{b^{2(a_1 + \cdots + a_{\min(\nu,\lceil
j/2\rceil )})}}$$ for some constant $\bar{C}_{b,j} > 0$ which
depends only on $b$ and $j$.
\end{lemma}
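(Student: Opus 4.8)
The plan is to discretize the double integral defining $I_j(k)$ on a grid fine enough that $\wal_k$ is constant on each cell, and then to recognize the resulting Walsh-weighted double sums as exactly the objects bounded in Lemma~\ref{lem_doublesum2}, with Lemma~\ref{lem_int} supplying the cell integrals.

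First I would fix the resolution $a = a_1$ and partition $[0,1)$ into the $b^{a}$ elementary intervals $[u/b^{a},(u+1)/b^{a})$, $u=0,\ldots,b^{a}-1$. Since the top nonzero digit of $k$ sits in position $a_1-1$, the function $\wal_k$ depends only on the first $a_1$ base-$b$ digits of its argument and is therefore constant on each such interval. Writing $\wal_k(u/b^{a})$ for that constant value and using Proposition~\ref{prop1} to collapse $\overline{\wal_k(u/b^a)}\wal_k(v/b^a)=\wal_k((v\ominus u)/b^a)$, the definition (\ref{eq_Ijk}) turns into the exact identity
$$I_j(k) = \sum_{u,v=0}^{b^a-1} \wal_k((v\ominus u)/b^a) \int_{u/b^a}^{(u+1)/b^a}\int_{v/b^a}^{(v+1)/b^a}|x-y|^j\rd x\rd y.$$

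Next I would evaluate the cell integrals with Lemma~\ref{lem_int}, separating the diagonal ($u=v$) from the off-diagonal terms. The diagonal contributes $\tfrac{2}{b^{a(j+1)}(j+1)(j+2)}$, which with $a=a_1$ is at most a constant times $b^{-2(a_1+\cdots+a_{\min(\nu,\lceil j/2\rceil)})}$ because $2(a_1+\cdots+a_{\min(\nu,\lceil j/2\rceil)})\le 2\lceil j/2\rceil a_1\le (j+1)a_1$. For the off-diagonal part, the second formula of Lemma~\ref{lem_int} produces, for each $l=0,\ldots,\lfloor j/2\rfloor$, a sum $\sum_{u\neq v}\wal_k((v\ominus u)/b^a)\,|u-v|^{j-2l}$; pairing $(u,v)$ with $(v,u)$ rewrites this as twice the real part of $\sum_{n<m}(m-n)^{j-2l}\wal_k((n\ominus m)/b^a)$, whose modulus Lemma~\ref{lem_doublesum2} bounds by $2C_{b,j-2l}\,b^{(j-2l+2)a-2(a_1+\cdots+a_{\min(\nu,\lceil j/2\rceil-l)})}$, using $\lceil (j-2l)/2\rceil=\lceil j/2\rceil-l$.

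Multiplying by the prefactor $b^{-a(j+2)}$ from the cell integrals, the $l$-th term carries the weight $b^{-2la}\,b^{-2(a_1+\cdots+a_{\min(\nu,\lceil j/2\rceil-l)})}$. The crux is then the elementary exponent inequality $l a \ge a_{\min(\nu,\lceil j/2\rceil-l)+1}+\cdots+a_{\min(\nu,\lceil j/2\rceil)}$, which holds with $a=a_1$ since the right-hand side is a sum of at most $l$ indices $a_i$, each bounded by $a_1$; this shows every term is dominated by $b^{-2(a_1+\cdots+a_{\min(\nu,\lceil j/2\rceil)})}$. Summing the finitely many values of $l$ together with the diagonal term, and absorbing the $b$- and $j$-dependent constants into a single $\bar C_{b,j}$, gives the claim. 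I expect the main obstacle to be exactly this bookkeeping of exponents — verifying that the loss of up to $l$ indices in $\min(\nu,\lceil j/2\rceil-l)$ is always paid for by the factor $b^{-2la}$, across the cases $\lceil j/2\rceil\le\nu$ and $\lceil j/2\rceil>\nu$ and including the degenerate term $j-2l=0$.
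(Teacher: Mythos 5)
Your proposal is correct and follows essentially the same route as the paper's own proof: discretize $I_j(k)$ on the $b^{a_1}$-grid where $\wal_k$ is constant, evaluate the cell integrals with Lemma~\ref{lem_int}, split off the diagonal, and bound the resulting weighted double sums $\sum_{n<m}(m-n)^{j-2l}\wal_k((n\ominus m)/b^{a})$ via Lemma~\ref{lem_doublesum2}. The only difference is that you make explicit the exponent bookkeeping (the inequality $la_1 \ge a_{\min(\nu,\lceil j/2\rceil-l)+1}+\cdots+a_{\min(\nu,\lceil j/2\rceil)}$ and the diagonal estimate) that the paper leaves implicit in its closing sentence.
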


\begin{proof}
Let $k = \kappa_{a-1} b^{a-1} + \cdots + \kappa_0$, where now $a =
a_1$, $u = u_{a-1} b^{a-1} + \cdots + u_0$ and $v = v_{a-1} b^{a-1}
+ \cdots + v_0$. Then we have
\begin{eqnarray*}
I_j(k) & = & \int_0^1 \int_0^1 |x-y|^j \overline{\wal_{k}(x) }
\wal_{k}(y) \rd x \rd y \\ & = & \sum_{u = 0}^{b^a-1}
\sum_{v=0}^{b^a-1} \de^{2\pi \icomp (\kappa_0 (u_{a-1} - v_{a-1}) +
\cdots + \kappa_{a-1} (u_0 - v_0))} \int_{u/b^a}^{(u+1)/b^a} \!\!
\int_{v/b^a}^{(v+1)/b^a} \!\! |x-y|^j \rd x \rd y.
\end{eqnarray*}
For $u = v$ we have  $\de^{2\pi \icomp (\kappa_0 (u_{a-1} - v_{a-1})
+ \cdots + \kappa_{a-1} (u_0 - v_0))} = 1$. Using
Lemma~\ref{lem_int} it follows that this part in the above sum is
given by $$\frac{2}{b^{a(j+1)}(j+1)(j+2)}.$$ Hence it remains to
calculate
\begin{eqnarray*}
\lefteqn{ \sum_{u = 0}^{b^a-1} \sum_{v=0 \atop u \neq v}^{b^a-1}
\de^{2 \pi \icomp(\kappa_0 (u_{a-1} - v_{a-1}) + \cdots +
\kappa_{a-1} (u_0 - v_0))} \int_{u/b^a}^{(u+1)/b^a}
\int_{v/b^a}^{(v+1)/b^a} |x-y|^j \rd x \rd y } \\ & = & 2\sum_{u =
0}^{b^a-2} \sum_{v=u+1}^{b^a-1} \de^{2 \pi \icomp (\kappa_0 (u_{a-1}
- v_{a-1}) + \cdots + \kappa_{a-1} (u_0 - v_0))}  \frac{2
j!}{b^{a(j+2)}} \sum_{i=0}^{\lfloor j/2 \rfloor}
\frac{|u-v|^{j-2i}}{(j-2i)!(2i+2)!} \\ & = & \frac{4 j!}{b^{a(j+2)}}
\sum_{i=0}^{\lfloor j/2 \rfloor} \frac{1}{(j-2i)!(2i+2)!} \sum_{u =
0}^{b^a-2} \sum_{v=u+1}^{b^a-1} \frac{\de^{2 \pi \icomp (\kappa_0
(u_{a-1} - v_{a-1}) + \cdots + \kappa_{a-1} (u_0 -
v_0))}}{(v-u)^{2i-j}},
\end{eqnarray*}
where we used Lemma~\ref{lem_int}. The absolute value of the inner
double sum can now be bounded using Lemma~\ref{lem_doublesum2} and
hence the result follows.
\end{proof}

\begin{lemma}\label{lem_sumr}
Let $b\ge 2$ be an integer and let $\alpha > 1/2$ be a real number.
Then we have $$\sum_{k=1}^\infty r_{b,\alpha}(k) = 2
\zeta(2\alpha),$$ where $\zeta(2\alpha) = \sum_{h=1}^\infty
h^{-2\alpha}$.
\end{lemma}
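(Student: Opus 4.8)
The plan is to start from the series representation of the diagonal Walsh coefficients supplied by Lemma~\ref{lem_rgen} and then to collapse the sum over $k$ by means of Parseval's identity for the Walsh system. Concretely, Lemma~\ref{lem_rgen} gives, for every $k \ge 1$,
$$r_{b,\alpha}(k) = \sum_{h \in \integer\setminus\{0\}} \frac{|\beta_{h,k}|^2}{|h|^{2\alpha}}, \qquad \beta_{h,k} = \int_0^1 \de^{-2\pi\icomp h x}\wal_k(x)\rd x.$$
Since every term is nonnegative, I may interchange the two summations freely (Tonelli), obtaining
$$\sum_{k=1}^\infty r_{b,\alpha}(k) = \sum_{h\in\integer\setminus\{0\}} \frac{1}{|h|^{2\alpha}} \sum_{k=1}^\infty |\beta_{h,k}|^2.$$

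The heart of the argument is then to evaluate the inner sum $\sum_{k=1}^\infty |\beta_{h,k}|^2$ for each fixed $h\neq 0$. First I would observe that
$$\overline{\beta_{h,k}} = \int_0^1 \de^{2\pi\icomp h x}\overline{\wal_k(x)}\rd x = \langle \de^{2\pi\icomp h \cdot}, \wal_k\rangle$$
is precisely the $k$-th Walsh--Fourier coefficient of the function $x\mapsto \de^{2\pi\icomp h x}$, which lies in $\LL_2([0,1))$. By the completeness of the Walsh system (Proposition~\ref{prop1}, item~5) and Parseval's identity, $\sum_{k=0}^\infty |\overline{\beta_{h,k}}|^2 = \int_0^1 |\de^{2\pi\icomp h x}|^2\rd x = 1$, and since $|\overline{\beta_{h,k}}| = |\beta_{h,k}|$ this reads $\sum_{k=0}^\infty |\beta_{h,k}|^2 = 1$. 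Finally I would discard the $k=0$ term: because $\wal_0 \equiv 1$ and $h\neq 0$, we have $\beta_{h,0} = \int_0^1 \de^{-2\pi\icomp h x}\rd x = 0$, so in fact $\sum_{k=1}^\infty |\beta_{h,k}|^2 = 1$ as well.

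Substituting this value back yields
$$\sum_{k=1}^\infty r_{b,\alpha}(k) = \sum_{h\in\integer\setminus\{0\}} \frac{1}{|h|^{2\alpha}} = 2\sum_{h=1}^\infty \frac{1}{h^{2\alpha}} = 2\zeta(2\alpha),$$
which is the claim. The only points requiring care are the interchange of the summations and the application of Parseval, and both are unproblematic: the hypothesis $\alpha > 1/2$ guarantees absolute convergence of $\sum_{h\neq 0}|h|^{-2\alpha}$ while nonnegativity of the terms legitimizes reordering. Thus there is no genuine obstacle; the one step that does the real work is recognizing $\overline{\beta_{h,k}}$ as a Walsh--Fourier coefficient, after which Parseval immediately supplies the value $1$ for the inner sum.
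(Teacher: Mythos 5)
Your proof is correct and follows essentially the same route as the paper: both recognize $\beta_{h,k}$ (up to conjugation) as the $k$-th Walsh--Fourier coefficient of $x\mapsto \de^{2\pi\icomp h x}$, apply Parseval's identity for the complete Walsh system to evaluate the inner sum as $1$, and then sum $|h|^{-2\alpha}$ over $h\neq 0$. Your treatment is in fact slightly more careful than the paper's, since you explicitly justify the interchange of summation by nonnegativity and note that the $k=0$ coefficient vanishes.
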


\begin{proof}
Let $h \in \integer\setminus\{0\}$ and let  $f_h(x) =
\de^{2\pi\icomp h x}$. The Walsh coefficients $\hat{f}_h(k)$ of the
function $f_h$ are then given by $\hat{f}_h(k) = \int_0^1 f_h(x)
\overline{\wal_k(x)} \rd x$. It follows that $|\hat{f}_h(k)|^2 =
|\beta_{h,k}|^2$, where $\beta_{h,k}$ was defined in
Lemma~\ref{lem_rgen}. Using Parseval's equality we obtain
$$\sum_{k=1}^\infty |\beta_{h,k}|^2 = \sum_{k=1}^\infty
|\hat{f}_h(k)|^2 = \int_0^1 |f_h(x)|^2\rd x = \int_0^1 1 \rd x =
1.$$ Hence we have $$\sum_{k=1}^\infty r_{b,\alpha}(k) = \sum_{h\in
\integer\setminus\{0\}} \frac{1}{|h|^{2\alpha}} \sum_{k=1}^\infty
|\beta_{h,k}|^2 = \sum_{h\in \integer\setminus\{0\}}
\frac{1}{|h|^{2\alpha}} = 2\zeta(2\alpha).$$ The result follows.
\end{proof}

\begin{lemma}\label{lem_r2}
Let $k = \kappa_{a_1-1} b^{a_1-1} + \cdots +
\kappa_{a_\nu-1}b^{a_\nu-1}$ with $1 \le a_\nu < \cdots < a_1$ and
let $\kappa_{a_1-1},\ldots, \kappa_{a_\nu-1} \in \{1,\ldots, b-1\}$.
Then
\begin{equation*}
\beta_{h,\kappa_{a_1-1}b^{a_1-1} + \cdots +
\kappa_{a_\nu-1}b^{a_\nu-1}} =  \sum_{h_1,\ldots, h_\nu \in\integer,
h_l \equiv \kappa_{a_l-1}\pmod{b} \atop h = h_1 b^{a_1-1} + \cdots +
h_\nu b^{a_\nu -1}}\frac{b^{\nu}}{(2\pi\icomp)^\nu} \prod_{l=1}^\nu
\frac{1-\de^{2\pi\icomp h_l/b}}{h_l}.
\end{equation*}
\end{lemma}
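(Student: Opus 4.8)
The plan is to reduce the computation of $\beta_{h,k}$ to a product over the individual nonzero base‑$b$ digits of $k$. Since $k=\kappa_{a_1-1}b^{a_1-1}+\cdots+\kappa_{a_\nu-1}b^{a_\nu-1}$ has its digit positions $a_1-1,\dots,a_\nu-1$ pairwise distinct, no carrying occurs and the first identity of Proposition~\ref{prop1} gives the factorisation $\wal_k(x)=\prod_{l=1}^\nu \wal_{\kappa_{a_l-1}b^{a_l-1}}(x)$. Each factor $g_l(x):=\wal_{\kappa_{a_l-1}b^{a_l-1}}(x)=\de^{2\pi\icomp\kappa_{a_l-1}x_{a_l}/b}$ depends only on the $a_l$-th digit $x_{a_l}$ of $x$, and is therefore a step function of period $b^{-(a_l-1)}$ that takes the constant value $\de^{2\pi\icomp\kappa_{a_l-1}j/b}$ on the $b$ subintervals where $x_{a_l}=j$.

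First I would expand each $g_l$ in an ordinary trigonometric Fourier series. Because $g_l$ has period $b^{-(a_l-1)}$, only the frequencies $h_l b^{a_l-1}$ with $h_l\in\integer$ occur, so $g_l(x)=\sum_{h_l\in\integer}\gamma_l(h_l)\,\de^{2\pi\icomp h_l b^{a_l-1}x}$. The coefficient $\gamma_l(h_l)$ is an elementary integral: after the substitution $t=\{b^{a_l-1}x\}$ it becomes $\int_0^1 \de^{2\pi\icomp\kappa_{a_l-1}\lfloor bt\rfloor/b}\,\de^{-2\pi\icomp h_l t}\rd t$, which one evaluates over the intervals $[j/b,(j+1)/b)$ for $j=0,\dots,b-1$. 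The resulting geometric sum in $j$ forces $h_l\equiv\kappa_{a_l-1}\pmod{b}$ (otherwise $\gamma_l(h_l)=0$), and on this residue class a direct evaluation gives $\gamma_l(h_l)=\frac{b}{2\pi\icomp h_l}\bigl(1-\de^{-2\pi\icomp h_l/b}\bigr)$; here $h_l\neq0$ automatically, since $\kappa_{a_l-1}\in\{1,\dots,b-1\}$. This is exactly the per‑factor term displayed in the statement (the phase in the numerator coming out as $\de^{-2\pi\icomp h_l/b}$ rather than $\de^{2\pi\icomp h_l/b}$, a harmless discrepancy since only $|\beta_{h,k}|$ is used in the sequel).

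Finally I would multiply the $\nu$ series and read off the coefficient of $\de^{2\pi\icomp hx}$, so that $\beta_{h,k}=\int_0^1\prod_{l=1}^\nu g_l(x)\,\de^{-2\pi\icomp hx}\rd x$ equals the multiple convolution $\sum_{h_1 b^{a_1-1}+\cdots+h_\nu b^{a_\nu-1}=h}\prod_{l=1}^\nu\gamma_l(h_l)$; inserting the values of $\gamma_l$ together with the congruences $h_l\equiv\kappa_{a_l-1}\pmod{b}$ yields the claimed product formula. The one point requiring care, and the main obstacle, is the justification of this term‑by‑term multiplication: the coefficients $\gamma_l(h_l)$ decay only like $1/|h_l|$, so each individual Fourier series converges merely in $L^2$ and only conditionally, and the series may not be multiplied naively. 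I would therefore argue through $L^2$ theory: each $g_l$ is bounded, hence lies in $L^2([0,1))$, and the Fourier coefficient of a product of two $L^2$ functions is the convolution of their coefficient sequences, this convolution converging absolutely by the Cauchy–Schwarz inequality. Since a product of bounded functions is again bounded, hence again in $L^2$, this applies inductively to the $\nu$‑fold product, legitimately collapsing the frequency constraint $\sum_{l=1}^\nu h_l b^{a_l-1}=h$ and completing the proof.
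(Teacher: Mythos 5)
Your proposal follows essentially the same route as the paper's proof: factor $\wal_k$ into its single-digit factors $\wal_{\kappa_{a_l-1}b^{a_l-1}}$, compute each factor's Fourier coefficients (obtaining the congruence $h_l \equiv \kappa_{a_l-1} \pmod{b}$ and the value $b(1-\de^{-2\pi\icomp h_l/b})/(2\pi\icomp h_l)$), and then identify the coefficients in the product of the resulting series. Your side remarks are also apt: the paper's own intermediate formula indeed carries $\de^{-2\pi\icomp h_l/b}$ while its final display shows $\de^{2\pi\icomp h_l/b}$ (the harmless conjugation you noticed), and your $L^2$/Cauchy--Schwarz justification of the term-by-term multiplication is in fact more careful than the paper's argument, which multiplies the infinite series formally and simply compares coefficients.
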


\begin{proof}
First we consider $k = \kappa_{a-1} b^{a-1}$ with $\kappa_{a-1} \in
\{1,\ldots, b-1\}$. Let $x = \frac{x_1}{b} + \frac{x_2}{b^2} +
\cdots$, then we have $\wal_k(x) = \de^{2\pi \icomp \kappa_{a-1}
x_{a}/b}$. Note that $\wal_k(x)$ is constant in the intervals
$[u/b^{a}, (u+1)/b^{a})$ for $0 \le u < b^{a}$. Let $u = u_{a-1}
b^{a-1} + \cdots + u_0$. Then for any $h \in \integer\setminus\{0\}$
we have
\begin{eqnarray*}
\beta_{h,k}
&=&  \sum_{u=0}^{b^{a}-1} \de^{2 \pi \icomp \kappa_{a-1} u_{0}/b} \int_{u/b^{a}}^{(u+1)/b^{a}} \de^{-2\pi \icomp h x} \rd x \\
&=&  \sum_{u=0}^{b^{a}-1} \de^{2 \pi \icomp \kappa_{a-1} u_{0}/b}
\frac{\de^{-2\pi \icomp h (u+1)/b^{a}} - \de^{-2\pi \icomp h u/b^{a}}}{-2\pi \icomp h} \\
&=& \frac{1-\de^{-2\pi \icomp h/b^{a}}}{2\pi h \icomp}
\sum_{u_0=0}^{b-1} \cdots \sum_{u_{a-1} =0}^{b-1}
\de^{2 \pi \icomp \kappa_{a-1} u_{0}/b} \de^{-2\pi \icomp h (u_{a-1}/b + \cdots + u_{0}/b^{a})} \\
&=& \frac{1-\de^{-2\pi \icomp h/b^{a}}}{2\pi h \icomp}
\sum_{u_0=0}^{b-1} \de^{2\pi \icomp u_0 (\kappa_{a-1}/b - h/b^{a})}
\sum_{u_1=0}^{b-1} \de^{-2\pi \icomp u_1 h/b^{a-1}} \cdots \!\!\!
\sum_{u_{a-1} =0}^{b-1} \! \de^{-2 \pi \icomp u_{a-1} h/b}.
\end{eqnarray*}
Let now $h \in \integer\setminus\{0\}$ and let $h = h_{c-1} b^{c-1}
+ \cdots + h_0$ and set $h_c = h_{c+1} = \cdots = 0$. If $h > 0$ we
assume that $h_i \in \{0,\ldots, b-1\}$ and if $h < 0$ we assume
that $h_i \in \{-b+1,\ldots, 0\}$ for all $i \ge 0$. If $h_0 \neq 0$
then $\sum_{u_{a-1} =0}^{b-1} \de^{-2 \pi \icomp u_{a-1} h/b} = 0$
and hence $\beta_{h,\kappa_{a-1} b^{a-1}} = 0$. If $h_0 =0$ then
$\sum_{u_{a-1} =0}^{b-1} \de^{-2 \pi \icomp u_{a-1} h/b} = b$. In
general, if for an $0 \le i < a-1$ we have $h_i \neq 0$ then
$\beta_{h,\kappa_{a-1} b^{a-1}} = 0$. Further, if $h_i = 0$ for $0
\le i \le a-1$ then we also have $\beta_{h,\kappa_{a-1} b^{a-1}} =
0$. Hence, in order to obtain $\beta_{h,\kappa_{a-1} b^{a-1}} \neq
0$ we must have $h_0 = \cdots = h_{a-2} = 0$ and $\kappa_{a-1} -
h_{a-1} \equiv 0 \pmod{b}$. In this case we have $$\beta_{h,
\kappa_{a-1}b^{a-1}} = \frac{1-\de^{-2\pi \icomp h_{a-1}/b}}{2\pi h
\icomp} b^{a},$$  where $h = h_{a-1} b^{a-1} + h_{a}b^{a} + \cdots$
with $h_{a-1} \equiv \kappa_{a-1} \pmod{b}$. We can also write
$$\beta_{h b^{a-1},\kappa_{a-1}b^{a-1}} = \frac{b(1-\de^{-2\pi
\icomp h/b})}{2\pi \icomp h},$$ with $h \in \integer$ such that $h
\equiv \kappa_{a-1} \pmod{b}$.

We can interpret $\beta_{h,k} = \int_0^1 \de^{-2\pi \icomp h x}
\wal_k(x) \rd x$ as the Fourier coefficients of the $k$-th Walsh
function, hence it follows that $$\wal_k(x) = \sum_{h \in \integer}
\beta_{h,k} \de^{2\pi \icomp h x}.$$

Let now $k = \kappa_{a_1-1} b^{a_1-1} + \cdots + \kappa_{a_\nu-1}
b^{a_\nu-1}$ for some $1 \le a_\nu < \cdots < a_1$. Then we have
\begin{eqnarray*}
\lefteqn{\wal_{\kappa_{a_1-1}b^{a_1-1} + \cdots + \kappa_{a_\nu-1}b^{a_\nu-1}}(x) } \\ &=& \wal_{\kappa_{a_1-1}b^{a_1-1}}(x) \cdots \wal_{\kappa_{a_\nu-1}b^{a_\nu-1}}(x) \\
&=& \sum_{h_1 \in \integer} \beta_{h_1,\kappa_{a_1-1}b^{a_1-1}} \de^{2\pi \icomp h_1 x} \cdots \sum_{h_\nu \in \integer} \beta_{h_\nu,\kappa_{a_\nu-1}b^{a_\nu-1}} \de^{2\pi \icomp h_\nu x} \\
&=& \sum_{h_1,\ldots, h_\nu \in \integer}
\beta_{h_1,\kappa_{a_1-1}b^{a_1-1}} \cdots  \beta_{h_\nu,
\kappa_{a_\nu-1}b^{a_\nu-1}} \de^{2\pi \icomp (h_1 + \cdots + h_\nu)
x}.
\end{eqnarray*}
On the other hand we have $$\wal_{\kappa_{a_1-1}b^{a_1-1} + \cdots +
\kappa_{a_\nu-1}b^{a_\nu-1}}(x) = \sum_{h \in
\integer\setminus\{0\}} \beta_{h,\kappa_{a_1-1}b^{a_1-1} + \cdots +
\kappa_{a_\nu-1}b^{a_\nu-1}} \de^{2\pi \icomp h x}.$$ On comparing
the last two equations we obtain that
$\beta_{h,\kappa_{a_1-1}b^{a_1-1} + \cdots +
\kappa_{a_\nu-1}b^{a_\nu-1}} = 0$ if either $b^{a_1-1}
\not\hspace{-0.1cm}| h$ or $h \not\equiv \kappa_{a_1-1}
\pmod{b^{a_1-1}}$. Now let $h \in \integer$ such that $b^{a_1-1} |
h$ and $h \equiv \kappa_{a_1-1} \pmod{b^{a_1-1}}$. Then we have
\begin{eqnarray*}
\lefteqn{ \beta_{h,\kappa_{a_1-1}b^{a_1-1} + \cdots +
\kappa_{a_\nu-1}b^{a_\nu-1}} } \\ &=& \sum_{h_1,\ldots, h_\nu
\in\integer, h_l \equiv \kappa_{a_l-1}\pmod{b} \atop h = h_1
b^{a_1-1} + \cdots + h_\nu b^{a_\nu -1}} \beta_{h_1
b^{a_1-1},\kappa_{a_1-1}b^{a_1-1}} \cdots  \beta_{h_\nu b^{a_\nu-1},
\kappa_{a_\nu-1}b^{a_\nu-1}}  \\ &=&  \sum_{h_1,\ldots, h_\nu
\in\integer, h_l \equiv \kappa_{a_l-1}\pmod{b} \atop h = h_1
b^{a_1-1} + \cdots + h_\nu b^{a_\nu
-1}}\frac{b^{\nu}}{(2\pi\icomp)^\nu} \prod_{l=1}^\nu
\frac{1-\de^{2\pi\icomp h_l/b}}{h_l}
\end{eqnarray*}
and the result follows.
\end{proof}

\begin{lemma}\label{lem_rbm}
For $k \ge 1$, $b \ge 2$, $m \ge 1$ and $\alpha > 1/2$ we have
$$r_{b,\alpha}(k b^m) = b^{-2\alpha m} r_{b,\alpha}(k).$$
\end{lemma}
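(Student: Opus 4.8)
The plan is to reduce the identity to the series representation of the diagonal Walsh coefficients furnished by Lemma~\ref{lem_rgen}. Since $r_{b,\alpha}(k)=r_{b,\alpha}(k,k)$, that lemma gives
\begin{equation*}
r_{b,\alpha}(k)=\sum_{h\in\integer\setminus\{0\}}\frac{|\beta_{h,k}|^2}{|h|^{2\alpha}},\qquad \beta_{h,k}=\int_0^1\de^{-2\pi\icomp h x}\wal_k(x)\rd x .
\end{equation*}
Thus everything comes down to understanding how $\beta_{h,k}$ changes when $k$ is replaced by $kb^m$. The key structural fact I would establish first is the digit-shift relation $\wal_{kb^m}(x)=\wal_k(\{b^m x\})$ for almost every $x\in[0,1)$. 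This follows directly from the definition of $\wal_k$: multiplying $k$ by $b^m$ shifts each base-$b$ digit of $k$ up by $m$ positions, so in the exponent $x_1\kappa_0+x_2\kappa_1+\cdots$ the digit $\kappa_{j-1}$ gets paired with $x_{j+m}$ instead of $x_j$, which is exactly the exponent produced by evaluating $\wal_k$ at $\{b^m x\}=x_{m+1}b^{-1}+x_{m+2}b^{-2}+\cdots$.

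Next I would compute $\beta_{h,kb^m}$ using this relation. Partitioning $[0,1)=\bigcup_{\ell=0}^{b^m-1}[\ell b^{-m},(\ell+1)b^{-m})$ and substituting $x=(\ell+t)b^{-m}$ on the $\ell$-th piece (so that $\{b^m x\}=t$) yields
\begin{equation*}
\beta_{h,kb^m}=b^{-m}\left(\sum_{\ell=0}^{b^m-1}\de^{-2\pi\icomp h\ell/b^m}\right)\int_0^1\de^{-2\pi\icomp h t/b^m}\wal_k(t)\rd t .
\end{equation*}
The geometric sum equals $b^m$ when $b^m\mid h$ and vanishes otherwise. Hence $\beta_{h,kb^m}=0$ unless $b^m\mid h$, while for $h=b^m h'$ the remaining integral is precisely $\beta_{h',k}$, giving the clean transformation $\beta_{b^m h',kb^m}=\beta_{h',k}$.

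Finally I would substitute this into the series for $r_{b,\alpha}(kb^m)$: only the terms with $h=b^m h'$, $h'\in\integer\setminus\{0\}$, survive, and for these $|h|^{2\alpha}=b^{2\alpha m}|h'|^{2\alpha}$, so
\begin{equation*}
r_{b,\alpha}(kb^m)=\sum_{h'\in\integer\setminus\{0\}}\frac{|\beta_{h',k}|^2}{b^{2\alpha m}|h'|^{2\alpha}}=b^{-2\alpha m}\,r_{b,\alpha}(k),
\end{equation*}
which is the assertion. I do not expect a genuine obstacle here; the only delicate point is the bookkeeping in the substitution step (tracking the $b^{-m}$ Jacobian against the geometric sum, and noting that the boundary set where $\{b^m x\}$ is ambiguous has measure zero and is harmless inside the integral). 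As an alternative route, one could instead invoke the explicit formula of Lemma~\ref{lem_r2}: replacing $k$ by $kb^m$ sends each digit position $a_l$ to $a_l+m$, forcing $h=b^m h'$ in the admissibility constraint while leaving the product $\prod_{l}(1-\de^{2\pi\icomp h_l/b})/h_l$ unchanged, again yielding $\beta_{b^m h',kb^m}=\beta_{h',k}$ and the same conclusion.
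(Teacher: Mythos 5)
Your proposal is correct, and your main argument takes a genuinely different route from the paper. The paper derives the two key facts --- that $\beta_{h,kb^m}=0$ unless $b^m \mid h$, and that $\beta_{hb^m,kb^m}=\beta_{h,k}$ --- as consequences of the explicit digit-expansion formula of Lemma~\ref{lem_r2} (the lemma immediately preceding Lemma~\ref{lem_rbm}): multiplying $k$ by $b^m$ shifts every digit position $a_l$ to $a_l+m$, which in that formula forces $h$ to be divisible by $b^m$ while leaving the product $\prod_l (1-\de^{2\pi\icomp h_l/b})/h_l$ untouched; the concluding series manipulation via Lemma~\ref{lem_rgen} is then identical to yours. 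You instead obtain the same two facts directly from the self-similarity relation $\wal_{kb^m}(x)=\wal_k(\{b^m x\})$ and a change of variables on the partition $[0,1)=\bigcup_{\ell=0}^{b^m-1}[\ell b^{-m},(\ell+1)b^{-m})$, with the geometric sum $\sum_{\ell=0}^{b^m-1}\de^{-2\pi\icomp h\ell/b^m}$ supplying the divisibility condition. This is more elementary and self-contained: it uses only the definition of Walsh functions and bypasses Lemma~\ref{lem_r2} entirely (which in the paper serves essentially no purpose other than feeding this proof), at the cost of the mild measure-theoretic care you correctly flag about $b$-adic rationals. The alternative route you sketch at the end --- invoking Lemma~\ref{lem_r2} and tracking the digit shift --- is precisely the paper's own proof.
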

\begin{proof}
First note that $\beta_{h,\kappa_{a_1-1}b^{m+a_1-1} + \cdots +
\kappa_{a_\nu-1}b^{m+a_\nu-1}} = 0$ if $b^m \not| h$. Further it
follows from the previous lemma that $$\beta_{h
b^m,\kappa_{a_1-1}b^{m+a_1-1} + \cdots +
\kappa_{a_\nu-1}b^{m+a_\nu-1}} = \beta_{h,\kappa_{a_1-1}b^{a_1-1} +
\cdots + \kappa_{a_\nu-1}b^{a_\nu-1}}$$ and hence by
Lemma~\ref{lem_rgen} we have $$r_{b,\alpha}(k b^m) =  \sum_{h \in
\integer\setminus\{0\}} \frac{|\beta_{h
b^m,kb^m}|^2}{|hb^m|^{2\alpha}} = b^{-2\alpha m} \sum_{h \in
\integer\setminus\{0\}} \frac{|\beta_{h,k}|^2}{|h|^{2\alpha}} =
b^{-2\alpha m} r_{b,\alpha}(k).$$ The result follows .
\end{proof}

\section*{Acknowledgement}
The support of the Australian Research Council under its Centre of
Excellence Program is gratefully acknowledged.

\end{document}